\def\bbR{\mathrm{I\!R}}
\def\dz{\mathcal{D}}
\def\ez{\mathcal{E}}
\def\hz{\mathcal{H}}
\def\zz{\mathcal{Z}}
\def\rp{\bbR\hn^n}
\def\rtr{\bbR\nh^3}
\def\px{p}
\def\lx{\ell}
\def\hyp{\hskip.5pt\vbox
{\hbox{\vrule width2.5ptheight0.5ptdepth0pt}\vskip2pt}\hskip.5pt}
\def\hs{\hskip.7pt}
\def\hh{\hskip.4pt}
\def\nh{\hskip-.7pt}
\def\nnh{\hskip-1.5pt}
\def\hn{\hskip-.4pt}
\def\w{^{\phantom i}}
\def\txm{{T\hskip-2.9pt_x\w\hn M}}
\def\my{\mu}
\def\sy{\sigma}
\def\nav{\nabla\hskip-2.6pt_v\w\hs}
\def\naw{\nabla\hskip-2.6pt_w\w\hs}
\def\bbC{{\mathchoice {\setbox0=\hbox{$\displaystyle\mathrm{C}$}
\hbox{\hbox to0pt{\kern0.4\wd0\vrule height0.9\ht0\hss}\box0}} 
{\setbox0=\hbox{$\textstyle\mathrm{C}$}\hbox{\hbox 
to0pt{\kern0.4\wd0\vrule height0.9\ht0\hss}\box0}} 
{\setbox0=\hbox{$\scriptstyle\mathrm{C}$}\hbox{\hbox 
to0pt{\kern0.4\wd0\vrule height0.9\ht0\hss}\box0}} 
{\setbox0=\hbox{$\scriptscriptstyle\mathrm{C}$}\hbox{\hbox 
to0pt{\kern0.4\wd0\vrule height0.9\ht0\hss}\box0}}}} 
\def\dimr{\dim_{\hskip.4pt\bbR\hskip-1.2pt}^{\phantom i}}
\def\ig{parallel}
\def\Ig{Parallel}
\def\igy{parallelism}
\def\Igy{Parallelism}
\newtheorem{theorem}{Theorem}[section]
\newtheorem{lemma}[theorem]{Lemma}
\theoremstyle{definition}
\theoremstyle{remark}
\newtheorem{remark}[theorem]{Remark}
\numberwithin{equation}{section}
\begin{document}

\title[Parallel differential forms]{\Ig\ differential forms of codegree two,\\
and three-forms in dimension six}
\author[A. Derdzinski]{Andrzej Derdzinski}
\address[Andrzej Derdzinski]{Department of Mathematics\\
The Ohio State University\hskip-1pt\\
231 \hbox{W\hskip-1pt.} 18th Avenue\\
Columbus, OH 43210, USA}
\email{andrzej@math.ohio-state.edu}
\author[P. Piccione]{Paolo Piccione}
\address[Paolo Piccione]{Department of Mathematics\\
School of Sciences\\
Great Bay University\\
Dongguan, Guangdong 523000, China}
\address{{\it Permanent address\hh}: 
Departamento de Matem\'atica\\
Instituto de Matem\'atica e Esta\-t\'\i s\-ti\-ca\\
Uni\-ver\-si\-da\-de de S\~ao Paulo\\
Rua do Mat\~ao 1010, CEP 05508-900\\
S\~ao Paulo, SP, Brazil}
\email{paolo.piccione@usp.br}
\author[I.\ Terek]{Ivo Terek} 
\address[Ivo Terek]{Department of Mathematics and Statistics\\
Williams College\\
Wil\-li\-ams\-town, MA 01267, USA}
\email{it3@williams.edu}

\thanks{The\hs\ first\hs\ two\hs\ authors'\hs\ research\hs\ supported\hs\
in\hs\ part\hs\ by\hs\ FAPESP\hs\ grants\hs\ 2022/16097-2\\
and\hn\ 2022/14254-3,\nh\ the\hn\ first\hn\ author's\hn\ also\hn\ by\hn\ 
a\hn\ FAPESP\hn-\hs OSU\hn\ 2015\hn\ Regular\hn\ Research\hn\ Award\\
(FAPESP grant: 2015/50265-6).}

\begin{abstract}For a differential form on a manifold, having constant
components in suitable local coordinates trivially implies being parallel
relative to a tor\-sion-free connection, and the converse implication is
known to be true for $p$-forms in dimension $n$ when $p=0,1,2,n-1,n$. We prove
the converse for $(n-2)$-forms, and for 3-forms when $n=6$, while pointing out 
that it fails to hold for Cartan 3-forms on all simple Lie groups of 
dimensions $n\ge8$ as well as for $(n,p)=(7,3)$ and $(n,\px)=(8,4)$, where
the 3-forms and 4-forms arise in compact simply connected Riemannian 
manifolds with exceptional holonomy groups. We also provide geometric
characterizations of 3-forms in dimension six and $(n-2)$-forms in dimension
$n$ having the con\-stant-com\-po\-nents property mentioned above, and describe
examples illustrating the fact that various parts of these 
geometric characterizations are logically independent.
\end{abstract}

\makeatletter
\@namedef{subjclassname@2020}{\textup{2020} Mathematics Subject Classification}
\makeatother

\subjclass[2020]{Primary 53B05
\and
Secondary 53A55}

\keywords{Parallel differential form, locally constant differential form}

\maketitle

\setcounter{section}{0}
\setcounter{theorem}{0}
\setcounter{prop}{0}
\renewcommand{\thetheorem}{\Alph{theorem}}
\renewcommand{\theprop}{\Alph{prop}}
\section{Introduction}
\setcounter{equation}{0}
Manifolds (by definition connected) 
and tensor fields are always assumed to be smooth. 
A differential $\,\px\hs$-form $\,\my\,$ 
on an $\,n$-di\-men\-sion\-al manifold $\,M\,$ may be called
\begin{enumerate}
\item[(i)] {\it algebraically constant\/} if it has the same algebraic type
at all points,
\item[(ii)]{\it locally constant\/} when it has constant components in suitable
local coordinates around each point,
\item[(iii)] {\it \ig\/} if $\,\nabla\nnh\my=0\,$ for some
tor\-sion-free connection $\,\nabla\nnh$.
\end{enumerate}
In \cite{derdzinski-piccione-terek} our `parallel' forms are 
referred to as {\it in\-te\-gra\-ble}, while
\cite{munoz-masque-pozo-coronado-rosado-maria} uses for (ii) the term
{\it forms with constant coefficients}. With the aid of partitions of unity
one easily sees that $\,\nabla\hs$ in (iii) only needs to exist locally. Thus,
\begin{equation}\label{ifi}
\mathrm{(iii)\ follows\ from\ (ii),\ which\ is\ its\ local\ version\ with\ 
a\ flat\ connection,}
\end{equation}
if we agree to treat the items (i)--(iii) as conditions rather than
definitions, Including (\ref{ifi}), we
then have the well-known implications
\begin{equation}\label{imp}
\mathrm{(ii)}\implies\mathrm{(iii)}\implies\mathrm{(i)}\mathrm{,\ 
\ and\ \ (iii)}\implies\,(d\my=0\hh)\hh,
\end{equation}
cf.\  \cite[Prop.\,2.1]{munoz-masque-pozo-coronado-rosado-maria}, 
\cite[formula (1.1)]{derdzinski-piccione-terek}, 
as well as the equivalences
\begin{equation}\label{eqv}
\mathrm{when\
}\,\px\in\{0,1,2,n-1,n\}\mathrm{,\ \ (ii)}\iff\mathrm{(iii)}\iff\,(\my\,
\mathrm{\ is\ closed)}.
\end{equation}
See \cite[Examples\,1.5\hs-\hn1.8]{munoz-masque-pozo-coronado-rosado-maria}, 
\cite[Prop.\,D]{derdzinski-piccione-terek}. As pointed out by
Mu\~noz Masqu\'e et al.\ in 
\cite{munoz-masque-pozo-coronado-rosado-maria}, the questions of this kind
raised, but not answered, by (\ref{eqv}) start from $\,(n,\px)=(5,3)$.
Regarding
such questions, let us note here that the converse of the last implication in
(\ref{imp}) is {\it false except in the case\/} (\ref{eqv}): 
\cite[formula (1.7)]{derdzinski-piccione-terek} 
provides simple counterexamples for all $\,(n,\px)\,$ with $\,n\ge5\,$ and
$\,3\le\px\le n-2$. 

Given an algebraically constant differential form $\,\my\,$ on a manifold
and a distribution $\,\ez\hs$ naturally associated with $\,\my$,
as $\,\ez\hs$ is obviously $\,\nabla\nh$-par\-al\-lel when $\,\nabla\nnh\my=0$,
\begin{equation}\label{iii}
\mathrm{in\-te\-gra\-bi\-li\-ty\ of\ }\,\ez\hs\mathrm{\ follows\ if\
}\,\my\,\mathrm{\ happens\ to\ be\ \ig.}
\end{equation}
Our first main result, Theorem~\ref{ififf}, states that the equivalence 
(ii)$\iff$(iii) in (\ref{eqv}) remains true also when $\,\px=n-2$, as well
as for $\,(n,\px)=(6,3)$.

In general, however, (ii) implies (iii), but not conversely, with
specific counterexamples (Theorem~\ref{injec}): some related to exceptional
holonomy groups \cite{joyce}, with $\,(n,\px)=(7,3)\,$ and $\,(n,\px)=(8,4)$,
others provided by the Car\-tan 
$\,3$-forms on all simple Lie groups of dimensions $\,n\ge8$. The latter 
can be further generalized (Remark~\ref{smplg}) to all sem\-i\-simple Lie
groups without normal Lie sub\-groups of dimensions $\,3\,$ or $\,6$.

The next two results, Theorems~\ref{tfsix} and \ref{lcint}, provide geometric
characterizations of the case where $\,\my$, a $\,3$-form in dimension $\,6\,$ 
or an $\,(n-2)$-form in dimension $\,n$, 
is \ig\ (or, equivalently, locally constant, cf.\ Theorem~\ref{ififf}). The
characterizations involve 
closedness of $\,\my\,$ and, for $\,(n-2)$-forms, of a certain $\,2$-form
arising from $\,\my$, as well as in\-te\-gra\-bi\-li\-ty of an
al\-most-com\-plex structure (in the case of $\,3$-forms) and of specific
distributions naturally associated with $\,\my$.

In Sect.\,\ref{li}--\ref{dc} we describe examples showing that there are no 
redundant parts in the characterizations just mentioned. One source of these 
examples is Theorem~\ref{duncl}, dealing with the natural duality between 
nondegenerate differential \hbox{$2$-forms} $\,\sy\hs$ in (necessarily even)
dimension $\,n\,$ and
$\,(n-2)$-forms $\,\my\,$ which are in\-di\-vis\-i\-ble at each point. It 
states that,
even though $\,d\my=0\,$ whenever $\,d\sy\hn=0$, the converse implication,
obvious when $\,n\le4$, fails in all even dimensions $\,n\ge6$.

\renewcommand{\thetheorem}{\thesection.\arabic{theorem}}
\renewcommand{\theprop}{\thesection.\arabic{prop}}
\section{Preliminaries}\label{pr}
\setcounter{equation}{0}
Our convention about the exterior product of $\,1$-forms $\,\xi\hh^i$ and
vectors $\,v\nnh_j\w$ is 
\begin{equation}\label{cvt}
[\hh\xi^1\nnh\nh\wedge\ldots\wedge\hs\xi\hh^\px](v_1\w,\dots,v_\px)\,
=\,[v_1\w\wedge\ldots\wedge v_\px](\xi^1\nnh,\dots,\hs\xi\hh^\px)\,
=\,\det[\hh\xi\hh^i(v\nnh_j\w)]\hh.
\end{equation}
Given a differential $\,\px\hs$-form $\,\zeta$, with $\,\widehat{\,\,}\,$ 
meaning `delete' one has the following well-known
expression for the exterior derivative in terms of Lie brackets:
\begin{equation}\label{dbr}
\begin{array}{rrl}
[d\hh\zeta](v_0\w,\dots,v\hn_\px\w)\hskip-4pt&=&
\hskip-5pt\textstyle{\sum}_i\w(-\nnh1)^i
d_{v_i}\w[\hh\zeta(v_0\w,\dots,\widehat v_i\w,\dots,v\hn_\px\w)]\\
\hskip-4pt&
+&\hskip-5pt\textstyle{\sum}_{i,j}\w(-\nnh1)^{i+j}\zeta([v_i\w,v_j\w],
v_0\w,\dots,\widehat v_i\w,\dots,\widehat v_j\w,\dots,v\hn_\px\w)\hh,
\end{array}
\end{equation}
$v_0\w,\dots,v\hn_\px\w$ being any tangent vector fields, the summation ranges
$\,0\le i\le\px\,$ and and $\,0\le i<j\le\px$. See, e.g.,
\cite[formula (1.5a)]{besse}.

We call a vector field $\,w\,$ on a manifold {\it pro\-ject\-a\-ble along 
an in\-te\-gra\-ble distribution\/} $\,\dz\,$ if, locally, it is 
pro\-ject\-a\-ble onto local leaf spaces of $\,\dz$. As is easily seen in
coordinates such that some coordinate vector fields span $\,\dz$, 
\begin{equation}\label{prj}
\begin{array}{l}
w\,\,\mathrm{\hs\ is\hs\ pro\-ject\-a\-ble\hs\ along\hs\
}\,\,\dz\,\,\mathrm{\hs\ if\hs\ and\hs\ only\
if,\hs\ for\hs\ every\hs\ sec}\hyp\\
\mathrm{tion\ }\,v\,\mathrm{\ of\ }\,\dz\,\mathrm{\ the\ Lie\ 
bracket\ }\,\nh[w,\nh v]\,\mathrm{\ is\ also\ a\ section\ of\ 
}\,\dz.
\end{array}
\end{equation}
One says that a $\,(0,r)\,$ tensor field $\,\xi\,$ on a manifold $\,M\,$ 
{\it annihilates\/} a distribution $\,\dz\,$ (or, is {\it pro\-ject\-a\-ble 
along\/} $\,\dz$) if $\,\xi(v_1\w,\dots,v\hn_r\w)=0\,$ whenever one of the 
vector fields $\,v_1\w,\dots,v\hn_r\w$ is a section of $\,\dz\,$ or, 
respectively, if $\,\dz\,$ is in\-te\-gra\-ble, and $\,\xi$, locally, equals
the pull\-back to $\,M\,$ of a $\,(0,r)\,$ tensor field on a local leaf space
of $\,\dz$.
\begin{lemma}\label{annih}Let\/ $\,\dz\,$ be a $\,\nabla\nnh$-par\-al\-lel 
distribution on a manifold $\,M\,$ with a tor\-sion-free connection
$\,\nabla\nh$. If a $\,(0,r)\,$ tensor field\/ $\,\xi\nh$ on\/ $\,M\hs$ 
annihilates\/ $\,\dz$, then so does\/ $\,\nav\xi$, for any
vector field $\,v$, while\/ $\,\nav\xi=0\,$ when, in addition, $\,\xi\nh$ is
pro\-ject\-a\-ble along\/ $\,\dz$ and\/ $\,v\hs$ is a section of\/ $\,\dz$.
\end{lemma}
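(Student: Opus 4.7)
The plan is to start from the Leibniz expansion
\[
(\nav\xi)(v_1,\ldots,v_r)\,=\,v\hs[\xi(v_1,\ldots,v_r)]
\,-\,\textstyle{\sum}_{i=1}^r\hs\xi(v_1,\ldots,\nav v_i,\ldots,v_r)
\]
and to exploit two immediate consequences of the hypotheses: $\nabla$-par\-al\-lel\-ism
of $\,\dz\,$ means $\,\nav w\,$ is a section of $\,\dz\,$ whenever $\,w\,$ is,
and annihilation of $\,\dz\,$ by $\,\xi\,$ forces $\,\xi(v_1,\ldots,v_r)=0\,$
as soon as one argument lies in $\,\dz$.

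For the first assertion, fix arguments $\,v_1,\ldots,v_r\,$ with some
$\,v_j\,$ a section of $\,\dz$. Then $\,\xi(v_1,\ldots,v_r)=0$, making the
first term vanish; in the $\,i=j\,$ summand the vector $\,\nav v_j\,$ again
lies in $\,\dz\,$ by par\-al\-lel\-ism, while for $\,i\ne j\,$ the argument
$\,v_j\,$ itself is still present, so in either case $\,\xi\,$ evaluates to
zero.

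For the second assertion, assume in addition that $\,\xi\,$ is
pro\-ject\-a\-ble along $\,\dz\,$ and $\,v\in\dz$. The tor\-sion-free identity
$\,\nav v_i=[v,v_i]+\nabla_{v_i}v\,$ rewrites the expansion above as
\[
(\nav\xi)(v_1,\ldots,v_r)\,=\,(\Li_v\xi)(v_1,\ldots,v_r)
\,-\,\textstyle{\sum}_{i=1}^r\hs\xi(v_1,\ldots,\nabla_{v_i}v,\ldots,v_r).
\]
Each $\,\nabla_{v_i}v\,$ is a section of $\,\dz\,$ by par\-al\-lel\-ism
applied to $\,v$, so the summands on the right vanish by annihilation. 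To see
that $\,\Li_v\xi=0$, evaluate on pro\-ject\-a\-ble test fields
$\,v_1,\ldots,v_r\,$ (e.g., coordinate vector fields adapted to $\,\dz$):
by (\ref{prj}) each bracket $\,[v,v_i]\,$ then lies in $\,\dz$, so the
corresponding summands in $\,\Li_v\xi\,$ vanish by annihilation, and
pro\-ject\-a\-bi\-li\-ty of $\,\xi\,$ makes $\,\xi(v_1,\ldots,v_r)\,$ the
pull\-back of a function from a local leaf space, hence constant along
leaves, which is then killed by $\,v\in\dz$.

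The main subtlety is keeping the two hypotheses straight: annihilation alone,
combined with par\-al\-lel\-ism, suffices for the first claim, whereas the
sharper vanishing of $\,\nav\xi\,$ in the second requires both
pro\-ject\-a\-bi\-li\-ty of $\,\xi\,$ (to kill the Lie derivative when tested
against pro\-ject\-a\-ble vector fields) and the tor\-sion-free property of
$\,\nabla\,$ (to trade $\,\nav v_i\,$ for a Lie bracket plus a term that
re-enters $\,\dz$).
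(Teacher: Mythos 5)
Your proof is correct and follows essentially the same route as the paper's: both evaluate $\nav\xi$ via the Leibniz rule, use parallelism of $\dz$ and annihilation by $\xi$ for the first claim, and for the second test against projectable vector fields so that $[v,v_i]\in\dz$ by (\ref{prj}), $\nabla_{v_i}v\in\dz$ by parallelism, and $\xi(v_1,\dots,v_r)$ is constant along the leaves. Your repackaging of the second claim through $\Li_v\xi$ is only a cosmetic reorganization of the same three ingredients.
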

We get both claims evaluating 
$\,[\nav\xi](v_1\w,\dots,v\hn_r\w)\,$ from the Leib\-niz rule; 
the second, as $\,v_1\w,\dots,v\hn_r\w$ may be assumed
pro\-ject\-a\-ble 
along $\,\dz$, so that $\hs\xi(v_1\w,\dots,v\hn_r\w)\hs$ is constant along
$\,\dz$, while, by (\ref{prj}), each $\,[v,v_i\w]$, and hence 
$\,\nav\nh v_i\w$, is a section of $\,\dz$.
\begin{lemma}\label{coord}With the index ranges\/ $\,j=1,\dots,r\,$ and\/
$\,k=r+1,\dots,n$, let functions\/ $\,x^k$ on an\/ $\,n$-di\-men\-sion\-al
manifold\/ $\,M\,$
have\/
$\,dx^{r+1}\nh\nnh\wedge\ldots\hs\wedge\,dx^n\nh\ne0\,$ everywhere and be
constant along mutually commuting vector fields\/ $\,e\nh_j\w$ that are
linearly independent at each point. Then, locally, there exist
coordinates\/ $\,x^1\nnh,\dots,x^n$ including our\/ $\,x^k$ for which\/
$\,e\nh_j\w$, $\,j=1,\dots,r$, are the coordinate vector fields\/
$\,\partial\nh_j\w$.
\end{lemma}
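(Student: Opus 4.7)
The plan is to first straighten the commuting family $e_1,\ldots,e_r$ into coordinate vector fields via the simultaneous-flow-box theorem, then substitute the given functions $x^{r+1},\ldots,x^n$ in place of the transverse coordinates, and finally verify that this substitution yields a genuine chart in which the $e_j$ become the first $r$ coordinate vector fields.

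For the first step, since the $e_j$ are pointwise linearly independent and satisfy $[e_i,e_j]=0$, the standard simultaneous-flow-box theorem supplies, around any chosen point of $M$, local coordinates $y^1,\ldots,y^n$ with $e_j=\partial/\partial y^j$ for $j=1,\ldots,r$. The hypothesis that each $x^k$ is constant along every $e_j$ then reads $\partial x^k/\partial y^j=0$ for $j\le r<k$, so $x^{r+1},\ldots,x^n$ depend only on $y^{r+1},\ldots,y^n$.

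Finally, setting $x^j:=y^j$ for $j\le r$, I would show that $(x^1,\ldots,x^n)$ is a local chart. The Jacobian expressing these functions in terms of $(y^1,\ldots,y^n)$ is block-diagonal with blocks $I_r$ and $A:=(\partial x^k/\partial y^l)_{k,l>r}$, and expanding the top wedge in $y$-coordinates gives $dx^{r+1}\wedge\ldots\wedge dx^n=(\det A)\,dy^{r+1}\wedge\ldots\wedge dy^n$; the nonvanishing hypothesis on this wedge thus yields $\det A\ne 0$, and the inverse function theorem provides the chart. In it, $e_j=\partial/\partial x^j$ because the two agree on every $dx^i$: for $i\le r$ because $x^i=y^i$ and $e_j=\partial/\partial y^j$, and for $i>r$ by the annihilation hypothesis $e_j(x^k)=0$. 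The only non-routine ingredient is the simultaneous-flow-box theorem; the remainder is linear-algebra bookkeeping, so I do not anticipate a real obstacle.
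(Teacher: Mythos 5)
Your proof is correct and follows essentially the same route as the paper: both arguments rest on straightening the commuting fields by their flows and then using the hypotheses $e_j(x^k)=0$ and $dx^{r+1}\wedge\ldots\wedge dx^n\ne0$ to complete the chart. The only cosmetic difference is that the paper does this in one step, flowing out from a transversal $Q$ on which the $x^k$ already restrict to coordinates, whereas you first invoke the simultaneous flow-box theorem and then swap the transverse coordinates via the inverse function theorem.
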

\begin{proof}Fix a co\-di\-men\-sion $\,r\hn$ sub\-man\-i\-fold $\,Q\hs$
transverse to the span of our $\,e\nh_j\w$. Let 
$\,F(z,x^1\nh,\dots,x^r)=x(1)\,$ for $\,z\in Q\,$ and real
$\,x^1\nh,\dots,x^r$ such that $\,x(1)$ exists for the integral curve
$\,t\mapsto x(t)\,$ of the combination $\,x^je\nnh_j\w$ with $\,x(0)=z$. 
By the inverse mapping theorem, $\,F\,$ provides a dif\-feo\-mor\-phic
identification of a neighborhood of any given point $\,z\,$ in $\,M\,$
with the Car\-te\-sian product of a neighborhood of $\,z\,$ in $\,Q\,$ and 
a neighborhood of zero in the Euclidean $\,r$-space. 
This turns the variables $\,x^j$ into the required additional coordinate
functions.
\end{proof}
\begin{lemma}\label{plbcx}For a closed differential\/ $\,\px$-form\/  
$\,\eta\,$ on a Car\-te\-sian-prod\-uct manifold $\,M\hs$ with\/ 
$\,T\nh M\nh=\hs\hz^+\nnh\oplus\,\hz^-$ for the factor distributions\/
$\,\hz^\pm\nnh$, 
let $\,\eta=\eta^+\nnh\hn+\hs\eta^-\nnh$, where each of the\/ $\,\px$-forms\/  
$\,\eta^\pm\hn\nnh$ annihilates\/ $\,\hz^\mp\nnh$. Then both\/
$\,\eta^\pm\hn\nnh$
are the pull\-backs to\/ $\,M\hs$ of some closed $\,\px$-forms on the factor 
manifolds.
\end{lemma}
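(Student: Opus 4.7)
The plan is to work in local product coordinates, decompose everything by bidegree with respect to the splitting $\,T\nh M=\hz^+\oplus\hz^-$, and then read off the conclusion from $\,d\eta=0\,$ bidegree by bidegree. Around any point I would fix a local product chart, with coordinates $\,x^1\nh,\dots,x^m$ on the first factor (so that the $\,\partial/\partial x^i$ span $\,\hz^+$) and $\,y^1\nh,\dots,y^{n-m}$ on the second (so that the $\,\partial/\partial y^a$ span $\,\hz^-$). Call a $\,\px$-form of \emph{bidegree\/} $\,(q,r)$, with $\,q+r=\px$, if in such a chart it is a sum of wedge products involving exactly $\,q\,$ factors $\,dx^i$ and $\,r\,$ factors $\,dy^a$. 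The hypothesis on $\,\eta\,$ then amounts to saying that $\,\eta^+$ has pure bidegree $\,(\px,0)\,$ and $\,\eta^-$ has pure bidegree $\,(0,\px)$. Splitting $\,d=d^++d^-$ into the $\,x$-derivative part $\,d^+$ and the $\,y$-derivative part $\,d^-$, we have $\,d^+$ raising the first bidegree-index by one and $\,d^-$ raising the second.

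Consequently,
\[
d\eta\,=\,d^+\eta^++d^-\eta^++d^+\eta^-+d^-\eta^-,
\]
a sum of four terms sitting in the pairwise distinct bidegrees $\,(\px+1,0)$, $\,(\px,1)$, $\,(1,\px)$, $\,(0,\px+1)$, so $\,d\eta=0\,$ forces each summand to vanish separately. Expanding $\,\eta^+=\sum_I f_I(x,y)\,dx^I$ over $\,\px$-element multi-indices $\,I\subseteq\{1,\dots,m\}$, vanishing of the bidegree-$\,(\px,1)\,$ term $\,d^-\eta^+$ gives $\,\partial f_I/\partial y^a=0\,$ for every $\,I\,$ and $\,a$, so each $\,f_I\,$ depends on $\,x\,$ alone, and $\,\eta^+$ is locally the pullback, via the projection onto the first factor, of the $\,\px$-form $\,\sum_I f_I(x)\,dx^I$; vanishing of the bidegree-$\,(\px+1,0)\,$ term $\,d^+\eta^+$ then says that this pullback is closed on the first factor. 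Symmetric reasoning handles $\,\eta^-$.

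Finally, because on a Cartesian product the projections onto the factors are global submersions with connected fibers (each fiber of one projection is the opposite factor, a manifold and hence connected), a form on $\,M\,$ that is locally a pullback along such a projection descends to a globally defined form on the base; this yields closed $\,\px$-forms on the two factors whose pullbacks reconstitute $\,\eta^\pm$. The only real obstacle is the bookkeeping of bidegrees; the heart of the argument is the observation that the four components of $\,d\eta\,$ lie in pairwise distinct bidegrees, so the single equation $\,d\eta=0\,$ separates into four independent vanishing conditions.
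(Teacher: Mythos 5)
Your proof is correct and follows essentially the same route as the paper's: both work in Cartesian-product coordinates and observe that the components of $\,d\eta\,$ of different types must vanish separately, which yields $\,d\eta^\pm=0\,$ together with constancy of the components of $\,\eta^\pm$ along $\,\hz^\mp$. The only point worth noting is that your bidegree separation (the claim that $\,(\px,1)\,$ and $\,(1,\px)\,$ are distinct) silently requires $\,\px\ge2$ --- the same implicit restriction underlies the paper's one-line argument, and the lemma is only ever invoked for $\,\px=3$.
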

\begin{proof}Applying (\ref{dbr}) to suitable coordinate vector fields for 
a Car\-te\-sian-prod\-uct coordinate system we see that $\,d\eta^\pm\nnh=0\,$ 
and the component functions of $\,\eta^\pm$ are constant along $\,\hz^\mp\nh$,
as required.
\end{proof}
\begin{remark}\label{divrg}Locally in $\,\rp\nh$, every function $\,\phi\,$ is
the divergence of some vector field $\,w$, for instance,
$\,w=(\psi,0,\dots,0)\,$ with $\,\psi\,$ such that 
$\,\hs\partial\nh_1\w\hn\psi=\phi$.
\end{remark}
\begin{remark}\label{dufrm}Given an $\,n$-di\-men\-sion\-al manifold $\,M\,$ 
and a local trivialization $\,\xi^1\nnh,\dots,\hs\xi^n$ of
$\,T^*\hskip-2.1ptM\,$ 
dual to a local trivialization $\,e_1\w,\dots,e_n\w$ of $\,T\nh M\,$ with
functions $\,C_{i\hn j}^k$ such that $\,[\hs e\hn_i\w,e\nh_j\w]=C_{i\hn j}^ke\hn_k\w$,
one has, by (\ref{dbr}) and (\ref{cvt}), 
$\,[\hh d\hh\xi^k](e\hn_i\w,e\nh_j\w)=-C_{i\hn j}^k\nh$, and 
hence $\,d\hh\xi^k\nh=-C_{i\hn j}^k\hs\xi\hh^i\nnh\nh\wedge\hs\xi^j\nh$. 
\end{remark}
\begin{remark}\label{clsed}If $\,\px\ge2$, any $\,(\px,0)\,$ differential form
$\,\omega\,$ on a complex manifold, having a closed real part, must 
itself be closed (that is, hol\-o\-mor\-phic). In fact, 
$\,d\hs\omega=\partial\hs\omega+\,\overline{\hskip-2pt\partial}\hs\omega\,$ is
then imaginary, and hence opposite to its conjugate
$\,d\,\hh\overline{\hskip-1pt\omega\hskip-1pt}$, while
$\,d\,\hh\overline{\hskip-1pt\omega\hskip-1pt}\,$ has
bi\-ho\-mo\-ge\-ne\-ous components of bi\-de\-grees
$\,(0,\px+1)\,$ and $\,(1,\px)$, different of the bi\-de\-grees 
$\,(\px+1,0)\,$ and $\,(\px,1)\,$ for $\,d\hs\omega\,$ unless
$\,\px\in\{0,1\}$.
\end{remark}

\section{Some invariants of exterior forms}\label{ie}
\setcounter{equation}{0}
Throughout this section $\,V\hh$ is a real vector space of dimension
$\,n$. We call a $\,\px\hs$-vec\-tor $\,\beta\in V^{\wedge\px}$ or an
exterior $\,\px\hs$-form $\,\my\in[V\hn^*]^{\wedge\px}$ {\it
de\-com\-pos\-able\/} if it is the exterior product of $\,\px\,$ vectors or
$\,1$-forms. A {\it volume form\/} in $\,V\hs$ is a nonzero 
exterior $\,n$-form, which amounts to a nonzero scalar when $\,n=0$.

Let $\,\my\in[V\hn^*]^{\wedge\px}$ be an exterior $\,\px\hs$-form in 
$\,V\nnh$, where $\,1\le\px\le n$. Its  {\it rank\/} 
is the minimum
dimension of a vector space $\,W\nh$ such that $\,\my\,$ equals the pull\-back
of an exterior $\,\px\hs$-form in $\,W\nh$ under some linear operator
$\,V\to W\nnh$. Since $\,V\to W\hs$ may be assumed surjective, when
$\,r=\mathrm{rank}\hskip2.7pt\my\,$ and $\,\my\ne0$,
\begin{equation}\label{prn}
\px\ne r-1\,\mathrm{\ and\
}\px\le r\hn\le n\,\mathrm{\ with\ }\,\px=r\,\mathrm{\ if\ and\ only\ if\
}\,\my\,\mathrm{\ is\ de\-com\-pos\-able,}
\end{equation}
due to the well-known de\-com\-pos\-ab\-il\-i\-ty of $\,r\hn$-forms and
$\,(r\hn-1)$-forms in dimension $\,r$. See, for instance,
\cite[pp.\,287-288]{kostrikin-manin}, 
\cite[Examples\,1.6,\,1.8]{munoz-masque-pozo-coronado-rosado-maria} or 
\cite[Sect.\,11]{derdzinski-piccione-terek}.

We associate with $\,\my\,$ two vector sub\-spaces of $\,V\nnh$. One is the
{\it kernel\/} $\,Z\,$ of $\,\my$, in other words, the kernel of the operator
$\,V\nh\to[V\hn^*]^{\wedge(\px-1)}$ sending $\,v\,$ to
$\,\my(v,\,\cdot\,,\dots,\,\cdot\,)$. 
The other space, which we call the {\it divisibility space\/} of $\,\my\,$
and denote by $\,D$, is the polar space (annihilator) of the sub\-space
$\,D'$ of $\,[V\hn^*]\,$ consisting
of $\,1$-forms $\,\xi\in V\hn^*$ such that $\,\my\,$ is
$\,\wedge$-di\-vis\-i\-ble by $\,\xi\,$ (or, equivalently,
$\,\xi\wedge\my=0$). Thus, $\,D\,$ is the
simultaneous kernel of all such $\,1$-forms $\,\xi$. Then, for
$\,r=\mathrm{rank}\hskip2.7pt\my\,$ and $\,k=\dim Z$,
\begin{equation}\label{rnk}
\mathrm{a)}\hskip6ptk\,=\,n\,-\,r,\qquad\mathrm{b)}\hskip6ptZ\,
\subseteq\hs D\,\mathrm{\ unless\ }\,\my=0.
\end{equation}
In fact, (\ref{rnk}-a) follows since $\,\my\,$ clearly equals
the pull\-back under the projection operator $\,V\to V\nnh/Z\,$ of the exterior
$\,\px\hs$-form in $\,V\nnh/Z\,$ that $\,\my\,$ descends to, the
min\-i\-mum-di\-men\-sion clause being obvious as a pull\-back form vanishes
on the kernel of the operator used to pull it back. To obtain (\ref{rnk}-b),
note that, if $\,v\in Z\,$ and $\,\xi\in D'\nh$, one has
$\,\xi(v)\my(v_1\w,\dots,v\hn_\px\w)=[\hs\xi\wedge\my](v,v_1\w,\dots,v\hn_\px\w)=0\,$
for all $\,v_1\w,\dots,v\hn_\px\w\in V\nh$.

When $\,\my=0\,$ and $\,V\nh\ne\{0\}$, (\ref{rnk}-b) fails to
hold: $\,Z\nh=\hh V\hh$ and $\,D=\{0\}$. For nonzero scalars ($0$-forms)
$\,\my\,$ we set $\,Z\nh=\{0\}\,$ and $\,D=V\nnh$. Generally,
\begin{equation}\label{ind}
\mathrm{we\ call\ an\ exterior\ form\ }\,\my\,\text{\ \it
in\-di\-vis\-i\-ble}\mathrm{\hs\ if\ }\,D=V\nnh,
\end{equation}
that is, if $\,\xi\wedge\my\ne0\,$ whenever
$\,\xi\in V\hn^*\nh\smallsetminus\{0\}$. We will repeatedly assume that
\begin{equation}\label{bas}
\xi^1\nnh,\dots,\hs\xi^n\mathrm{\ is\ the\ basis\ of\ }\,V\hn^*\nnh\mathrm{\
dual\ to\ a\ basis\ }\,e_1\w,\dots,e_n\w\mathrm{\ of\ }\,V\nh. 
\end{equation}
Then, by (\ref{cvt}), for
$\,\xi=\my(\,\cdot\,,e\nh_{j_2}\w,\dots,e\nh_{j_\px}\w)\,$ with 
$\,\my=\xi^{i_1}\nnh\wedge\ldots\wedge\hs\xi^{i_\px}\nnh\ne0$,
\begin{equation}\label{wdg}
\begin{array}{l}
\xi\hs=\hs\pm\hs\xi\hh^i\,\mathrm{\ if\ 
}\,\{i_1\w,\dots,i_\px\w\}\,
=\,\{i\}\hs\cup\{j_2\w,\dots, j_\px\w\}\mathrm{,\hh\
and\ }\,\hs\xi=\hs0\\
\mathrm{when\ 
}\,\{i_1\w,\dots,i_\px\w\}
\smallsetminus\{j_2\w,\dots, j_\px\w\}\,\mathrm{\
is\ not\ a\ one}\hyp\mathrm{el\-e\-ment\ set.}
\end{array}
\end{equation}
\begin{lemma}\label{divpr}If\/ 
$\,\xi^1\nnh,\dots,\hs\xi^s\nh\in V\hn^*$ are linearly independent, 
any exterior form\/ $\,\eta$ with\/ 
$\,\xi^1\nnh\nh\wedge\ldots\wedge\hs\xi^s\nnh\nh\wedge\eta=0\,$ 
lies in the ideal generated by\/ 
$\,\xi^1\nnh,\dots,\hs\xi^s\nnh$.
\end{lemma}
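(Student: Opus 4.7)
The plan is to reduce the assertion to an explicit basis calculation by completing $\xi^1,\dots,\xi^s$ to a basis $\xi^1,\dots,\xi^n$ of $V^*$ as in (\ref{bas}), and expanding $\eta$ in the associated standard basis of exterior $p$-forms (where $p$ is the degree of $\eta$). Namely, write
\[
\eta\,=\,\eta_1+\eta_2,
\]
where $\eta_1$ collects the basis terms $\xi^{i_1}\wedge\dots\wedge\xi^{i_p}$ with $i_1<\dots<i_p$ for which some $i_k\le s$, and $\eta_2$ collects those with $s<i_1<\dots<i_p\le n$. By construction $\eta_1$ already lies in the ideal generated by $\xi^1,\dots,\xi^s$, so the problem reduces to showing $\eta_2=0$.

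Next I would wedge the hypothesis $\xi^1\wedge\dots\wedge\xi^s\wedge\eta=0$ out in this decomposition. Every summand of $\xi^1\wedge\dots\wedge\xi^s\wedge\eta_1$ contains some $\xi^{i_k}$ with $i_k\le s$ twice and hence vanishes, leaving
\[
\xi^1\wedge\dots\wedge\xi^s\wedge\eta_2\,=\,0.
\]
Expanding $\eta_2=\sum_{s<i_1<\dots<i_p}a_{i_1\dots i_p}\,\xi^{i_1}\wedge\dots\wedge\xi^{i_p}$, the left-hand side becomes a linear combination of the exterior products $\xi^1\wedge\dots\wedge\xi^s\wedge\xi^{i_1}\wedge\dots\wedge\xi^{i_p}$ with $s<i_1<\dots<i_p\le n$, which are pairwise distinct elements of the standard basis of $[V^*]^{\wedge(s+p)}$ and therefore linearly independent. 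Hence all the coefficients $a_{i_1\dots i_p}$ vanish, so $\eta_2=0$ and $\eta=\eta_1$ lies in the required ideal.

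There is no real obstacle here; the result is essentially the well-known first step of the Cartan lemma family. The only point requiring a moment of care is the clean bookkeeping of which standard basis multi-indices survive after wedging with $\xi^1\wedge\dots\wedge\xi^s$, together with the observation that linear independence of $\xi^1,\dots,\xi^s$ (which lets one complete them to a basis) is precisely what makes the resulting $(s+p)$-fold wedges linearly independent.
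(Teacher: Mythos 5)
Your proof is correct and follows essentially the same route as the paper's: complete $\xi^1,\dots,\xi^s$ to a basis, expand $\eta$ in the induced basis of $[V^*]^{\wedge p}$, and observe that the basis products avoiding all of $\xi^1,\dots,\xi^s$ stay linearly independent after wedging with $\xi^1\wedge\ldots\wedge\xi^s$, so the hypothesis forces their coefficients to vanish. The only difference is presentational (your explicit split $\eta=\eta_1+\eta_2$ versus the paper's more compressed phrasing), so there is nothing to add.
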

\begin{proof}Let $\,\eta\in[V\hn^*]^{\wedge\px}\nnh$. Expanding $\,\eta\,$ 
as a linear combination of the obvious basis of $\,[V\hn^*]^{\wedge\px}$
arising from a basis $\,\xi^1\nnh,\dots,\hs\xi^n$ of $\,V\hn^*$ which includes 
our $\,\xi^1\nnh,\dots,\hs\xi^s\nh$, we see that the $\,\px\hs$-fold 
exterior products without any of the factors
$\,\xi^1\nnh,\dots,\hs\xi^s$ occurring in the expansion of
$\,\eta\,$ with nonzero coefficients would remain linearly independent even
after being $\,\wedge$-mul\-ti\-plied by
$\,\xi^1\nnh\nh\wedge\ldots\wedge\hs\xi^s\nh$. As
$\,\xi^1\nnh\nh\wedge\ldots\wedge\hs\xi^s\nnh\nh\wedge\eta=0$, there are no
$\,\px\hs$-fold products with the above properties.
\end{proof}
\begin{remark}\label{image}The {\it image\/} of
$\,\my\in[V\hn^*]^{\wedge\px}\nnh$, defined to be the span in $\,V\hn^*$ of 
all $\,\my(\,\cdot\,,v_2\w,\dots,v\hn_\px\w)\,$ for
$\,v_2\w,\dots,v\hn_\px\w\in V\nnh$. Obviously,
\begin{equation}\label{dim}
\begin{array}{l}
\mathrm{the\ image\ of\ }\,\my\,\mathrm{\ is\ the\ polar\ space\ of\ 
}\,\mathrm{Ker}\,\my\mathrm{,\ so\ that,\ by}\\
\mathrm{(\ref{rnk}}\hyp\mathrm{a),\ the\ dimension\ of\ the\ image\ of\
}\,\my\,\mathrm{\ equals\ 
}\,\hs\mathrm{rank}\hskip2.7pt\my
\end{array}
\end{equation}
Assuming (\ref{bas}), we easily see that 
\begin{equation}\label{ims}
\mathrm{the\ image\ of\ }\,\my\,\mathrm{\ is\ spanned\ by\
}\hs\{\my(\,\cdot\,,e\nh_{j_2}\w,\dots,e\nh_{j_\px}\w)\nh:\nh
1\le j_2\w<\ldots<j_\px\w\le n\}
\end{equation}
and so, as a consequence of (\ref{wdg}), the image of $\,\my\,$ is then
\begin{equation}\label{ctd}
\mathrm{contained\ in\ the\ span\ of\ all\ }\,\xi\hh^i\mathrm{\ occurring\
in\ }\,\my\hh.
\end{equation}
The word `occurring\nh' means here that the expansion of $\,\my\,$ 
as a linear combination of the obvious basis of $\,[V\hn^*]^{\wedge\px}$ 
includes, with a nonzero coefficient, a $\,\px\hs$-fold 
exterior product 
involving the factor $\,\xi\hh^i\nh$.
\end{remark}
\begin{remark}\label{imges}Under the assumption (\ref{bas}), 
we will say that a $\,\px\hs$-el\-e\-ment
set $\,I\subseteq\{1,\dots,n\}\,$ {\it occurs in\/} an exterior
$\,\px\hs$-form $\,\my\,$ if the expansion of $\,\my\,$
as a linear combination of the obvious basis of $\,[V\hn^*]^{\wedge\px}$
includes $\,\xi^{i_1}\nnh\wedge\ldots\wedge\hs\xi^{i_\px}$ with a nonzero
coefficient, where $\,I\nh=\{i_1\w,\dots,i_\px\w\}$. Let $\,S\,$ be the union
of the distinct $\,\px\hs$-el\-e\-ment sub\-sets $\,I\nnh_1\w,\dots,I\nh_l\w$
of $\,\{1,\dots,n\}\,$ occuring in $\,\my$. If $\,\px\ge2\,$ and 
any $\,(\px-1)$-el\-e\-ment sub\-set of $\,\{1,\dots,n\}\,$ 
is contained in at most one of $\,I\nnh_1\w,\dots,I\nh_l\w$ (for instance,
$\,I\nnh_1\w,\dots,I\nh_l\w$ are pairwise disjoint), 
then the image of $\,\my$ is the span of $\,\{\xi\hh^i:i\in S\}\,$ (and hence, 
by (\ref{dim}), $\,\mathrm{rank}\hskip2.7pt\my=|S|$, so that 
$\,\mathrm{rank}\hskip2.7pt\my=\px\,$ if $\,\my\,$ is de\-com\-pos\-able,
with $\,l=1$). In fact, one inclusion is provided by (\ref{ctd}). For the
other one, we fix $\,\xi\hh^i$ with $\,i\in S$ and apply (\ref{wdg}) to 
$\,j_2\w,\dots, j_\px\w$ such that
$\,\{i\}\cup\{j_2\w,\dots, j_\px\w\}\,$ is one of $\,I\nnh_1\w,\dots,I\nh_l\w$.
\end{remark}
\begin{remark}\label{dvker}The kernel of any nonzero de\-com\-pos\-able
$\,\px\hs$-form $\,\my\,$ coincides with its divisibility space: writing
$\,\my=\xi^1\nnh\nh\wedge\ldots\wedge\hs\xi\hh^\px\nnh$, with (\ref{bas}), we
see that both have the same polar space 
$\,\mathrm{Span}\hs(\xi^1\nnh,\dots,\hs\xi\hh^\px)$. (The former according to
(\ref{dim}) and Remark~\ref{imges}, the latter since the equality 
$\,\xi^1\nnh\nh\wedge\ldots\wedge\hs\xi\hh^\px\nnh\nh\wedge\hs\xi=0\,$ for a 
$\,1$-form $\,\xi\hh$ amounts to linear dependence of the system 
$\,\xi^1\nnh,\dots,\hs\xi\hh^\px\nnh,\hs\xi$.)
\end{remark}
\begin{remark}\label{nondc}For $\,\px\ge2\,$ and linearly independent
$\,1$-forms $\,\xi^1\nnh,\dots,\hs\xi\hh^{\px+2}\nh\in V\hn^*\nh$,
the $\,\px\hs$-form $\,\my=(\xi^1\nnh\nh\wedge\hs\xi^2\hn
+\hs\xi^3\nnh\nh\wedge\hs\xi\hs^4)\hs\wedge
\xi\hh^5\nh\nnh\wedge\ldots\wedge\hs\xi\hh^{\px+2}$ is not de\-com\-pos\-able:
Remark~\ref{imges} gives $\,\mathrm{rank}\hskip2.7pt\my=\px+2\,$ for our
$\,\my$, and rank equal to $\,\px\,$ for de\-com\-pos\-able $\,\px\hs$-forms.
\end{remark}
\begin{lemma}\label{dirpd}The divisibility space of an exterior\/
$\,\px\hs$-form\/ 
$\,\my=\theta\wedge\zeta\,$ is\/ $\,\{0\}\times D$ whenever\/
$\,\theta\,$ and\/ $\,\zeta$ are the pull\-backs to the di\-rect-prod\-uct
vector space\/ $\,V\nh=W\nnh\times\hn D$ of a volume form in\/ 
$\,W\hs$ and an in\-di\-vis\-i\-ble exterior form in\/ $\,D$.
\end{lemma}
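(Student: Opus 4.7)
The plan is to translate the conclusion into a statement about the divisibility ideal $D' \subseteq V^*$ and then use the product decomposition $V = W \times D$ together with the indivisibility of $\zeta$.

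First I would fix the notation: the direct-product splitting $V = W \times D$ induces the splitting $V^* = W^* \oplus D^*$, where (abusing notation) $W^*$ and $D^*$ denote the subspaces of 1-forms that vanish on $\{0\}\times D$ and on $W\times\{0\}$ respectively, so that $\theta$ lives in $\Lambda^{\dim W} W^*$ and $\zeta$ lives in $\Lambda^{\deg \zeta} D^*$. Note that $\{0\}\times D$ is precisely the annihilator of $W^*\subseteq V^*$. Since the divisibility space of $\mu$ is by definition the annihilator of $D'=\{\xi\in V^*:\xi\wedge\mu=0\}$, the claim amounts to the identity $D'=W^*$.

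Next I would prove the inclusion $W^*\subseteq D'$. For $\xi\in W^*$, the wedge $\xi\wedge\theta$ lies in $\Lambda^{\dim W+1} W^*$, which is zero because $\theta$ already has top degree on $W$; hence $\xi\wedge\mu=(\xi\wedge\theta)\wedge\zeta=0$.

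For the reverse inclusion $D'\subseteq W^*$, I would write an arbitrary $\xi\in V^*$ as $\xi=\xi_W+\xi_D$ with $\xi_W\in W^*$ and $\xi_D\in D^*$. The computation just made shows $\xi_W\wedge\mu=0$, so $\xi\wedge\mu=0$ reduces to $\xi_D\wedge\theta\wedge\zeta=0$. Moving $\xi_D$ past $\theta$ introduces only a sign, so the question becomes whether $\theta\wedge(\xi_D\wedge\zeta)=0$. Choosing a basis of $V^*$ adapted to the splitting, the wedge products of $\theta$ with the obvious basis elements of $\Lambda^\bullet D^*$ are linearly independent in $\Lambda^\bullet V^*$; hence $\theta\wedge\alpha=0$ for $\alpha\in\Lambda^\bullet D^*$ forces $\alpha=0$. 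Applied to $\alpha=\xi_D\wedge\zeta$, this yields $\xi_D\wedge\zeta=0$ in $\Lambda^\bullet D^*$, and indivisibility of $\zeta$ in $D$ (definition (\ref{ind})) now gives $\xi_D=0$, i.e.\ $\xi\in W^*$.

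The argument is almost entirely bookkeeping; the only potential pitfall is to keep the two distinct usages of the letter $D$ straight (the factor space and the divisibility space) and to recognize clearly that $\theta\wedge(\,\cdot\,)\colon\Lambda^\bullet D^*\to \Lambda^\bullet V^*$ is injective, which is what lets us reduce the problem from $V$ to $D$ and thereby invoke the indivisibility hypothesis on $\zeta$.
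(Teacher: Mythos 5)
Your proof is correct and follows essentially the same route as the paper's: both fix a basis adapted to the splitting $V=W\times D$, observe that wedging with $\theta$ is injective on forms built from $D^*$ (the paper phrases this as ``canceling'' the factor $\xi^1\wedge\ldots\wedge\xi^s$ in the basis expansion), and thereby reduce $\xi\wedge\my=0$ to $\xi_D\wedge\zeta=0$ in $D$, where indivisibility of $\zeta$ forces $\xi_D=0$. Your write-up merely makes the two inclusions $W^*\subseteq D'$ and $D'\subseteq W^*$ explicit, which the paper compresses into a single ``if and only if.''
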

\begin{proof}For $\,e_1\w,\dots,e_n\w$ with (\ref{bas}) having the first
$\,s\,$ vectors in
$\,W\nh\times\{0\}\,$ and the last $\,n-s\,$ in $\,\{0\}\times D$, such that 
$\,\theta=\xi^1\nnh\nh\wedge\ldots\wedge\hs\xi^s\nnh$, expanding
$\,\xi\wedge\my\,$ as a linear combination of the
obvious basis of $\,[V\hn^*]^{\wedge\px}\nnh$, where
$\,\xi=a\hn_i\w\hs\xi\hh^i\nh$, and 
``canceling'' the factor $\,\xi^1\nnh\nh\wedge\ldots\wedge\hs\xi^s$ in each
nonzero term of the expansion, we see that $\,\xi\wedge\my=0\,$ if and
only if $\,\zeta\,$ is $\,\wedge$-di\-vis\-i\-ble by 
$\,a\hn_{s+1}\w\hs\xi\hh^{s+1}\nnh+\ldots+a\hn_n\w\hs\xi\hh^n$ when viewed
as a form in $\,D$, which amounts to $\,a\hn_{s+1}\w=\ldots=a\hn_n\w=0$.
\end{proof}

\section{Further invariants}\label{fi}
\setcounter{equation}{0}
As before, $\,V\hh$ is a real vector space of dimension $\,n$. 
The following lemma may be thought of as a converse of Lemma~\ref{dirpd}.
\begin{lemma}\label{divis}Let\/ $\,\my\in[V\hn^*]^{\wedge\px}\nh$ be a nonzero
exterior\/ $\,\px\hs$-form of rank\/ $\,r\hs$ in\/
$\,V\nnh$,
with\/ $\,k=n-\hs r\hs=\,\dim Z\,$ and\/ $\,s=n-\dim D\hs\le\,r\hs$ for the
kernel\/ $\,Z\,$ and divisibility space\/ $\,D\,$ of\/ $\,\my$, cf.\
{\rm(\ref{rnk})}. This has four consequences.
\begin{enumerate}
\item[{\rm(a)}] $s\ne\px-1\,$ and\/ $\hs\,s\le\px$, with equality if and only
if\/ $\,\my\,$ is de\-com\-pos\-able.
\item[{\rm(b)}] For any basis\/ 
$\,\xi^1\nnh,\dots,\hs\xi^n$ of\/ $\,V\hn^*$ such that\/ $\,\xi^1\nnh,\dots,\hs\xi^s$
is a basis of\/ $\,D'\nnh$, the polar space of\/ $\,D$, one has\/ 
$\,\my=\xi^1\nnh\nh\wedge\ldots\wedge\hs\xi^s\nnh\wedge\zeta$, where the
in\-di\-vis\-i\-ble exterior\/ $\,(\px-\nh s)$-form\/ 
$\,\zeta\hs$ is a linear combination
of\/ $\,(\px-\nh s)$-factor exterior products of\/ $1$-forms from the set
$\,\{\xi^{s+1}\nnh,\dots,\hs\xi^n\}$.
\item[{\rm(c)}] If\/
$\,\my=\xi^1\nnh\nh\wedge\ldots\wedge\hs\xi^s\nnh\wedge\zeta\,$ for some
basis\/
$\,\xi^1\nnh,\dots,\hs\xi^s$ of\/ $\,D'$ and some exterior\/
$\,(\px-\nh s)$-form\/ 
$\,\zeta$, then the restriction of any such\/ $\,\zeta\hs$ to\/ $\hs\,D\,$ 
is, uniquely, up to a nonzero scalar factor, determined by\/ $\,\my$.
\item[{\rm(d)}] The above restriction of\/
$\,\zeta\hs$ to\/ $\hs\,D\,$ is in\-di\-vis\-i\-ble in\/ $\,D$.
\end{enumerate}
\end{lemma}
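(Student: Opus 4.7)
My plan is to prove (b) first via a direct basis expansion, derive (a) from the resulting decomposition together with Remark~\ref{dvker}, establish (d) by a pull\-back-from-$D$ argument, and obtain (c) by comparing two such decompositions via Lemma~\ref{divpr}.

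For (b), I will extend $\xi^1,\ldots,\xi^s$ to a full basis $\xi^1,\ldots,\xi^n$ of $V^*$ and expand $\my$ in the obvious basis of $[V^*]^{\wedge\px}$. Because each $\xi^j$ with $j\le s$ lies in $D'$, we have $\xi^j\wedge\my=0$; since the terms $\xi^j\wedge\xi^{i_1}\wedge\ldots\wedge\xi^{i_\px}$ with $j\notin\{i_1,\ldots,i_\px\}$ remain linearly independent in $[V^*]^{\wedge(\px+1)}$, the coefficient of any $\xi^{i_1}\wedge\ldots\wedge\xi^{i_\px}$ that omits some $\xi^j$ with $j\le s$ must vanish. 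Hence every multi-index contributing to $\my$ contains $\{1,\ldots,s\}$, and factoring out $\xi^1\wedge\ldots\wedge\xi^s$ yields $\my=\xi^1\wedge\ldots\wedge\xi^s\wedge\zeta$ with $\zeta$ built exclusively from $\xi^{s+1},\ldots,\xi^n$.

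From this expression, (a) is mostly bookkeeping. The bound $s\le\px$ is automatic from the degree of $\zeta$, and $s=\px$ holds precisely when $\zeta$ is a nonzero scalar, i.e., when $\my$ is decomposable; for the converse I invoke Remark~\ref{dvker} with (\ref{rnk}-a) and (\ref{prn}) to see that for a decomposable nonzero $\my$ one has $s=n-\dim D=n-\dim Z=r=\px$. The case $s=\px-1$ cannot occur: it would make $\zeta$ a nonzero $1$-form in $\mathrm{Span}(\xi^{s+1},\ldots,\xi^n)$ that divides $\my$, whence $\zeta\in D'$, contradicting the trivial intersection $D'\cap\mathrm{Span}(\xi^{s+1},\ldots,\xi^n)=\{0\}$.

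For (c), comparing two expressions $\my=\xi^1\wedge\ldots\wedge\xi^s\wedge\zeta=\xi^1\wedge\ldots\wedge\xi^s\wedge\zeta'$ with the same basis of $D'$ gives $\xi^1\wedge\ldots\wedge\xi^s\wedge(\zeta-\zeta')=0$, so Lemma~\ref{divpr} places $\zeta-\zeta'$ in the ideal generated by $\xi^1,\ldots,\xi^s\in D'$, and such forms restrict to zero on $D$; passing to a different basis of $D'$ multiplies $\xi^1\wedge\ldots\wedge\xi^s$ by the determinant of the change-of-basis matrix, rescaling $\zeta|_D$ by its reciprocal. For (d), the hard part: if $\zeta|_D$ were divisible by some nonzero $1$-form $\eta$ on $D$, I would lift $\eta$ and a cofactor back to unique preimages $\tilde\eta,\tilde\beta$ in $\mathrm{Span}(\xi^{s+1},\ldots,\xi^n)$, using that $\xi^{s+1}|_D,\ldots,\xi^n|_D$ form a basis of $D^*$, and observe that $\zeta-\tilde\eta\wedge\tilde\beta$ is a form in these same variables vanishing on $D$, hence identically zero. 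Substitution would then give $\my=\xi^1\wedge\ldots\wedge\xi^s\wedge\tilde\eta\wedge\tilde\beta$ and consequently $\tilde\eta\in D'$, contradicting $D'\cap\mathrm{Span}(\xi^{s+1},\ldots,\xi^n)=\{0\}$. The main obstacle is this restriction-lifting step in (d), which requires carefully tracking that the restriction map $[V^*]^{\wedge(\px-s)}\to[D^*]^{\wedge(\px-s)}$ is an isomorphism on the subspace of forms supported on the complementary span; the remaining parts follow directly from (b) and Lemma~\ref{divpr}.
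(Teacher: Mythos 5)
Your proposal is correct and follows essentially the same route as the paper's proof: the identical basis expansion forced by $\,\xi^j\wedge\my=0\,$ for (b), Lemma~\ref{divpr} plus the determinant of the change of basis for (c), and the contradiction with $\,D'\cap\mathrm{Span}\hs(\xi^{s+1}\nnh,\dots,\hs\xi^n)=\{0\}\,$ for both $\,s\ne\px-1\,$ in (a) and for (d). The only detail left implicit is the indivisibility of $\,\zeta\,$ itself asserted in (b), which follows by the same argument you use for $\,s\ne\px-1$: any $1$-form $\,\wedge$-dividing $\,\zeta\,$ also divides $\,\my$, hence lies in $\,D'\nh$, and must then vanish because $\,\zeta\ne0\,$ is built exclusively from $\,\xi^{s+1}\nnh,\dots,\hs\xi^n\nh$.
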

\begin{proof}Given a basis $\,\xi^1\nnh,\dots,\hs\xi^n$ as in (b), 
$\,\my\,$ is a nonzero\hh-co\-ef\-fi\-cients linear combination of several
exterior products $\,\xi^{i_1}\nnh\wedge\ldots\wedge\hs\xi^{i_q}$ with
$\,i_1\w\nh<\ldots<i_q\w$. The equalities $\,\xi\hh^i\nnh\nh\wedge\my=0\,$
for $\,i=1,\dots,s\,$ amount to 
$\,\{1,\dots,s\}\subseteq\{i_1\w\nh,\ldots,i_q\w\}\,$ for each
$\,\xi^{i_1}\nnh\wedge\ldots\wedge\hs\xi^{i_q}$ present in the
combination, leading to the required decomposition
$\,\my=\xi^1\nnh\nh\wedge\ldots\wedge\hs\xi^s\nnh\wedge\zeta$, where
$\,\zeta\,$ arises by ``canceling'' the factor
$\,\xi^1\nnh\nh\wedge\ldots\wedge\hs\xi^s$ in each (nonzero) term of our
expansion of $\,\my$, so that $\,\zeta\,$ is ``built'' from 
$\,\xi^{s+1}\nnh,\dots,\hs\xi^n\nh$. Any $\,1$-form $\,\xi$, 
$\,\wedge$-di\-vid\-ing $\,\zeta$, also divides $\,\my$, and so
$\,\xi=a_1\w\xi^1\nh+\ldots+a_s\w\xi^s$ for some $\,a_1\w,\dots,a_s\w$. 
Since $\,\xi^1\nnh,\dots,\hs\xi^s$ are not present in $\,\zeta$, writing
$\,\xi\wedge\zeta=0\,$ we see that $\,a_1\w=\dots=a_s\w=0$, and (b) follows. 
So does (a): if our $\,\zeta\hs$ 
were {\it a\/ $\,1$\hn-form}, obviously $\,\wedge$-di\-vid\-ing $\,\my$, it
would have to be a linear combination of $\,\xi^1\nnh,\dots,\hs\xi^s$ rather
than being built from $\,\xi^{s+1}\nnh,\dots,\hs\xi^n\nh$.

Next, let $\,\my=\xi^1\nnh\nh\wedge\ldots\wedge\hs\xi^s\nnh\wedge\zeta
=\xi^1\nnh\nh\wedge\ldots\wedge\hs\xi^s\nnh\wedge\zeta'\nh$, as in (c).
Lemma~\ref{divpr} for $\,\eta=\zeta-\zeta'\nh$, combined with uniqueness 
of $\,\xi^1\nnh\nh\wedge\ldots\wedge\hs\xi^s$ up to a factor, yields (c).

For (d), choose $\,\zeta\,$ as in (b). The restrictions of 
$\,\xi^{s+1}\nnh,\dots,\hs\xi^n$ to $\,D\,$ form a basis of $\,D^*\nh$. 
Any linear combination of these restrictions, $\,\wedge$-di\-vid\-ing 
$\,\zeta$, thus $\,\wedge$-di\-vides $\,\my$, which makes it 
also a linear combination of $\,\xi^1\nnh,\dots,\hs\xi^s$, and hence zero.
\end{proof}
We will refer to the restriction to $\,D\,$ of the exterior
$\,(\px-\nh s)$-form $\,\zeta\,$ in part (c) of Lemma~\ref{divis} as an {\it
in\-di\-vis\-i\-ble factor\/} 
of the exterior $\,\px\hs$-form $\,\my\,$ in $\,V\nnh$, and to 
$\,\theta=\xi^1\nnh\nh\wedge\ldots\wedge\hs\xi^s$ in (c) as a {\it volume
factor\/} of $\,\my$. Since $\,(n-\nh s)-(\px-\nh s)=n-\px$,
\begin{equation}\label{cod}
\mathrm{an\ in\-di\-vis\-i\-ble\ factor\ of\ 
}\,\my\,\mathrm{\ has\ the\ same\ codegree\ in\ }\,D\,\mathrm{\ as\ 
}\,\my\,\mathrm{\ does\ in\ }\,V\nnh.
\end{equation}
Also, $\,\xi^1\nnh,\dots,\xi^s$ descend to a basis 
of $\,V\nnh/D$, and so
\begin{equation}\label{vol}
\theta\hs=\,\xi^1\nnh\nh\wedge\ldots\wedge\hs\xi^s\mathrm{\ descends\ to\ a\ volume\
form\ in\ }\,V\nnh/D\hh.
\end{equation}
When $\,s=\px\,$ (which is the de\-com\-pos\-able case in
Lemma~\ref{divis}(a)) we can obviously make $\,\zeta\,$ unique, by
setting $\,\zeta=1$.
\begin{remark}\label{replc}If $\,n=6\,$ and $\,\xi^1\nnh,\dots,\hs\xi\hh^6$
is a basis of $\,V\hn^*\nnh\nnh$, consider the equality
\begin{equation}\label{eql}
\begin{array}{l}
\xi^1\nnh\nh\wedge\hs\xi^2\hn\nnh\wedge\hs\xi^3\hs
+\,\hs\xi^3\nnh\nh\wedge\hs\xi\hs^4\nh\nnh\wedge\hs\xi\hh^5\hs
+\,\hs\xi\hh^5\nnh\nh\wedge\hs\xi\hh^6\hn\nnh\wedge\hs\xi^1\\
\hskip50.3pt=\,\hs\hat\xi^1\nnh\nh\wedge\hs\hat\xi^2\hn\nnh\wedge\hs\hat\xi^3\hs
+\,\hs\hat\xi^3\nnh\nh\wedge\hs\hat\xi\hs^4\nh\nnh\wedge\hs\hat\xi\hh^5\hs
+\,\hs\hat\xi\hh^5\nnh\nh\wedge\hs\hat\xi\hh^6\hn\nnh\wedge\hs\hat\xi^1
\end{array}
\end{equation}
for some $\,\hat\xi^1\nnh,\dots,\hs\hat\xi\hh^6\nh\in V\hn^*$ (which must
then form a basis since, according to Remark~\ref{imges}, the $\,3$-form on 
the left-hand side has rank six, while linear dependence
of $\,\hat\xi^1\nh,\dots,\hat\xi\hh^6$ would make the rank of the right-hand
side less than six, due to the original definition of rank at the beginning 
of this section). In such $\,\hat\xi^1\nnh,\dots,\hs\hat\xi\hh^6\nnh$,
\begin{enumerate}
\item[{\rm(a)}] $\hat\xi^1\nh,\hat\xi^3\nh,\hat\xi\hh^5$ can be any triple
with $\,\mathrm{Span}\hs(\hat\xi^1\nh,\hat\xi^3\nh,\hat\xi\hh^5)
=\hs\mathrm{Span}\hs(\xi^1\nnh,\hs\xi^3\nnh,\hs\xi\hh^5)$, or
\item[{\rm(b)}] $\hat\xi^2$ may be any nonzero $\,1$-form in 
$\,\hs\mathrm{Span}\hs(\xi^2\nnh,\hs\xi\hs^4\nnh,\hs\xi\hh^6)$.
\end{enumerate}
In fact, we get (a) by substituting for 
$\,\xi^1\nnh,\hs\xi^3\nnh,\hs\xi\hh^5$ in (\ref{eql}) arbitrary 
linearly independent linear combinations of 
$\,\hat\xi^1\nh,\hat\xi^3\nh,\hat\xi\hh^5$ and gathering terms which have 
the form $\,\hs\hat\xi^j\nnh\nh\wedge\ldots\wedge\hat\xi^k$ for 
$\,(j,k)\,$ equal to $\,(1,3)$, $\,(3,5)\,$ and $\,(5,1)$. To obtain (b),
note that the above process replaces $\,\xi^2$ with
$\,\hat\xi^2\nh=c_5\w\xi^2\nh+c_1\w\xi\hs^4\nh+c_3\w\xi\hh^6\nh$, where 
$\,(c_1\w,c_3\w,c_5\w)\in\rtr$ is the vector product of the first two rows 
of the $\,3\times3\,$ matrix $\,B\,$ satisfying the matrix equality
$\,[\hs\xi^1\ \ \xi^3\ \ \xi\hh^5]
=[\hs\hat\xi^1\ \ \hat\xi^3\ \ \hat\xi\hh^5]B$. Any prescribed nonzero
vector product is realized in this way by some nonsingular $\,3\times3\,$
matrix. 
\end{remark}
\begin{lemma}\label{prddc}Given an exterior\/ $\,3$-form\/ $\,\my\,$ in a 
six-di\-men\-sion\-al real vector space\/ $\,V\hs$ and a basis\/
$\,\xi^1\nnh,\dots,\hs\xi\hh^6$ of $\,V\hn^*\nnh$, let\/ $\,H'$ be the set of
all\/ $\,1$-forms\/ $\,\xi\in V\hn^*$ such that the\/ $\,4$-form\/
$\,\xi\wedge\my\,$ is de\-com\-pos\-able.
\begin{enumerate}
\item[(i)] If\/ $\,\my\hs
=\hs\xi^1\nnh\nh\wedge\hs\xi^2\hn\nnh\wedge\hs\xi^3\nh
+\hs\xi^3\nnh\nh\wedge\hs\xi\hs^4\nh\nnh\wedge\hs\xi\hh^5\nh
+\hs\xi\hh^5\nnh\nh\wedge\hs\xi\hh^6\hn\nnh\wedge\hs\xi^1\nnh$,
$\,H'$ equals the vector sub\-space of\/ $\,V\hn^*$ spanned by\/
$\,\xi^1\nnh,\hs\xi^3\nnh,\hs\xi\hh^5\nnh$.
\item[(ii)] When\/ $\,\my\hs
=\hs\xi^1\nnh\nh\wedge\hs\xi^2\hn\nnh\wedge\hs\xi^3\nh
+\hs\xi\hs^4\nnh\nh\wedge\hs\xi\hh^5\nh\nnh\wedge\hs\xi\hh^6\nnh$,
our\/ $\,H'$ is the set-the\-o\-ret\-i\-cal union of two vector 
sub\-spaces of\/ $\,V\hn^*\nnh$, the spans of\/
$\,\xi^1\nnh,\hs\xi^2\nnh,\hs\xi^3$ and\/ 
$\,\xi\hs^4\nnh,\hs\xi\hh^5\nnh,\hs\xi\hh^6\nnh$.
\end{enumerate}
\end{lemma}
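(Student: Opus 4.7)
The plan is to reduce both parts to the standard criterion that a $\,4$-form $\,\nu\,$ in a six-di\-men\-sion\-al vector space is de\-com\-pos\-able if and only if the $\,2$-form $\,T\nu\,$ obtained from $\,\nu\,$ via the Hodge-type iso\-mor\-phism $\,T\colon[V\hn^*]^{\wedge4}\to[V\hn^*]^{\wedge2}\,$ determined by $\,\xi^1\wedge\ldots\wedge\xi^6\,$ satisfies $\,T\nu\wedge T\nu=0$. Indeed, a $\,2$-form is de\-com\-pos\-able precisely when its wedge square vanishes, and $\,T\,$ preserves de\-com\-pos\-abil\-i\-ty because, for any basis $\,\tau_1,\ldots,\tau_6\,$ of $\,V\hn^*$, the $\,2$-form $\,T(\tau_1\wedge\ldots\wedge\tau_4)\,$ is a nonzero scalar multiple of $\,\tau_5\wedge\tau_6$.

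The forward inclusions are immediate. In (i), for $\,\xi=c_1\xi^1+c_3\xi^3+c_5\xi^5$, all terms of $\,\xi\wedge\my\,$ involving a repeated factor vanish, leaving
\begin{equation*}
\xi\wedge\my\,=\,-\,\xi^1\wedge\xi^3\wedge\xi^5\wedge(c_1\xi^4+c_3\xi^6+c_5\xi^2),
\end{equation*}
a product of four $\,1$-forms. In (ii), for $\,\xi\in\mathrm{Span}(\xi^1,\xi^2,\xi^3)\,$ the factor $\,\xi\wedge\xi^1\wedge\xi^2\wedge\xi^3\,$ vanishes, so $\,\xi\wedge\my=\xi\wedge\xi^4\wedge\xi^5\wedge\xi^6$, which is de\-com\-pos\-able; the other span is symmetric.

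For the reverse inclusion in (i), I would write $\,\xi=\sum_i a_i\xi^i\,$ and, after expanding $\,\xi\wedge\my\,$ into the nine basis $\,4$-forms that appear, apply $\,T\,$ to read off the nine coordinates of $\,T(\xi\wedge\my)$. The key observation is that three specific basis $\,4$-forms in $\,T(\xi\wedge\my)\wedge T(\xi\wedge\my)$, namely those indexed by $\,\{1,3,4,6\}$, $\,\{2,3,5,6\}\,$ and $\,\{1,2,4,5\}$, each receive a nonzero contribution from only one of the three $\,2+2\,$ partitions of their index set (respectively $\,\{1,6\}\sqcup\{3,4\}$, $\,\{2,3\}\sqcup\{5,6\}\,$ and $\,\{1,2\}\sqcup\{4,5\}$), and the resulting coefficients equal, up to sign, $\,2\hh a_2^2$, $\,2\hh a_4^2\,$ and $\,2\hh a_6^2$. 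Their simultaneous vanishing forces $\,a_2=a_4=a_6=0$, placing $\,\xi\,$ in $\,\mathrm{Span}(\xi^1,\xi^3,\xi^5)$.

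For the reverse inclusion in (ii), the natural splitting $\,\xi=\alpha+\beta\,$ with $\,\alpha\in\mathrm{Span}(\xi^1,\xi^2,\xi^3)\,$ and $\,\beta\in\mathrm{Span}(\xi^4,\xi^5,\xi^6)\,$ yields, by direct expansion,
\begin{equation*}
\xi\wedge\my\,=\,\alpha\wedge\xi^4\wedge\xi^5\wedge\xi^6\,+\,\beta\wedge\xi^1\wedge\xi^2\wedge\xi^3,
\end{equation*}
so $\,T(\xi\wedge\my)=\zeta_1+\zeta_2\,$ with $\,\zeta_1\,$ and $\,\zeta_2\,$ supported on complementary three-el\-e\-ment index sets. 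Each $\,\zeta_k\wedge\zeta_k\,$ is a $\,4$-form on a three-di\-men\-sion\-al span, hence zero, so $\,T(\xi\wedge\my)\wedge T(\xi\wedge\my)=2\hh\zeta_1\wedge\zeta_2$. The latter expands into nine linearly independent basis $\,4$-forms whose coefficients are products of one coordinate of $\,\zeta_1\,$ and one of $\,\zeta_2$; the vanishing of all these products forces $\,\zeta_1=0\,$ or $\,\zeta_2=0$, i.e.\ $\,\alpha=0\,$ or $\,\beta=0$, yielding the claim.

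The main obstacle is sign book\-keep\-ing through the iso\-mor\-phism $\,T\,$ and the combinatorics of the expansion of $\,T\nu\wedge T\nu\,$ in (i); however, the judicious choice of three basis $\,4$-forms, each isolating one of $\,a_2^2$, $\,a_4^2$, $\,a_6^2$, keeps the required computation to only a handful of coefficients.
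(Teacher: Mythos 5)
Your argument is correct, but it takes a genuinely different route from the paper's. You test decomposability of the $4$-form $\xi\wedge\my$ via the classical Pl\"ucker-type criterion: pass to the (coordinate) Hodge dual $2$-form and check whether its wedge square vanishes. The paper instead argues by rank: in (i) it first uses its Remark on rewriting the normal form to reduce to $\xi=\xi^2+a_1\xi^1+a_3\xi^3+a_5\xi^5$, then exhibits five linearly independent contractions $[\xi\wedge\my](\,\cdot\,,e_i,e_j,e_k)$, so that $\mathrm{rank}\,[\xi\wedge\my]\ge5$ while a decomposable $4$-form has rank $4$; in (ii) it rewrites $\xi\wedge\my$ as $\hat\xi^1\wedge\hat\xi^4\wedge(\hat\xi^5\wedge\hat\xi^6-\hat\xi^2\wedge\hat\xi^3)$ and quotes the remark that such forms are indecomposable. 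Your method is more computational but entirely self-contained and systematic (one uniform criterion for both parts), whereas the paper's leans on its already-developed rank and normal-form machinery and avoids the sign bookkeeping; I verified your three isolated coefficients in (i) (they come out as $-2a_2^2$, $2a_4^2$, $2a_6^2$ for the index sets $\{1,3,4,6\}$, $\{2,3,5,6\}$, $\{1,2,4,5\}$), and the complementary-support argument in (ii) is clean and correct.

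One small imprecision: your justification that $T$ preserves decomposability --- namely that $T(\tau_1\wedge\ldots\wedge\tau_4)$ is a nonzero multiple of $\tau_5\wedge\tau_6$ for \emph{any} basis $\tau_1,\ldots,\tau_6$ --- is false as stated, because your $T$ is the genuine Hodge star $[V^*]^{\wedge4}\to V^{\wedge2}$ composed with the basis-dependent identification $V\cong V^*$, which need not carry the common kernel of $\tau_1,\ldots,\tau_4$ onto $\mathrm{Span}(\tau_5,\tau_6)$. What is true, and all you actually need, is that the Hodge star sends decomposable $4$-forms to decomposable bivectors (their images being the corresponding $2$-dimensional kernels) and that any linear isomorphism $V\to V^*$ preserves decomposability of bivectors; since the same applies to $T^{-1}$, the criterion ``$\nu$ decomposable $\iff T\nu\wedge T\nu=0$'' stands. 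Rephrase that one sentence and the proof is complete.
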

\begin{proof}In (i), or (ii), 
$\,\xi\wedge\my\,$ is de\-com\-pos\-able, for a $\,1$-form 
$\,\xi=a\hn_i\w\hs\xi\hh^i\nh$, if and only if
$\,(a_2\w,a_4\w,a_6\w)=(0,0,0)\,$ or, respectively, 
one of $\,(a_1\w,a_2\w,a_3\w)$, $\,(a_4\w,a_5\w,a_6\w)\,$ equals $\,(0,0,0)$.
Namely, the `if' part is 
easily verified in both cases.

For the converse, in (i), let 
$\,(a_2\w,a_4\w,a_6\w)\ne(0,0,0)$. Remark~\ref{replc}(b) allows us to assume
that $\,\xi=\hs\xi^2\nh
+a\hn_1\w\hs\xi^1\nh+a\hn_3\w\hs\xi^3\nh+a\hn_5\w\hs\xi\hh^5\nnh$, and so 
\[
\xi\wedge\my=\hs\xi^2\nh\nnh\wedge\xi\hh^5\nnh\nh
\wedge(\xi^3\nnh\nh\wedge\hs\xi\hs^4\nh+\hs\xi\hh^6\nh\nnh\wedge\hs\xi^1)
+(a\hn_1\w\hs\xi\hs^4\nh+a\hn_3\w\hs\xi\hh^6\nh+a\hn_5\w\hs\xi^2)
\wedge\hs\xi^1\nnh\nh\wedge\hs\xi^3\hn\nnh\wedge\hs\xi\hh^5\nnh.
\]
In terms of the basis $\,e_1\w,\dots,e_6\w$ of $\,V\hs$ dual to our 
basis $\,\xi^1\nnh,\dots,\hs\xi\hh^6$ of $\,V\hn^*\nnh$, if we now set 
$\,\eta_{i\hn jk}\w=[\hs\xi\wedge\my](\,\cdot\,,e\nh_i\w,e\nnh_j\w,e\nh_k\w)$, 
(\ref{wdg}) will give  
$\,\eta\hh_{234}\w=\xi\hh^5\nnh$, $\,\eta\hh_{245}\w=\xi^3$ and 
$\,\eta\hh_{256}\w=\xi^1\nnh$, as well as
$\,\eta\hh_{354}\w=\xi^2\nh+a_1\w\hs\xi^1$ and 
$\,\eta\hh_{253}\w=\xi\hs^4\nh+a_5\w\hs\xi^1\nnh$. Thus, by (\ref{dim}),
$\,\mathrm{rank}\hskip2.7pt[\hs\xi\wedge\my]\ge5$ and 
$\,\hs\xi\wedge\my\,$ is not de\-com\-pos\-able: if it were, 
it would have rank four (Remark~\ref{imges}).

In (ii), let 
$\,\hat\xi^1\nh=a\hn_1\w\hs\xi^1\nh+a\hn_2\w\hs\xi^2\nh
+a\hn_3\w\hs\xi^3$ and $\,\hat\xi\hs^4\nh=a\hn_4\w\hs\xi\hs^4\nh
+a\hn_5\w\hs\xi\hh^5\nh+a\hn_6\w\hs\xi\hh^6$ be both nonzero, and choose
$\,\hat\xi\hh^i$ for $\,i\in\{2,3,5,6\}\,$ with
$\,\hs\xi^1\nnh\nh\wedge\hs\xi^2\hn\nnh\wedge\hs\xi^3\nh
=\hs\hat\xi^1\nnh\nh\wedge\hat\xi^2\hn\nnh\wedge\hat\xi^3$ and 
$\,\hs\xi\hs^4\nnh\nh\wedge\hs\xi\hh^5\nh\nnh\wedge\hs\xi\hh^6\nh
=\hs\hat\xi\hs^4\nnh\nh\wedge\hat\xi\hh^5\hn\nnh\wedge\hat\xi\hh^6\nh$.
Then  
$\,\xi\wedge\my=(\hat\xi^1\nnh+\hh\hat\xi\hs^4)\wedge\my
=\hs\hat\xi^1\nnh\nh\wedge\hat\xi\hs^4\nh
\wedge(\hat\xi\hh^5\nh\nnh\wedge\hat\xi\hh^6\hn
-\hs\hat\xi^2\hn\nnh\wedge\hat\xi^3)$ 
is not de\-com\-pos\-able as a consequence of Remark~\ref{nondc}.
\end{proof}

\section{The Hodge star duality}\label{hs}
\setcounter{equation}{0}
Again, $\,V\hh$ denotes a real vector space of dimension $\,n$.

We use the multiplicative notation $\,\my\beta=\beta\my\,$ for the natural
bi\-lin\-e\-ar pairing which associates with
$\,\my\in[V\hn^*]^{\wedge\px}$ and $\,\beta\in V^{\wedge\lx}$ the result of
contractions of $\,\my$ against $\,\beta\,$ involving the maximum possible
number of initial indices. Thus, $\,\my\beta\,$ is a $\,(\px-\lx)$-form if
$\,\px\ge\lx$, an $\,(\lx-\px)$-vec\-tor when $\,\lx\ge\px$, and hence a
scalar in the case $\,\lx=\px$, and, when 
$\,\beta=v_1\w\nh\wedge\ldots\wedge v_\lx\w$ and $\,\px\ge\lx\,$ (or, 
$\,\my=\xi^1\nnh\nh\wedge\ldots\wedge\hs\xi\hh^\px$ and $\,\lx\ge\px$), 
$\,\my\beta=\my(v_1\w,\dots,v_\lx\w,\,\cdot\,,\dots,\,\cdot\,)\,$ 
or, respectively,
$\,\my\beta=\beta\my
=\beta(\xi^1\nnh,\dots,\xi\hh^\px,\,\cdot\,,\dots,\,\cdot\,)$.
Note that $\,\my v\,$ is the interior product 
$\,\imath_v\w\my=\my(v,\,\cdot\,,\dots,\,\cdot\,)\,$ for $\,\px\ge1\,$ and
$\,v\in V\nnh$.

For an $\,\lx$-vec\-tor $\,\beta$, a $\,\px\hs$-form $\,\my\,$ and a  
$\,\px'\nh$-form $\,\my'$ one has
the associative law
\begin{equation}\label{law}
[\beta\my]\my'\hs=\,\beta[\my\wedge\my']\,\mathrm{\ if\
}\,\px+\px'\nh\le\lx\hh.
\end{equation}
which are obvious due to (\ref{cvt}) under the assumption that
\begin{equation}\label{dua}
\mathrm{the\ bases\ }\,v_1\w,\dots,v_n\w\mathrm{,\ of\
}\,V\hs\mathrm{\ and\ 
}\,\xi^1\nnh,\dots,\hs\xi^n\mathrm{\ of\ }\,V\hn^*\mathrm{\ are\ each\
other}\text{\rm'}\mathrm{s\ duals,}
\end{equation}
and $\,\beta=v_1\w\nh\wedge\ldots\wedge v_\lx\w$, while 
$\,\my=\xi^1\nnh\nh\wedge\ldots\wedge\hs\xi\hh^\px$ and
$\,\my'\nh=\xi\hh^{\px+1}\nnh\nh\wedge\ldots\wedge\hs\xi\hh^{\px+\px'}\nnh$.
By skew-sym\-me\-try, (\ref{law}) thus follows for $\,\beta,\my,\my'$ that are
all de\-com\-pos\-a\-ble into factors from the bases (\ref{dua}), and
the general case is immediate from tri\-lin\-e\-ar\-i\-ty.

We now fix a volume $\,n$-form $\,\omega\,$ in $\,V$ and the corresponding
reciprocal $\,n$-vec\-tor $\,\alpha$, with $\,\alpha\hh\omega=1$. By
(\ref{cvt}), in the case (\ref{dua}),
if $\,\alpha=v_1\w\nh\wedge\ldots\wedge v_n\w$, then 
$\,\omega=\xi^1\nnh\nh\wedge\ldots\wedge\hs\xi^n\nh$. 
The {\it Hodge star
iso\-mor\-phisms\/} $\,\my\mapsto*\my\,$ and $\,\beta\mapsto*\beta\,$ of 
$\,[V\hn^*]^{\wedge\px}$ onto $\,V^{\wedge(n-\px)}\nnh$, and vice versa,
depending on $\,\omega$, are then given by $\,*\my=\alpha\my\,$ and
$\,*\beta=\omega\beta$. By (\ref{law}),
\[
*[\xi^1\nnh\nh\wedge\ldots\wedge\hs\xi\hh^\px]
=[v_1\w\nh\wedge\ldots\wedge v_n\w]
[\hh\xi^1\nnh\nh\wedge\ldots\wedge\hs\xi\hh^\px]
=v_{\px+1}\w\nh\wedge\ldots\wedge v_n\w
\]
with (\ref{dua}): 
both sides agree on any $\,(n-\px)$-tuple 
$\,\xi\hh^{i_{\px+1}}\nnh\nh\wedge\ldots\wedge\hs\xi\hh^{i_n}$ with 
$\,i_{\px+1}\w\nh<\ldots<i_n\w$ (as they equal $\,1\,$ for
$\,(i_{\px+1}\w,\dots,i_n\w)=(\px+1,\dots,n)\,$ and $\,0\,$ otherwise). 
Evenly permuting the $\,\xi\hh^i$ and,
simultaneously, the $\,v_j\w$, we now get, in the case (\ref{dua}),
\begin{equation}\label{hds}
*[\hh\xi^{i_1}\nnh\wedge\ldots\wedge\hs\xi^{i_\px}]
=v_{i_{\px+1}}\w\nh\wedge\ldots\wedge v_{i_n}\w\hh,\quad
*[v_{i_1}\w\nh\wedge\ldots\wedge v_{i_\lx}\w]
=\xi^{i_{\lx+1}}\nnh\nh\wedge\ldots\wedge\hs\xi^{i_n}
\end{equation}
whenever $\,i_1\w,\dots,i_n\w$ is an even permutation of $\,1,\dots,n$, the
second equality immediate from the first when one switches the roles of
$\,V\hs$ and $\,V\hn^*\nnh$. Hence 
\begin{equation}\label{inv}
\begin{array}{l}
*:V^{\wedge(n-\px)}\nh\to [V\hn^*]^{\wedge\px}\mathrm{\hs\ \ equals\ \ 
}\,(-\nh1)^{(n-\px)\px}\\
\mathrm{times\ the\ inverse\ of\
}*:[V\hn^*]^{\wedge\px}\nh\to V^{\wedge(n-\px)}\nh.
\end{array}
\end{equation}
With the volume $\,n$-form $\,\omega\,$ still fixed, let $\,\beta=*\my$. Then
\begin{equation}\label{img}
\begin{array}{l}
\mathrm{the\ image\ of\ }\hs\beta\hs\mathrm{\ is\ the\ divisibility\ space\
of\ }\hs\my\mathrm{,\nh\ and}\\
\mathrm{the\ divisibility\ space\ of\ }\hs\beta\hs\mathrm{\ equals\ the\ kernel\ of\ }\,\my,
\end{array}
\end{equation}
the two spaces associated with an $\,\lx$-vec\-tor $\,\beta\,$ being defined
in the obvious way:
$\,\{\beta\my'\nh:\my'\in[V\hn^*]^{\wedge(\lx-1)}\}\,$
and $\,\{v\in V:v\wedge\beta=0\}$. In fact, obviously,
\begin{equation}\label{imk}
\mathrm{the\ image\ of\ }\,\beta\,\mathrm{\ is\ polar\ to\ its\ kernel\
}\,\{\xi\in V\hn^*:\beta(\xi,\,\cdot\,,\dots,\,\cdot\,)=0\}\hh.
\end{equation}
Now (\ref{law}) for $\,\alpha\,$ rather than $\,\beta\,$ and
$\,\px'\nh=1\,$ yields the first line of (\ref{img}) by showing that the
two spaces in question have the same polar space, and the first line then
clearly follows if one switches $\,V\hs$ with $\,V\hn^*\nnh$.

If a $\,2$-form $\,\sy\hs$ in $\,V\hs$ is 
nondegenerate ($\mathrm{Ker}\,\sy\nh=\{0\}$), and so $\,n=2m\,$ is even,
\begin{equation}\label{dfd}
\begin{array}{l}
\mathrm{we\ define\ \text{\it the\ dual\/}\ of\ }\,\sy\hs\mathrm{\ to\ be\
the\ }\,(n-2)\hyp\mathrm{form\ }\,\my=\omega\beta\,\mathrm{\ for\ the\
vol}\hyp\\
\mathrm{ume\ form\ }\,\omega=(m\hh!)^{-\nh1}\sy\hh^{\wedge m}\nh\mathrm{,\
where\ }\,\beta\,\mathrm{\ is\ the\ bi\-vec\-tor\ reciprocal\ to\ }\,\sy.
\end{array}
\end{equation}
Thus, under the assumption (\ref{bas}),
\begin{equation}\label{mds}
\begin{array}{l}
\mathrm{the\ dual\ of\ }\,\sy\,=\,\sy\nh_1\w+\ldots+\,\sy\nnh_m\w\mathrm{\,\
with\ }\hs\,\sy\nnh_i\w\nh=\hs\xi^{2i-1}\nh\wedge\,\xi^{2i}\,\mathrm{\
equals}\\
\my=\my_1\w+\ldots+\my_m\w\hs\mathrm{\ for\ }\,\my_i\w
=-\sy\nh_1\w\wedge\ldots\wedge\sy\nnh_{i-1}\w\nnh\wedge\sy\nnh_{i+1}\w\nnh
\wedge\ldots\wedge\sy\nnh_m\w.
\end{array}
\end{equation}
In fact, the bi\-vec\-tor 
reciprocal to $\,\sy\,$ is $\,\beta
=-e_1\w\wedge e_2\w-\ldots-e_{n-1}\w\wedge e_n\w$, so that
$\,\omega\beta=\my\,$ for 
$\,\omega=(m\hh!)^{-\nh1}\sy\hh^{\wedge m}\nh
=\xi^1\nnh\nh\wedge\ldots\wedge\hs\xi^n\nh$, as 
$\,\omega[\hs e_{2i-1}\w\wedge e\hn_i\w]=-\my_i\w$. By (\ref{mds}),
\begin{equation}\label{led}
\mathrm{the\ dual\ of\ }\,\sy\hs\mathrm{\ equals\ }-\nnh\nnh1\,\mathrm{\ if\ 
}\,n=2\,\mathrm{\ and \ }-\nnh\sy\hs\mathrm{\ when\ }\,n=4\hh.
\end{equation}
\begin{remark}\label{evenq}Let $\,\my\,$ be a nonzero exterior $\,(n-2)$-form 
in an $\,n$-di\-men\-sion\-al real vector space $\,V\nnh$. For the kernel 
$\,Z\,$ and divisibility space $\,D\,$ of $\,\my$,
(\ref{prn}) and \hbox{(\ref{rnk}-a)} give $\,k=\dim Z\in\{0,2\}$, whereas 
$\,q=\dim D\,$ is even and $\,2\le q\le n\,$ as a consequence of (\ref{img})
and Lemma~\ref{divis}(a) for $\,\px=n-2\,$ and $\,s=n-q$.
\end{remark}
\begin{lemma}\label{duali}Any\/ $\,(n-2)$-form in\/ $\,V\nnh$, with\/ 
$\,n\hs=\hs2m$, dual to a nondegenerate \hbox{$\,2$-form}, is
in\-di\-vis\-i\-ble, as in\/ {\rm(\ref{ind})}. 
Conversely, any in\-di\-vis\-i\-ble\/ $\,(n-2)$-form\/ $\,\my\,$ is equal or
opposite to the dual of a nondegenerate\/ $\,2$-form\/ $\,\sy\nh$, and\/
$\,\my\,$ determines such\/ $\,\sy\hn$ uniquely up to a sign. For even\/ $\,m$,
the phrases `or opposite to\nnh' and \hbox{`up to a sign\nh'} may be deleted.
Furthermore, $\,\pm\hh\sy\hs$ has an explicit expression in terms of\/ $\,\my$.
\end{lemma}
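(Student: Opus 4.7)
The plan is to handle the forward implication, then the converse with its sign-and-uniqueness analysis, and finally the explicit formula.

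The forward direction follows at once from (\ref{img}). If $\,\my\,$ is the dual of a nondegenerate 2-form $\,\sy$, then the reciprocal bivector $\,\beta\,$ appearing in (\ref{dfd}) is itself nondegenerate, so its image equals $\,V$; by (\ref{img}) this image is the divisibility space of $\,\my$, which is therefore all of $\,V$, making $\,\my\,$ indivisible per (\ref{ind}).

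For the converse, given an indivisible $\,\my$, I would fix an auxiliary volume form $\,\omega_0\,$ with reciprocal $\,n$-vector $\,\alpha_0\,$ and set $\,\beta_0=\alpha_0\my=*\my$, so that $\,\my=\omega_0\beta_0\,$ by (\ref{inv}). Indivisibility of $\,\my\,$ combined with (\ref{img}) forces the image of $\,\beta_0\,$ to be $\,V$, hence $\,\beta_0\,$ is nondegenerate as a bivector; let $\,\sy_0\,$ be its reciprocal 2-form. By (\ref{dfd}) the dual of $\,\sy_0\,$ equals $\,(m!)^{-1}\sy_0^{\wedge m}\,\beta_0=c\my$, where the nonzero scalar $\,c\,$ is defined by $\,(m!)^{-1}\sy_0^{\wedge m}=c\hh\omega_0$. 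Passing from $\,\sy_0\,$ to $\,\sy=\lambda\sy_0\,$ multiplies the reciprocal bivector by $\,\lambda^{-1}\,$ and the top wedge power by $\,\lambda^m$, so the dual of $\,\lambda\sy_0\,$ becomes $\,c\hh\lambda^{m-1}\my$. Solving $\,c\hh\lambda^{m-1}=\pm 1\,$ for real $\,\lambda$: when $\,m\,$ is even (and $m-1$ odd), $\,\lambda^{m-1}=1/c\,$ has a unique real solution, giving a unique $\,\sy\,$ with dual equal to $\,\my\,$ and no sign ambiguity; when $\,m\,$ is odd (and $m-1$ even), exactly one of $\,\pm 1/c\,$ is positive, yielding $\,\lambda\,$ unique up to sign, so $\,\sy\,$ unique up to sign and with dual equal to $\,\pm\my$. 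A brief check shows that $\,\omega_0\to t\hh\omega_0\,$ rescales $\,\beta_0,\sy_0,c\,$ compatibly, so that the resulting $\,\sy\,$ is independent of the choice of $\,\omega_0$.

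For the explicit expression, expand $\,\sy^{\wedge(m-1)}\,$ in a basis putting $\,\sy\,$ in the Darboux normal form $\,\sy=\sy_1+\ldots+\sy_m\,$ with $\,\sy_i=\xi^{2i-1}\wedge\xi^{2i}\,$ of (\ref{mds}): since $\,\sy_i\wedge\sy_i=0$, the multinomial expansion yields $\,\sy^{\wedge(m-1)}=(m-1)!\sum_i\sy_1\wedge\ldots\wedge\widehat{\sy_i}\wedge\ldots\wedge\sy_m$, which by (\ref{mds}) equals $\,-(m-1)!\,\my$. Hence $\,\pm\sy\,$ is characterized explicitly by the polynomial identity $\,\sy^{\wedge(m-1)}=-(m-1)!\,\my$. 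The main obstacle is the sign-and-uniqueness analysis above, where the parity of $\,m\,$ dictates whether the dual map on nondegenerate 2-forms is injective outright or only injective up to sign; the remaining steps are routine manipulations of the Hodge star duality developed in Section~\ref{hs}.
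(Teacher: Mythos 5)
Your argument is correct and follows essentially the same route as the paper's: both proofs rest on the fact that, by (\ref{img}), indivisibility of $\,\my\,$ amounts to nondegeneracy of the bivector $\,*\my$, followed by a one-parameter normalization whose residual sign ambiguity is governed by the parity of $\,m-1\,$ (the paper rescales the volume form $\,\omega$, you rescale $\,\sy_0\w$; the two computations are identical). Two remarks. First, for the uniqueness claim you should state explicitly that \emph{any} nondegenerate $\,\sy\,$ whose dual is $\,\pm\my\,$ must be a scalar multiple of your $\,\sy_0\w$ — your scaling analysis as written only compares points on the ray $\,\bbR\hh\sy_0\w$. This is immediate from the same appeal to (\ref{inv}) you already make: if $\,\my=\omega\beta\,$ with $\,\omega=(m\hh!)^{-1}\sy^{\wedge m}\nh$, then $\,\beta=*\my\,$ relative to that $\,\omega$, and since all volume forms are proportional, $\,\beta\,$ is proportional to $\,\beta_0\w$; the paper records this step explicitly, and your uniqueness (and the completeness of your implicit characterization below) depends on it. Second, your identity $\,\sy^{\wedge(m-1)}=-(m-1)!\,\my$, obtained from the multinomial expansion in Darboux form together with (\ref{mds}), is correct and is arguably a cleaner realization of the final ``explicit expression'' clause than the paper's description of $\,\pm\hh\sy\,$ as the reciprocal of a suitably normalized $\,*\my$, though it determines $\,\sy\,$ only implicitly. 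The forward implication via (\ref{img}) is also fine, and slightly slicker than the paper's direct basis computation.
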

\begin{proof}With the index ranges $\,i=1,\dots,m\,$ and $\,k=1,\dots,n=2m$, 
let $\,\my\,$ be dual to $\,\sy$. For 
$\,\xi=a\hn_k\w\hs\xi\hh^k\nh\in V\hn^*$ and 
$\,\theta\hn_k\w=
\xi^1\nnh\wedge\ldots\wedge\hs\xi^{k-1}\nnh\wedge\hs\xi^{k+1}\nnh\wedge
\ldots\wedge\hs\xi^n\nh$, where we use (\ref{mds}), $\,\xi\wedge\my\,$ 
equals the combination of $\,\theta\hn_{2i}\w$ and
$\,\theta\hn_{2i-1}\w$ with the coefficients $\,a_{2i-1}\w$ and $\,a_{2i}\w$.
Linear independence of all $\,\theta\hn_k\w$ now gives $\,\xi=0\,$
whenever $\,\xi\wedge\my=0$, proving our first claim. Also,
$\,\my\,$ uniquely determines the reciprocal $\,\beta\,$ of $\,\sy$,
and hence $\,\sy\hs$ itself, up to a nonzero scalar factor: 
if $\,\my=\omega\beta=*\beta\,$ for {\it any\/} volume form $\,\omega$,
(\ref{inv}) with $\,\px=2\,$ gives $\,\beta=*\my$.

Conversely, 
let $\,\my\in[V\hn^*]^{\wedge(n-2)}$ be in\-di\-vis\-i\-ble. 
With $\,\omega\,$ and $\,\alpha\,$ as in the lines preceding (\ref{dua}), 
$\,\alpha\my\,$ is -- by (\ref{img}) -- a nondegenerate 
bi\-vec\-tor, and so it has a reciprocal nondegenerate $\,2$-form
$\,\lambda$. Hence 
$\,(m\hh!)^{-\nh1}\lambda\nh^{\wedge m}$ equals $\,\kappa\hskip1pt\omega$
for some $\,\kappa\in\bbR$. Replacing $\,\omega\,$ with
$\,\widetilde\omega=c\hs\omega\,$ 
leads to $\,\widetilde\alpha\,$ and $\,\widetilde\lambda\,$ equal, 
respectively, to $\,c^{\nh-1}\alpha\,$ and $\,c\lambda$, so that
$\,(m\hh!)^{-\nh1}\widetilde\lambda\nh^{\wedge m}\nh
=\widetilde\kappa\hskip1.2pt\widetilde\omega$, where
$\,\widetilde\kappa=c\hh^{m-1}\nh\kappa$. Some
choice of $\,c$, unique up to a sign, now gives $\,|\widetilde\kappa|=1\,$
and, if $\,m\,$ is even, a unique $\,c\,$ yields $\,\widetilde\kappa=1$.
The $\,2$-form $\,\sy\hn=\widetilde\lambda=c\lambda$, for this $\,c$, has
the reciprocal bi\-vec\-tor $\,\beta=c^{\nh-1}\alpha\my\,$ and
$\,(m\hh!)^{-\nh1}\sy\hh^{\wedge m}\nh=\pm\hs\widetilde\omega
=\pm\hs c\hs\omega$. Thus, by (\ref{inv}), 
$\,\pm\hs\widetilde\omega\beta=\pm\hs\omega\hh\alpha\my
=\pm\my\,$ with the required sign $\,\pm\hh$.
\end{proof}
By Lemma~\ref{duali}, in\-di\-vis\-i\-ble $\,(n-2)$-forms exist only in even
dimensions $\,n$.

We use the term {\it duality\/} for the natural bijective 
correspondence, established in Lemma~\ref{duali}, for even dimensions $\,n$,
between nondegenerate 
exterior $\,2$-forms $\,\sy$ and in\-di\-vis\-i\-ble $\,(n-2)$-forms
$\,\my$, with both $\,\sy\nh,\my\,$ only defined up to a sign.

\section{Exterior $\,3$-forms in dimension six}\label{ds}
\setcounter{equation}{0}
The algebraic classification of exterior $\,3$-forms $\,\my\,$ in a
six-di\-men\-sion\-al real vector space $\,V\nnh$, 
known since Reichel's 1907 thesis \cite{reichel}, is copied here, with minor 
changes, from 
Bryant's paper \cite[p.\,599]{bryant}: 
the possible (nonzero) types 
appear as 
\begin{equation}\label{six}
\begin{array}{rl}
\mathrm{a)}&\my\hs\,=\,\hs\xi^1\nnh\nh\wedge\hs\xi^2\hn\nnh\wedge\hs\xi^3\hs
+\,\hs\xi^3\nnh\nh\wedge\hs\xi\hs^4\nh\nnh\wedge\hs\xi\hh^5\hs
+\,\hs\xi\hh^5\nnh\nh\wedge\hs\xi\hh^6\hn\nnh\wedge\hs\xi^1\hs
+\,\hs\xi^2\nnh\nh\wedge\hs\xi\hs^4\nh\nnh\wedge\hs\xi\hh^6\hn,\\
\mathrm{b)}&\my\hs\,=\,\hs\xi^1\nnh\nh\wedge\hs\xi^2\hn\nnh\wedge\hs\xi^3\hs
+\,\hs\xi^3\nnh\nh\wedge\hs\xi\hs^4\nh\nnh\wedge\hs\xi\hh^5\hs
+\,\hs\xi\hh^5\nnh\nh\wedge\hs\xi\hh^6\hn\nnh\wedge\hs\xi^1\hn,\\
\mathrm{c)}&\my\hs\,=\,\hs\xi^1\nnh\nh\wedge\hs\xi^2\hn\nnh\wedge\hs\xi^3\hs
+\,\hs\xi\hs^4\nnh\nh\wedge\hs\xi\hh^5\nh\nnh\wedge\hs\xi\hh^6\hn,\\
\mathrm{d)}&\my\hs\,=\,\hs\xi^1\nnh\nh\wedge\hs\xi^2\hn\nnh\wedge\hs\xi^3\hs
+\,\hs\xi\hs^4\nnh\nh\wedge\hs\xi\hh^5\nh\nnh\wedge\hs\xi^1\hn,\\
\mathrm{e)}&\my\hs\,=\,\hs\xi^1\nnh\nh\wedge\hs\xi^2\hn\nnh\wedge\hs\xi^3
\end{array}
\end{equation}
in some basis $\,\xi^1\nnh,\dots,\hs\xi\hh^6$ of $\,V\hn^*\nnh$, dual to a
basis $\,e_1\w,\dots,e_6\w$ of $\,V\nnh$. 
Following Hitchin \cite[pp.\,551-552]{hitchin}, we call 
(\ref{six}-a) and (\ref{six}-c) the {\it com\-plex$\hs/\nh$real stable
cases}.

The five types (\ref{six}) are illustrated by the diagrams on p.\,11.

Each of the first four types (\ref{six}) has an associated 
pair of invariants:
\begin{equation}\label{fiv}
\mathrm{a)}\hskip3pt\pm J\,\mathrm{\ and\
}\,\my(J\hs\cdot\,,\,\cdot\,,\,\cdot\,)\hh,\hskip8pt
\mathrm{b)}\hskip3ptH\,\mathrm{\ and\ }\,\varTheta\hh,\hskip8pt
\mathrm{c)}\hskip3ptH\hn^\pm\mathrm{\ and\ }\,\eta^\pm\nh,\hskip8pt
\mathrm{d)}\hskip3ptD\,\mathrm{\ and\ }\,\bbR\hh\zeta\hh,
\end{equation}
reflecting their stabilizer groups; see the exposition by Bryant
\cite[p.\,602, 
Remark 31]{bryant}. For the reader's convenience, our presentation of 
(\ref{fiv}) is self-con\-tain\-ed. The invariant character of the objects in
question is, in each case, due to their being uniquely determined by
$\,\my$.

In the complex stable case (\ref{six}-a), as pointed out 
by Hitchin \cite[p.\,552]{hitchin}, 
\begin{equation}\label{rlp}
\my\,\,\mathrm{\ equals\ the\ real\ part\ of\ }\,\,\omega\,=\,
(\xi\hs^4\nh+\,i\hs\xi^1)\wedge(\xi\hh^6\nh+\,i\hs\xi^3)\wedge(\xi^2\nh+\,i\hs\xi\hh^5).
\end{equation}
Our basis $\,\xi^1\nnh,\dots,\hs\xi\hh^6$ of $\,V\hn^*$ is dual to a basis 
$\,e_1\w,\dots,e_6\w$ of $\,V\nnh$. Setting
\begin{equation}\label{acs}
J\hn e\hn_4\w=\,e_1\w\hh,\qquad J\hn e_6\w=\,e_3\w\hh,\qquad J\hn e_2\w
=\,e_5\w\hh,
\end{equation}
we define a com\-plex-struc\-ture tensor $\,J:V\nh\to V\nnh$, making the
factor complex $\,1$-forms 
in (\ref{rlp}) com\-plex-lin\-e\-ar, so that $\,\wedge\,$ in
(\ref{rlp}) is also the complex exterior product,
\begin{equation}\label{tri}
\begin{array}{l}
\omega\,\mathrm{\ in\ (\ref{rlp})\ is\ com\-plex}\hyp\mathrm{tri\-lin\-e\-ar,\ 
and\ so\ }\,\my(J\hs\cdot\,,J\hs\cdot\,,\,\cdot\,)=-\my\hh,\\
\mathrm{which\ in\ turn\ implies\ total\ skew}\hyp\mathrm{sym\-me\-try\ of\ 
}\,\hs\my(J\hs\cdot\,,\,\cdot\,,\,\cdot\,)\hh.
\end{array}
\end{equation}
As shown by Bryant \cite[p.\,596]{bryant},
\begin{equation}\label{unq}
\pm J\,\mathrm{\ are\ the\ only\ com\-plex}\hyp\mathrm{struc\-ture\ tensors\ 
with\ }\,\my(J\hs\cdot\,,J\hs\cdot\,,\,\cdot\,)=-\my\hh.
\end{equation}
We now justify (\ref{unq}). Let 
$\,\my_k\w=\my(e\hn_k\w,\,\cdot\,,\,\cdot\,)\,$ and
$\,\xi^{jk}\nh=\xi^j\nnh\nh\wedge\hs\xi^k\nh$. From (\ref{six}-a), 
\begin{equation}\label{moe}
\my_1\w=\hs\xi^{23}\nh+\hs\,\xi\hh^{56}\nh,\quad
\my_2\w=\hs\xi^{31}\nh+\hs\,\xi^{46}\nh,\quad
\my_4\w=\hs\xi\hh^{53}\nh+\hs\,\xi^{62}\nh,\quad
\my_5\w=\hs\xi^{34}\nh+\hs\,\xi^{61}\nh.
\end{equation}
{\it Assuming just that\/} $\,\my(J\hs\cdot\,,J\hs\cdot\,,\,\cdot\,)=-\my$,
we get $\,\my(\,\cdot\,,J\hs\cdot\,,\,\cdot\,)
=\my(J\hs\cdot\,,\,\cdot\,,\,\cdot\,)$. Thus,
\begin{equation}\label{myj}
\my(e_2\w,J\hn e_1\w,\,\cdot\,)\,=\,\my(J\hn e_2\w,e_1\w,\,\cdot\,)\hh,\qquad
\my(e\hn_4\w,J\hn e_2\w,\,\cdot\,)\,=\,\my(J\hn e\hn_4\w,e_2\w,\,\cdot\,)\hh,
\end{equation}
and -- for the same reason -- as $\,\my\,$ is skew-sym\-met\-ric,
$\,\my(Jv,v,\,\cdot\,)=0\,$ for any vector field $\,v$. Applied to the
vector fields $\,e_1\w,\dots,e_6\w$, this implies that each
$\,J\hn e_k\w$ is a section of $\,\mathrm{Ker}\,\my_k\w$, and so, by
(\ref{moe}), (\ref{dim}) and Remark~\ref{imges}, 
the spans of $\,\{e_1\w,e\hn_4\w\}\,$ and 
$\,\{e_2\w,e_5\w\}\,$ are $\,J\hn$-in\-var\-i\-ant. Writing
$\,J\hn e_1\w=a\hs e_1\w+c\hs e\hn_4\w$,
$\,J\hn e\hn_4\w=\widetilde c\hs e_1\w+\widetilde a\hs e\hn_4\w$,
and $\,J\hn e_2\w=a_{22}\w e_2\w+a_{25}\w e_5\w$, then using (\ref{moe})
and (\ref{myj}), we obtain
\[
-a\hs\xi^3\nh+c\hs\xi\hh^6\nh=-a_{22}\w\hs\xi^3\nh-a_{25}\w\hs\xi\hh^6\nh,\qquad
-a_{22}\w\hs\xi\hh^6\nh+a_{25}\w\hs\xi^3\nh
=\widetilde c\hs\xi^3\nh-\widetilde a\hs\xi\hh^6\nh.
\]
Thus,
$\,(\widetilde a,\widetilde c)=(a,-c)\,$ and the matrix of $\,J\,$
in $\,\mathrm{Span}\hs(e_1\w,e\hn_4\w)\,$ has the rows $\,(a,-c)$,
$\,(c,a)$, making it conjugate to the multiplication by $\,a+c\hh i\,$ in
$\,\bbC$, so that $\,(a,c)=(0,\mp1)\,$ and 
$\,J\hn e\hn_4\w\nh=\pm\hs e_1\w$. As (\ref{six}-a) is invariant under
simultaneous cyclic permutations of $\,(1,3,5)\,$ and $\,(2,4,6)$,
$\,J\hn e_6\w\nh=\pm\hs e_3\w$ and $\,J\hn e_2\w\nh=\pm\hs e_5\w$. The
three $\,\pm\,$ signs must all be the same, since the last displayed formula,
with $\,(a,c,\widetilde a,\widetilde c)=(0,\mp1,0,\pm1)$ and (therefore)
$\,a_{22}\w=0$, gives $\,a_{25}\w=-c=\pm1$. The third sign falls in line 
due to the aforementioned cy\-clic-per\-mu\-ta\-tion invariance, implying 
(\ref{unq}).

Next, (\ref{six}-b) implies that, by Lemma~\ref{prddc}(i), 
the sub\-space
$\,H'\nh=\mathrm{Span}\hs(\xi^1\nnh,\hs\xi^3\nnh,\hs\xi\hh^5)$ of 
$\,V\hn^*$ equals
$\,\{\xi\in V\hn^*:\xi\wedge\my\,\mathrm{\ is\ de\-com\-pos\-able}\}$.
With $\,e_1\w,\dots,e_6\w$ dual to $\,\xi^1\nnh,\dots,\hs\xi\hh^6$ as before, 
we define $\,H=\mathrm{Span}\hs(e_2\w,e\hn_4\w,e_6\w)\subseteq V\hs$ to be 
the polar space of $\,H'\nh$. 
Setting $\,\varTheta v=\my(v,\,\cdot\,,\,\cdot\,)\,$ we now obtain 
a linear iso\-mor\-phism $\,\varTheta:H\to[H'\hh]^{\wedge2}\nnh$. 
In fact, $\,e_2\w,e\hn_4\w,e_6\w$ form 
a basis of $\,H$, while, by (\ref{six}-b),
\begin{equation}\label{the}
\varTheta e_2\w=\hs\xi^3\nnh\nnh\wedge\hs\xi^1\nh,\qquad
\varTheta e\hn_4\w=\hs\xi\hh^5\nnh\nnh\wedge\hs\xi^3\nh\qquad
\varTheta e_6\w=\hs\xi^1\nnh\nnh\wedge\hs\xi\hh^5\nh.
\end{equation}
Suppose next that we have (\ref{six}-c). By Lemma~\ref{prddc}(ii), 
the set of all $\,1$-forms $\,\xi\,$ such that $\,\xi\wedge\my\,$ is 
de\-com\-pos\-able is the union of the sub\-spaces
$\,\mathrm{Span}\hs(\xi^1\nnh,\hs\xi^2\nnh,\hs\xi^3)\,$ and
$\,\mathrm{Span}\hs(\xi\hs^4\nnh,\hs\xi\hh^5\nnh,\hs\xi\hh^6)\,$ of 
$\,V\hn^*$. The resulting unordered pair $\,\{H\hn^+\nnh\nh,\,H\hn^-\}\,$ of
their polar spaces in $\,V\hs$ is therefore uniquely determined by 
$\,\my\,$ and, consequently, so is the unordered pair
$\,\{\eta^+\nnh,\eta^-\}\,$ of the $\,3$-forms 
$\,\eta^+\nh=\hs\xi^1\nnh\nh\wedge\hs\xi^2\hn\nnh\wedge\hs\xi^3$ and 
$\,\eta^-\nh=\hs\xi\hs^4\nnh\nh\wedge\hs\xi\hh^5\nh\nnh\wedge\hs\xi\hh^6\nh$.
Note that $\,H^+\nh=\mathrm{Span}\hs(e\hn_4\w,e_5\w,e_6\w)\,$ and
$\,H^-\nh=\mathrm{Span}\hs(e_1\w,e_2\w,e_3\w)$.

In the case (\ref{six}-d), $\,\my\,$ is not de\-com\-pos\-able
(Remark~\ref{nondc}). 
The vector sub\-space $\,D\hh'$ of $\,V\hn^*\nh$, polar to the divisibility
space $\,D\,$ of $\,\my$, is thus the line spanned by $\,\xi^1\nh$, as 
Lemma~\ref{divis}(a), with $\,\px=3\,$ and $\,s\ge1$, gives $\,s=1$. By
(\ref{six}-d), Lemma~\ref{divis}(c) now applies to $\,s=1$ and
$\,\zeta=\hs\xi^2\hn\nnh\wedge\hs\xi^3\nnh+\,\xi\hs^4\nh\nnh\wedge\hs\xi\hh^5\nh$, 
turning $\,\zeta\in[D^*]^{\wedge2}$ into an in\-di\-vis\-i\-ble factor
of $\,\my$, 
defined (up to multiplications by scalars) as in the lines preceding
(\ref{vol}).
\begin{figure}[H]
\centering
\input{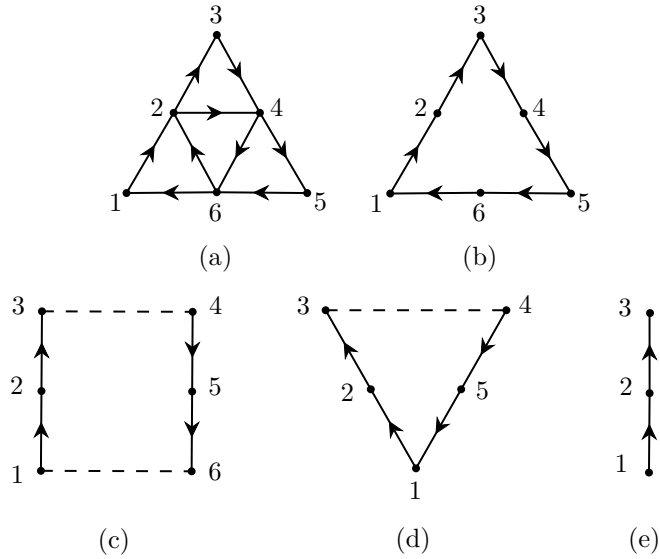}
\vspace{1em}
\caption{The five types (\ref{six}) of $3$-forms in dimension six. Each maximal
solid line segment corresponds to one summand in (\ref{six}), and so does
the small inscribed \vbox{\hbox{$\bigtriangledown$}\vskip-.2pt} triangle in
(a). They are all oriented as indicated by the arrows.}
\label{fig:3-forms-dim-6}
\end{figure}
\vskip-34pt
\begin{figure}[H]
  \centering
  \tikzset{every picture/.style={line width=0.75pt}} 

\begin{tikzpicture}[x=0.75pt,y=0.75pt,yscale=-1,xscale=1]

\draw  [dash pattern={on 4.5pt off 4.5pt}]  (136.5,230.25) -- (123.8,159.4) ;
\draw [shift={(129.27,189.9)}, rotate = 79.84] [fill={rgb, 255:red, 0; green, 0; blue, 0 }  ][line width=0.08]  [draw opacity=0] (10.72,-5.15) -- (0,0) -- (10.72,5.15) -- (7.12,0) -- cycle    ;
\draw  [dash pattern={on 4.5pt off 4.5pt}]  (137.25,234.25) -- (143.4,268.9) ;
\draw  [fill={rgb, 255:red, 0; green, 0; blue, 0 }  ,fill opacity=1 ] (122.3,159.4) .. controls (122.3,158.57) and (122.97,157.9) .. (123.8,157.9) .. controls (124.63,157.9) and (125.3,158.57) .. (125.3,159.4) .. controls (125.3,160.23) and (124.63,160.9) .. (123.8,160.9) .. controls (122.97,160.9) and (122.3,160.23) .. (122.3,159.4) -- cycle ;
\draw  [fill={rgb, 255:red, 0; green, 0; blue, 0 }  ,fill opacity=1 ] (141.5,103.4) .. controls (141.5,102.57) and (142.17,101.9) .. (143,101.9) .. controls (143.83,101.9) and (144.5,102.57) .. (144.5,103.4) .. controls (144.5,104.23) and (143.83,104.9) .. (143,104.9) .. controls (142.17,104.9) and (141.5,104.23) .. (141.5,103.4) -- cycle ;
\draw    (143,103.4) -- (60.88,211.4) ;
\draw [shift={(105.87,152.23)}, rotate = 127.25] [fill={rgb, 255:red, 0; green, 0; blue, 0 }  ][line width=0.08]  [draw opacity=0] (10.72,-5.15) -- (0,0) -- (10.72,5.15) -- (7.12,0) -- cycle    ;
\draw    (147.67,129.67) -- (161.1,211.4) ;
\draw [shift={(155.19,175.47)}, rotate = 260.67] [fill={rgb, 255:red, 0; green, 0; blue, 0 }  ][line width=0.08]  [draw opacity=0] (10.72,-5.15) -- (0,0) -- (10.72,5.15) -- (7.12,0) -- cycle    ;
\draw    (143,103.4) -- (225.52,159.4) ;
\draw [shift={(188.4,134.21)}, rotate = 214.16] [fill={rgb, 255:red, 0; green, 0; blue, 0 }  ][line width=0.08]  [draw opacity=0] (10.72,-5.15) -- (0,0) -- (10.72,5.15) -- (7.12,0) -- cycle    ;
\draw  [dash pattern={on 4.5pt off 4.5pt}]  (143,103.4) -- (123.8,159.4) ;
\draw [shift={(135.51,125.25)}, rotate = 108.92] [fill={rgb, 255:red, 0; green, 0; blue, 0 }  ][line width=0.08]  [draw opacity=0] (10.72,-5.15) -- (0,0) -- (10.72,5.15) -- (7.12,0) -- cycle    ;
\draw  [fill={rgb, 255:red, 0; green, 0; blue, 0 }  ,fill opacity=1 ] (141.9,268.9) .. controls (141.9,268.07) and (142.57,267.4) .. (143.4,267.4) .. controls (144.23,267.4) and (144.9,268.07) .. (144.9,268.9) .. controls (144.9,269.73) and (144.23,270.4) .. (143.4,270.4) .. controls (142.57,270.4) and (141.9,269.73) .. (141.9,268.9) -- cycle ;
\draw    (143.4,268.9) -- (161.1,211.4) ;
\draw [shift={(150.34,246.36)}, rotate = 287.11] [fill={rgb, 255:red, 0; green, 0; blue, 0 }  ][line width=0.08]  [draw opacity=0] (10.72,-5.15) -- (0,0) -- (10.72,5.15) -- (7.12,0) -- cycle    ;
\draw    (143.4,268.9) -- (225.52,159.4) ;
\draw [shift={(180.56,219.35)}, rotate = 306.87] [fill={rgb, 255:red, 0; green, 0; blue, 0 }  ][line width=0.08]  [draw opacity=0] (10.72,-5.15) -- (0,0) -- (10.72,5.15) -- (7.12,0) -- cycle    ;
\draw    (143.4,268.9) -- (60.88,211.4) ;
\draw [shift={(107.47,243.87)}, rotate = 214.87] [fill={rgb, 255:red, 0; green, 0; blue, 0 }  ][line width=0.08]  [draw opacity=0] (10.72,-5.15) -- (0,0) -- (10.72,5.15) -- (7.12,0) -- cycle    ;
\draw    (60.88,211.4) -- (162.6,211.4) ;
\draw [shift={(105.24,211.4)}, rotate = 0] [fill={rgb, 255:red, 0; green, 0; blue, 0 }  ][line width=0.08]  [draw opacity=0] (10.72,-5.15) -- (0,0) -- (10.72,5.15) -- (7.12,0) -- cycle    ;
\draw    (161.1,211.4) -- (225.52,159.4) ;
\draw [shift={(197.2,182.26)}, rotate = 141.09] [fill={rgb, 255:red, 0; green, 0; blue, 0 }  ][line width=0.08]  [draw opacity=0] (10.72,-5.15) -- (0,0) -- (10.72,5.15) -- (7.12,0) -- cycle    ;
\draw  [dash pattern={on 4.5pt off 4.5pt}]  (60.88,211.4) -- (123.8,159.4) ;
\draw [shift={(87.33,189.54)}, rotate = 320.43] [fill={rgb, 255:red, 0; green, 0; blue, 0 }  ][line width=0.08]  [draw opacity=0] (10.72,-5.15) -- (0,0) -- (10.72,5.15) -- (7.12,0) -- cycle    ;
\draw  [dash pattern={on 4.5pt off 4.5pt}]  (123.8,159.4) -- (225.52,159.4) ;
\draw [shift={(168.16,159.4)}, rotate = 0] [fill={rgb, 255:red, 0; green, 0; blue, 0 }  ][line width=0.08]  [draw opacity=0] (10.72,-5.15) -- (0,0) -- (10.72,5.15) -- (7.12,0) -- cycle    ;
\draw  [fill={rgb, 255:red, 0; green, 0; blue, 0 }  ,fill opacity=1 ] (224.02,159.4) .. controls (224.02,158.57) and (224.7,157.9) .. (225.52,157.9) .. controls (226.35,157.9) and (227.02,158.57) .. (227.02,159.4) .. controls (227.02,160.23) and (226.35,160.9) .. (225.52,160.9) .. controls (224.7,160.9) and (224.02,160.23) .. (224.02,159.4) -- cycle ;
\draw  [fill={rgb, 255:red, 0; green, 0; blue, 0 }  ,fill opacity=1 ] (159.6,211.4) .. controls (159.6,210.57) and (160.27,209.9) .. (161.1,209.9) .. controls (161.93,209.9) and (162.6,210.57) .. (162.6,211.4) .. controls (162.6,212.23) and (161.93,212.9) .. (161.1,212.9) .. controls (160.27,212.9) and (159.6,212.23) .. (159.6,211.4) -- cycle ;
\draw  [fill={rgb, 255:red, 0; green, 0; blue, 0 }  ,fill opacity=1 ] (59.38,211.4) .. controls (59.38,210.57) and (60.05,209.9) .. (60.88,209.9) .. controls (61.7,209.9) and (62.38,210.57) .. (62.38,211.4) .. controls (62.38,212.23) and (61.7,212.9) .. (60.88,212.9) .. controls (60.05,212.9) and (59.38,212.23) .. (59.38,211.4) -- cycle ;
\draw    (143,103.4) -- (147.67,129.67) ;

\draw (45.6,205.6) node [anchor=north west][inner sep=0.75pt]    {$1$};
\draw (137.6,85) node [anchor=north west][inner sep=0.75pt]    {$2$};
\draw (163.6,211) node [anchor=north west][inner sep=0.75pt]    {$3$};
\draw (230.4,151.6) node [anchor=north west][inner sep=0.75pt]    {$4$};
\draw (138.8,272.6) node [anchor=north west][inner sep=0.75pt]    {$5$};
\draw (112.4,144.4) node [anchor=north west][inner sep=0.75pt]    {$6$};

\end{tikzpicture}
  \vspace{1em}
  \caption{The octahedron version of (\ref{six}-a). Four mutually
non-ad\-ja\-cent faces correspond to $\,\mu=\mathrm{Re}\,\hh\omega\,$ and
the remaining four 
to $\,\mathrm{Im}\,\hs\omega$, where seven faces (all but one of the latter, 
namely, 531) represent the same orientation of the boundary surface. The four
$\,\mu$-faces are also
characterized by being coherently oriented by the ar\-row-mark\-ed
orientations of their sides.}
\label{fig:octahedron}
\end{figure}
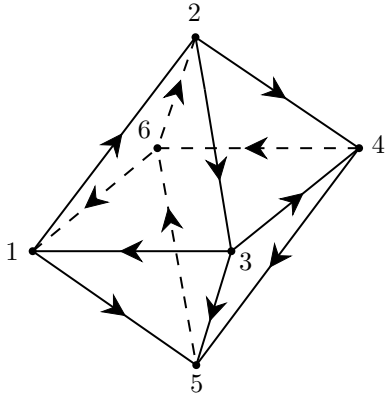

\section{The simplest invariants of differential forms}\label{si}
\setcounter{equation}{0}
Given a manifold $\,M\nh$, we say that vector sub\-bundles
$\,\dz\,$ of $\,T\nh M$ and $\,\ez\hs$ of $\,T^*\hskip-2.1ptM$ are
{\it polar to each other\/} when each is the other's ``orthogonal
complement'' relative to the obvious pairing between tangent vectors and
$\,1$-forms.

For an algebraically constant differential $\,\px\hs$-form $\,\my\,$ on an 
$\,n$-di\-men\-sion\-al manifold $\,M\hh$ it obviously follows that,
with $\,\,\dz\nnh_x\w$ denoting the divisibility space of $\,\my_x\w$,
\begin{equation}\label{cst}
\begin{array}{rl}
\mathrm{a)}&\mathrm{the\ function\ 
}\,x\mapsto\mathrm{rank}\hskip2.7pt\my_x\w\mathrm{\ is\ constant\ on\
}M\nh,\\
\mathrm{b)}&\dz\nnh_x\w\hs\mathrm{\ has\ the\ same\ dimension\
}\hs\,q\,\hh\mathrm{\ at\ all\ }\,x\in M\nh.
\end{array}
\end{equation}
The case (\ref{cst}-a), or  (\ref{cst}-b), gives rise to the natural
distribution $\,\zz$, or $\,\dz$, on $\,M\nh$, obtained by declaring
$\,\zz\nh_x\w$, or $\,\dz\nnh_x\w$, at any $\,x\in M\nh$, to be the
sub\-space $\,Z\,$ or, respectively, $\,D\,$
of $\,V\nh=\txm\,$ associated with $\,\my=\my_x\w$ as in Sect.~\ref{ie}.
We call $\,\zz\,$ and $\,\dz\,$ the {\it kernel\/} and the {\it divisibility
distribution\/} of $\,\my$. Thus, 
$\,\dz\,$ is 
polar to the sub\-bundle $\,\dz'$ of $\,T^*\hskip-2.1ptM$ having as sections 
the $\,1$-forms $\,\xi\,$ with $\,\xi\wedge\my=0$. Smoothness of $\,\zz\,$ and
$\,\dz$, under the respective assumptions (\ref{cst}-a) or (\ref{cst}-b),
follows
since $\,\zz$, or $\,\dz'\nh$, is the the kernel of a con\-stant-rank
vec\-tor-bun\-dle mor\-phism: the former, from $\,T\nh M\,$ to
$\,[T^*\hskip-2.1ptM]^{\wedge(\px-1)}\nnh$, sending $\,v\,$ to
$\,\my(v,\,\cdot\,,\dots,\,\cdot\,)$, the latter defined analogously, just
with 
$\,T\nh M\,$ and $\,T^*\hskip-2.1ptM\,$ switched; see (\ref{img}) and
(\ref{imk}). By (\ref{rnk}-b),
\begin{equation}\label{zid}
\mathrm{if\ (\ref{cst}}\hyp\mathrm{a)\ and\ (\ref{cst}}\hyp\mathrm{b)\ both\
hold,\ 
}\,\,\zz\,\subseteq\,\dz\,\mathrm{\ \ unless\ \ }\,\my=0.
\end{equation}
When $\,d\hh\my=0$, the distribution $\,\zz\,$ is easily 
seen to be in\-te\-gra\-ble (Lemma~\ref{prjct} below). However, $\hs\dz\hs$
need not be: see \cite[Sect.\,12]{derdzinski-piccione-terek}.

By a {\it volume form\/} on a manifold we mean a
no\-where\hh-zero top degree differential form, which amounts to a function 
without zeros in dimension $\,0$.

For a nonzero 
differential $\,\px\hs$-form $\,\my\,$ satisfying (\ref{cst}-b) 
on an $\,n$-di\-men\-sion\-al manifold $\,M\,$ and 
its divisibility distribution $\,\dz$, let $\,s=n-q\,$ in 
Lemma~\ref{divis}(a), so that $\,s\le\px\,$ and, by Remark~\ref{evenq},
\begin{equation}\label{qis}
\dz\hs\,\mathrm{\ has\ some\ even\ fibre\ dimension\ 
}\, q\in\{2,\dots,n\}\,\mathrm{\ \ if\ \ }\,\px\,=\,n\,-\,2\hh.
\end{equation}
Whether or not $\,\px=n-2$, assuming in\-te\-gra\-bi\-li\-ty of $\,\dz$, we
now replace $\,M\,$ with a sufficiently small neighborhood of any given point
so as to make $\,\dz\,$ the vertical distribution of a fibration 
$\,\pi:M\to\varSigma$, which gives rise to
\begin{equation}\label{vls}
\begin{array}{l}
\mathrm{the\ }\,\pi\hyp\mathrm{pull\-back\ 
}\,\xi^1\nnh\nh\wedge\ldots\wedge\hs\xi^s\mathrm{\ of\ a\ volume\ 
}\hs s\hyp\mathrm{form\ on\ }\,\varSigma\mathrm{,\ for\ the}\\
\mathrm{pull}\hyp\mathrm{backs\ }\hs\xi^1\nnh,\dots,\xi^s\nh\mathrm{\
under\ }\hs\pi\hs\mathrm{\ of\ some\ }\hn1\hyp\mathrm{forms\ trivializing\ 
}\hs T\hskip.2pt^*\hskip-2pt\varSigma,\\
\mathrm{and\ then\ }\,\my\,
=\,\hs\xi^1\nnh\nh\wedge\ldots\wedge\hs\xi^s\nnh\nh\wedge\hs\zeta\hs\mathrm{\
for\ some\ }\,(\px-\nh s)\hyp\mathrm{form\ }\,\hs\zeta\hs\mathrm{\ on\ }\,M\nh,
\end{array}
\end{equation}
the last line due to Lemma~\ref{divis}(b) applied to
$\,\my\,$ and $\,\xi^1\nnh,\dots,\hs\xi^s$ at any point $\,x\in M\nh$, with 
$\,D=\dz\nnh_x\w$ and $\,V\nh=\txm\nh$. By 
Lemma~\ref{divis}(c), the restriction of $\,\zeta$ to each leaf $\,L\,$ of
$\,\dz\,$ is uniquely determined by our $\,\my\,$ and 
$\,\xi^1\nnh,\dots,\hs\xi^s\nnh$. Replacing $\,\xi^1\nnh,\dots,\hs\xi^s$
by another such $\,s$-tuple of $\,\pi$-pull\-backs causes
$\,\xi^1\nnh\nnh\wedge\ldots\wedge\hs\xi^s$ to be replaced with its product by 
a function {\it constant along the leaves of\/} $\,\dz$. Thus, the 
restriction of $\,\zeta\,$ to each leaf $\,L\,$ is {\it unique up to 
multiplications by nonzero constants}, and 
\begin{equation}\label{idf}
\mathrm{we\ call\ this\ }\,(\px-\nh s)\hyp\mathrm{form\ }\,\zeta\hs\mathrm{\
the\ }\text{\it in\-di\-vis\-i\-ble factor\/}\mathrm{\ of\
}\,\my\,\mathrm{\ on\ the\ leaf\ }\hs L\hh.
\end{equation}
\begin{lemma}\label{cllvs}For a nonzero closed differential\/ $\,\px$-form\/
$\,\my\,$ on an\/ $\,n$-di\-men\-sion\-al manifold\/ $\,M\nnh$,
satisfying the condition\/ {\rm(\ref{cst}-b)} and having the divisibility
distribution\/ $\,\dz\,$ of 
co\-dimen\-sion\/ $\,s=n-q$, in\-te\-gra\-bi\-li\-ty of\/ $\,\dz\,$ implies
closedness of the in\-di\-vis\-i\-ble factor\/ $\,\zeta\hh$ of\/ 
$\,\my\,$ on every leaf of\/ $\,\dz$.
\end{lemma}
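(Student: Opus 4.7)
The plan is to use the local normal form (\ref{vls}) for $\,\my\,$ and reduce the claim to the algebraic ``cancellation'' Lemma~\ref{divpr}. Locally, $\,\dz\,$ is the vertical distribution of a fibration $\,\pi:M\to\varSigma\,$ with $\,\dim\varSigma=s$, and $\,\my=\theta\wedge\zeta\,$ where $\,\theta=\xi^1\nnh\nh\wedge\ldots\wedge\hs\xi^s$ is the $\,\pi$-pull\-back of a volume $\,s$-form on $\,\varSigma$. Since that form has top degree on $\,\varSigma\,$ it is automatically closed, and naturality of the exterior derivative under pull\-back yields $\,d\theta=0$.

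The hypothesis $\,d\my=0\,$ together with the Leib\-niz rule then collapses to $\,\theta\wedge d\zeta=0$, i.e.,
\[
\xi^1\nnh\nh\wedge\ldots\wedge\hs\xi^s\nnh\wedge d\zeta\,=\,0.
\]
As the $\,\xi^i$ are pointwise linearly independent (being a trivialization of $\,\pi^*T^*\varSigma$), I would invoke Lemma~\ref{divpr} pointwise in each $\,T^*_x\nh M\,$ to conclude that $\,d\zeta\,$ lies in the ideal generated by $\,\xi^1\nnh,\dots,\hs\xi^s$; thus $\,d\zeta=\textstyle\sum_{i=1}^s\xi^i\nnh\wedge\alpha_i\,$ for some $\,(\px-s)$-forms $\,\alpha_i$ on $\,M$.

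Finally, let $\,\iota_L\nh:L\hookrightarrow M\,$ denote the inclusion of a leaf of $\,\dz$. Because $\,L\,$ is a fiber of $\,\pi\,$ and each $\,\xi^i=\pi^*\widetilde\xi^i$ is pulled back from $\,\varSigma$, every $\,\iota_L^*\hh\xi^i$ vanishes; hence $\,\iota_L^*(d\zeta)=0$, and commutation of $\,d\,$ with $\,\iota_L^*$ gives $\,d(\zeta|_L)=\iota_L^*(d\zeta)=0$, as required. The ambiguity in $\,\zeta|_L\,$ (defined only up to multiplication by a nonzero constant on $\,L$) is harmless, since scaling preserves closedness. I do not anticipate any genuine obstacle: the whole argument is a clean combination of the fact that the volume factor is automatically closed for degree reasons with the already-established algebraic divisibility lemma.
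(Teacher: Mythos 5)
Your proposal is correct and follows essentially the same route as the paper's proof: closedness of the pulled-back volume factor $\,\xi^1\nnh\nh\wedge\ldots\wedge\hs\xi^s$ for degree reasons, the Leibniz rule reducing $\,d\my=0\,$ to $\,\xi^1\nnh\nh\wedge\ldots\wedge\hs\xi^s\nnh\wedge d\hh\zeta=0$, and Lemma~\ref{divpr} to place $\,d\hh\zeta\,$ in the ideal generated by $\,\xi^1\nnh,\dots,\hs\xi^s\nnh$, which vanishes on restriction to each leaf. Your extra remarks on the leaf inclusion and the scaling ambiguity are sound but only make explicit what the paper leaves implicit.
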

\begin{proof}Since $\,\dim\varSigma=s$, a volume $\,s$-form on $\,\varSigma\,$
chosen as in (\ref{vls}) is closed, leading to closedness of
$\,\hs\xi^1\nnh\nh\wedge\ldots\wedge\hs\xi^s$ in (\ref{vls}). With 
$\,\my=\xi^1\nnh\nh\wedge\ldots\wedge\hs\xi^s\nnh\nh\wedge\hs\zeta\,$ 
as in the lines following (\ref{vls}), 
$\,\xi^1\nnh\nh\wedge\ldots\wedge\hs\xi^s\nnh\nh\wedge\hh d\hh\zeta
=\hs d\my=0$. Thus, 
by Lemma~\ref{divpr}, $\,d\hh\zeta\,$ lies in the ideal generated by 
$\,\xi^1\nnh,\dots,\hs\xi^s\nnh$, that is, $\,d\hh\zeta=0\,$ on each leaf of
$\,\dz$.
\end{proof}
Whenever a nonzero differential $\,(n-2)$-form $\,\my\,$ with (\ref{cst}-b) 
on a manifold $\,M\,$ dimension  $\,n\,$  is {\it
in\-di\-vis\-i\-ble\/} in the sense -- cf.\ (\ref{ind}) --
of having the divisibility distribution 
$\,\dz\hs$ equal to $\,T\nh M\nh$, Lemma~\ref{duali} gives rise to a 
further invariant:
\begin{equation}\label{fin}
\text{\it the\ }\,2\hyp\text{\it form\ dual to\ }\,\my\text{\it,\ locally\
unique\ up\ to\ a\ sign.}
\end{equation}
More generally, for a differential $\,(n-2)$-form $\,\my\,$ 
on an $\,n$-di\-men\-sion\-al manifold $\,M$ satisfying (\ref{cst}-b) and 
having an in\-te\-gra\-ble 
divisibility distribution $\,\dz$, we can, by (\ref{cod}), apply the
last paragraph to any leaf $\,L\,$ of $\,\dz$, rather than $\,M\nh$, and --
instead of $\,\my$ itself -- to 
an in\-di\-vis\-i\-ble factor $\,\zeta\,$ of $\,\my\,$ on $\,L\,$ mentioned
in (\ref{idf}), obtaining 
\begin{equation}\label{tfd}
\begin{array}{l}
\mathrm{a\hs\ nondegenerate\ }\,\hs2\hyp\mathrm{form\
}\,\,\sy\,\hh\mathrm{\ on\ }\hs\,L\mathrm{,\hs\ 
unique\hh\ up\hh\ to\hh\ multiplications}\\
\mathrm{by\ nonzero\ constants,\nnh\ which\ is\ dual\ to\ an\
in\-di\-vis\-i\-ble\ factor\ of\ }\my\hh.
\end{array}
\end{equation}
\begin{lemma}\label{bivec}If a nonzero differential\/ $\,(n-2)$-form\/ 
$\,\my\,$ on an\/ $\,n$-di\-men\-sion\-al manifold\/ $\,M\,$ 
satisfies\/ {\rm(\ref{cst}-b)} and its divisibility distribution\/
$\,\dz\nh$, having the fibre dimension\/ $\,q$, is in\-te\-gra\-ble, the
bi\-vec\-tors on the leaves of\/ 
$\,\dz\hh$ reciprocal to the nondegenerate\/ $\,2$-forms\/ $\,\sy$
mentioned in\/ {\rm(\ref{tfd})} may be viewed, via an obvious push-for\-ward,
as forming a bi\-vec\-tor\/ $\,\beta\,$ defined locally in\/ $\,M\nnh$, and 
determined by\/ $\,\my$ uniquely up to multiplications by functions
constant along\/ $\,\dz$. Then
\begin{enumerate}
\item[(a)] locally in\/ $\,M\,$ there exist\/ $\,1$-forms\/ 
$\,\xi^{s+1}\nnh,\dots,\hs\xi^n$ with\/ 
$\,\sy=\hs\xi^{s+1}\nnh\nh\wedge\hs\xi^{s+2}\nh+\ldots
+\hs\xi\hh^{n-1}\nnh\nh\wedge\hs\xi\hh^n$ along each leaf, $\,q=n-s\,$ being 
even due to\/ {\rm(\ref{qis})}, 
\item[(b)] for any\/ $\,1$-forms\/ $\,\xi^1\nnh,\dots,\xi^s$ chosen as in\/
{\rm(\ref{vls})} and any\/ $\,\xi^{s+1}\nnh,\dots,\hs\xi^n$ as above,
$\,\xi^1\nnh,\dots,\hs\xi^n$ is a local trivialization 
of\/ $\,T^*\hskip-2.1ptM\,$ such that\/ 
$\,\my=\hs\omega\beta\,$ for the volume form\/
$\,\omega=\hs\xi^1\nnh\nh\wedge\ldots\wedge\hs\xi^n\nh$.
\end{enumerate}
\end{lemma}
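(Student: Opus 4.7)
The plan is to first make $\,\beta\,$ into a well-defined smooth object on $\,M$, then construct $\,\xi^{s+1}\nnh,\dots,\xi^n\,$ via a family version of Dar\-boux's theorem, and finally verify the product formula in (b) by a pointwise Hodge-star computation.

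Integrability of $\,\dz\,$ lets one shrink $\,M\,$ so that $\,\dz\,$ is the vertical distribution of a fibration $\,\pi:M\to\varSigma\,$ whose fibres are its leaves. On each leaf $\,L\,$ the nondegenerate $\,2$-form $\,\sy\,$ of (\ref{tfd}) is determined up to a nonzero multiplicative constant, and so is its reciprocal bi\-vec\-tor $\,\beta\nh_L$ on $\,L$. Lemma~\ref{duali} expresses $\,\sy\,$ algebraically in terms of $\,\my\,$ and an auxiliary volume form in $\,\dz\nnh_x\w$, which suffices to select a smooth leaf-wise normalization on the shrunken neighborhood. The resulting bi\-vec\-tors push forward under $\,T\hh L\hookrightarrow T\hh M\nh|_L\,$ to a smooth section $\,\beta\,$ of $\,\Lambda\hh^2\dz\subseteq\Lambda\hh^2 T\nh M$, and any other admissible choice differs from $\,\beta\,$ by a function constant along $\,\dz$.

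Claim (a) follows by applying the standard Mos\-er-iso\-to\-py argument fibre\-wise to the family $\,\{\sy\nh|_L\}\,$ of nondegenerate $\,2$-forms: this yields $\,1$-forms $\,\xi^{s+1}\nnh,\dots,\xi^n\,$ on $\,M\,$ whose leaf restrictions put $\,\sy\hs$ into the Dar\-boux normal form $\,\xi^{s+1}\wedge\xi^{s+2}\hn+\ldots+\xi^{n-1}\wedge\xi^n$, with $\,q=n-s\,$ even by (\ref{qis}). For (b), at any $\,x\in M\,$ the $\,1$-forms $\,\xi^1\nnh,\dots,\xi^s\,$ span the annihilator of $\,\dz\nnh_x\w$ by their choice in (\ref{vls}), while $\,\xi^{s+1}\nnh,\dots,\xi^n\,$ restrict to a Dar\-boux co\-frame on $\,\dz\nnh_x\w$ and hence to a basis of $\,\dz\nnh_x^*\w$; together they trivialize $\,T\hh^*\nnh M$. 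For the identity $\,\my=\omega\beta$, work pointwise: since $\,\beta\nh_x\in\Lambda\hh^2\dz\nnh_x\w$, every $\,\xi\hh^i$ with $\,i\le s\,$ contracts $\,\beta\nh_x\w$ to zero, so that in the standard basis expansion of $\,\omega\beta\,$ only terms containing $\,\xi^1\wedge\ldots\wedge\xi^s$ as a factor survive, giving
\[
\omega\beta\,=\,\xi^1\wedge\ldots\wedge\xi^s\wedge[(\xi^{s+1}\wedge\ldots\wedge\xi^n)\beta].
\]
The bracket is, by (\ref{mds}), the dual of $\,\sy\,$ on the leaf, i.e.\ the in\-di\-vis\-i\-ble factor $\,\zeta\,$ of $\,\my$. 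Combined with $\,\my=\xi^1\wedge\ldots\wedge\xi^s\wedge\zeta\,$ from (\ref{vls}), this delivers the formula.

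The principal obstacle is the smooth globalization in the first step: since $\,\sy$, and hence $\,\beta$, is determined only leaf-by-leaf and only up to a scalar, one must arrange these scalars consistently across leaves so that the resulting $\,\beta\,$ varies smoothly on $\,M$. Once this normalization is fixed, (a) is a routine family Dar\-boux statement and (b) reduces to the pointwise Hodge-star duality recorded in Lemma~\ref{duali}.
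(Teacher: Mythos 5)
Your part (b) is essentially the paper's argument: both reduce the identity $\my=\omega\beta$ to the pointwise computation that contracting $\omega=\xi^1\nnh\wedge\ldots\wedge\xi^n$ with $\beta\in\Lambda^2\dz$ kills every term not containing $\xi^1\nnh\wedge\ldots\wedge\xi^s$ as a factor, leaving $\xi^1\nnh\wedge\ldots\wedge\xi^s\wedge\zeta$, which is $\my$ by (\ref{vls}); the paper packages this via the associative law (\ref{law}) and the evenness of $q$, but the content is the same.

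There is, however, a genuine gap in your proof of (a). You invoke a fibrewise Moser isotopy to put $\sy$ into Darboux normal form, but Moser's argument requires $\sy$ to be \emph{closed} on each leaf, and that is not among the hypotheses of this lemma: the only assumptions are (\ref{cst}-b) and integrability of $\dz$. Indeed, the paper later exhibits (Theorem~\ref{duncl} and Remark~\ref{redun}) forms $\my$ satisfying all the hypotheses here, and even $d\my=0$, for which $\sy$ fails to be closed along the leaves -- so no Darboux coordinates exist and your step cannot go through as stated. The point is that (a) asks only for a coframe of $1$-forms $\xi^{s+1}\nnh,\dots,\xi^n$ (not exact ones), so it is a purely pointwise linear-algebra statement -- the normal form of a nondegenerate alternating bilinear form -- and the only real issue is smooth local selection of such a coframe. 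The paper handles that by observing that the admissible coframes are the local sections of a $G$-principal bundle over $M$ for a suitable matrix group $G$, hence admit smooth local sections. Replacing your Moser step by this algebraic normal form plus the $G$-structure (or any smooth Gram--Schmidt-type) selection repairs the argument; as written, it relies on a theorem whose hypotheses are not available.
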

\begin{proof}Being nondegenerate, $\,\sy\hh$ has the standard algebraic
type \cite[p.\,13]{bryant-chern-gardner-goldschmidt-griffiths}, and so 
$\,\xi^{s+1}\nnh,\dots,\hs\xi^n\nh\in T_{\!x}^*\nnh M\,$ required in (a) exist
at each point $\,x$, and may be augmented with $\,s\,$ additional $\,1$-forms
in $\,\txm\,$ to constitute a basis of $\,T_{\!x}^*\nnh M\nh$. Now (a) 
follows: $\,\xi^{s+1}\nnh,\dots,\hs\xi^n$ are just final portions of 
local trivializations of $\,T^*\hskip-2.1ptM\,$ 
dual to local trivializations of $\,T\nh M\,$ that are the smooth local 
sections of a $\,G$-prin\-ci\-pal bundle over $\,M\nh$, for a suitable 
matrix group $\,G\subseteq\mathrm{GL}\hh(n,\bbR)$. See, e.g., 
\cite[Sect.\,6]{derdzinski-piccione-terek}.

For (b), note that 
the definition (\ref{dfd}) of duality gives $\,\zeta\nh=\theta\hn\beta\,$ on
each leaf $\,L\,$ of $\,\dz$, with $\,M\nh,m,\my\,$ and $\,\omega\,$ 
replaced by $\,L,q/2,\zeta\hs$ and some suitable $\,\theta$.
By (\ref{vls}), $\,\my
=\hs\xi^1\nnh\nh\wedge\ldots\wedge\hs\xi^s\nnh\nh\wedge\hs[\beta\theta]\,$ 
and hence, as $\,q\,$ is even, (\ref{law}) yields 
$\,\omega\beta=\beta\hh\omega
=\beta[\hh\theta\wedge\hs\xi^1\nnh\nh\wedge\ldots\wedge\hs\xi^s]
=[\beta\theta]\wedge\hs\xi^1\nnh\nh\wedge\ldots\wedge\hs\xi^s\nh=\my$.
\end{proof}
The following obvious consequence of Lemma~\ref{dirpd} will be used in
Sect.\,\ref{dc}.
\begin{remark}\label{prdct}Given manifolds\/ 
$\,\varPi\hs$ and\/ $\,\varSigma$, a volume form\/ $\,\theta\,$ on\/ 
$\,\varSigma$, and an in\-di\-vis\-i\-ble closed 
differential\/ $\,r$-form\/ $\,\zeta$ on $\,\varPi\hs$ satisfying 
the condition\/
{\rm(\ref{cst}-b)}, let the symbols\/ $\,\theta\,$ and\/ $\,\zeta$ also stand
for the corresponding pull\-back forms on the product manifold\/
$\,M\nh=\varPi\times\nh\varSigma$. Then\/ 
$\,\my=\theta\wedge\zeta\,$ is a closed differential\/ $\,\px$-form\/ on\/
$\,M$ with the property\/ {\rm(\ref{cst}-b)}, for $\,\px=r+s\,$ and\/
$\,s=\dim\varSigma$, while 
the divisibility distribution\/
$\,\dz\hs$ of $\,\my\,$ is the factor distribution on\/ $\,M\,$ tangent 
to the\/ $\,\varPi\,$ factor manifold, and the restriction of\/ $\,\zeta\hs$
to $\,\dz$ is the in\-di\-vis\-i\-ble factor of\/ $\,\my$.
\end{remark}
\begin{lemma}\label{prjct}For a closed 
differential\/ $\,\px$-form\/ $\,\my\,$ with\/ {\rm(\ref{cst}-a)} on an\/
$\,n$-man\-i\-fold $\,M\nh$,
the kernel\/ $\,\zz\,$ is in\-te\-gra\-ble, and $\,\my\,$ is
pro\-ject\-a\-ble along\/ $\,\zz$, in the sense of Sect.\/~{\rm\ref{pr}},
onto a closed\/ $\,\px$-form on a local leaf space\/ $\,\varSigma$.
\end{lemma}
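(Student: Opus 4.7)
The plan is to use Cartan's magic formula as the main tool. First I would verify integrability of $\,\zz\,$ via Frobenius: given any local section $\,v\,$ of $\,\zz$, the definition of the kernel gives $\,\imath_v\w\my=0$, so $\,d\hh\my=0\,$ combined with Cartan's identity $\,\Li_v\w\my=d\hh\imath_v\w\my+\imath_v\w d\hh\my\,$ yields $\,\Li_v\w\my=0$. For two sections $\,v,w\,$ of $\,\zz$, the standard commutator formula $\,\imath_{[v,w]}\w\my=\Li_v\w\imath_w\w\my-\imath_w\w\Li_v\w\my\,$ then gives $\,\imath_{[v,w]}\w\my=0$, i.e., $\,[v,w]\,$ is a section of $\,\zz$. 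Integrability follows from the Frobenius theorem, given that $\,\zz\,$ already has constant rank thanks to (\ref{cst}-a) and the discussion preceding (\ref{zid}).

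Next, to establish projectability in the sense defined in Sect.~\ref{pr}, I would work in a sufficiently small neighborhood of a given point and let $\,\pi:M\to\varSigma\,$ be a submersion onto a local leaf space, whose vertical distribution is $\,\zz$. The two ingredients needed are: (i) $\,\my\,$ annihilates $\,\zz$, which holds by the very definition of $\,\zz\,$ as the kernel of $\,\my$; and (ii) $\,\my\,$ is constant along the leaves of $\,\zz$, which amounts to $\,\Li_v\w\my=0\,$ for every section $\,v\,$ of $\,\zz\,$ -- and this was already verified above. Consequently, evaluating $\,\my\,$ on horizontal lifts of any tangent vector fields on $\,\varSigma\,$ produces functions that are constant along the fibers of $\,\pi$, and therefore descend to $\,\varSigma$. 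This defines a $\,\px$-form $\,\bar\my\,$ on $\,\varSigma\,$ with $\,\pi^*\bar\my=\my$.

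Finally, closedness of $\,\bar\my\,$ is immediate: $\,\pi^*\hn d\hh\bar\my=d\hh\pi^*\bar\my=d\hh\my=0$, and since $\,\pi\,$ is a submersion, $\,\pi^*$ is injective on differential forms on $\,\varSigma$, so $\,d\hh\bar\my=0$.

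The only delicate point is checking that $\,\my\,$ is genuinely the pullback of a well-defined form on $\,\varSigma\,$ rather than merely annihilating $\,\zz\,$ and being leaf-invariant; this is a standard fact, but one could alternatively appeal to Lemma~\ref{coord} applied to commuting local sections of $\,\zz\,$ and complementary coordinate functions that are constant along $\,\zz$, in which coordinate system the component functions of $\,\my\,$ involve no indices from $\,\zz\,$ (by annihilation) and depend only on the transverse coordinates (by $\,\Li_v\w\my=0$), exhibiting $\,\my\,$ explicitly as a $\,\pi$-pull\-back.
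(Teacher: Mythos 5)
Your proof is correct and follows essentially the same route as the paper: the paper deduces both integrability of $\,\zz\,$ and constancy of the components of $\,\my\,$ along $\,\zz\,$ directly from the bracket formula (\ref{dbr}) applied in coordinates adapted to $\,\zz$, and your use of Cartan's magic formula together with $\,\imath_{[v,w]}\w=\Li_v\w\imath_w\w-\imath_w\w\Li_v\w$ is just a repackaging of that same identity. Your closing alternative (adapted coordinates via Lemma~\ref{coord}, components free of $\,\zz$-indices and constant along $\,\zz$) is precisely the paper's argument, so the ``delicate point'' you flag is handled exactly as in the text.
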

In fact, $\,\zz\,$ is in\-te\-gra\-ble by (\ref{dbr}). 
In local coordinates such that some of the coordinate fields
$\,\partial\nh_i\w$ span $\,\zz$, (\ref{dbr}) applied to $\,(\px+\hn1)$-tuples
of $\,\partial\nh_i\w$ implies  
constancy along $\,\zz\,$ of the components of $\,\my\,$ and closedness of
the projected $\,\px\hs$-form.
\begin{remark}\label{dboux}The Dar\-boux theorem with parameters. Let
$\,\zeta\hs$ be a con\-stant-rank section of $\,[\dz^*]^{\wedge2}$ for an 
in\-te\-gra\-ble distribution $\,\dz\,$ of fibre dimension $\,q\,$ on a 
manifold $\,M\nh$. If the restriction of $\,\zeta\hs$ to each leaf of
$\,\dz\hs$ is closed, then, locally in $\,M\nh$, and 
there exist functions $\,x^1\nh,\dots,x^q\nh$, constituting local coordinates
on each leaf of $\,\dz$, and such that 
$\,\zeta=\hs dx^1\nnh\wedge\hs dx^2\nh+\ldots
+\hs dx\hh^{r-1}\nnh\wedge\hs dx\hh^r\nh$, where
$\,r\hs$ is the (even) rank of $\,\zeta$. Namely, when $\,r\hn=q\,$ this is
\cite[Lemma 3.10]{bandyopadhyay-dacorogna-matveev-troyanov}. The general case
follows: the vector sub\-bun\-dle $\,\mathrm{Ker}\,\hs\zeta\hs$ of $\,\dz\hs$
is in\-te\-gra\-ble (Lemma~\ref{prjct}), and we may replace $\,M\,$ with
a local leaf space of $\,\mathrm{Ker}\,\hs\zeta$.
\end{remark}

\begin{remark}\label{volfr}It is well known --
see, e.g., \cite[Example\,1.6]{munoz-masque-pozo-coronado-rosado-maria} or 
\cite[Sect.\,11]{derdzinski-piccione-terek} -- that locally,
in any dimension $\,n$, any given volume form 
equals $\,\hs dx^1\nnh\nh\wedge\ldots\wedge\hs dx^n$ for suitable
coordinates $\,x^1\nnh,\dots,x^n\nnh$. This remains true in the 
hol\-o\-mor\-phic category, with the same
argument just cited from \cite{munoz-masque-pozo-coronado-rosado-maria} or 
\cite{derdzinski-piccione-terek}.
\end{remark}
\begin{remark}\label{dcomp}Let a de\-com\-pos\-able differential
$\,\px\hs$-form $\,\my\,$ on a manifold be algebraically constant (that is,
either identically zero, or nonzero everywhere). Then closedness of $\,\my\,$
is equivalent to its local constancy, as well as to its \igy. 
This is obvious from Remark~\ref{dvker}, Lemma~\ref{prjct}, and 
Remark~\ref{volfr} for $\,n=\px$.
\end{remark}
\begin{remark}\label{algct}The condition (\ref{cst}-b)
for a differential $\,(n-2)$-form $\,\my\,$ in dimension $\,n\,$ is equivalent 
to algebraic constancy of $\,\my$. Namely, in the lines following (\ref{vls}), 
$\,\my=\xi^1\nnh\nh\wedge\ldots\wedge\hs\xi^s\nnh\wedge\hs\zeta$, 
where $\,s\,$ is the co\-dimen\-sion of the divisibility distribution
$\,\dz\,$ and, by (\ref{cod}), the in\-di\-vis\-i\-ble factor $\,\zeta\hs$
restricted to $\,\dz\,$ has co\-de\-gree two. Being uniquely associated, via
(\ref{tfd}), with its dual $\,2$-form $\,\sy$, our $\,\zeta\hs$ is thus 
algebraically constant due to nondegeneracy of $\,\sy$.
\end{remark}

\section{Differential $\,3$-forms in dimension six}\label{dt}
\setcounter{equation}{0}
For a nonzero algebraically constant 
differential $\,3$-form on a $\,6$-di\-men\-sion\-al manifold $\,M\nh$,
each of the five cases of (\ref{six}) is realized, locally, by
\begin{equation}\label{smt}
\begin{array}{l}
\mathrm{smooth\ }\,1\hyp\mathrm{forms\ \
}\hs\xi^1\nnh,\dots,\hs\xi\hh^6\hs\hh\mathrm{\
trivializing\ \ }\,T^*\hskip-2.1ptM\nh,\\
\mathrm{dual\ to\ a\ local\ trivialization\ }e_1\w,\dots,e_6\w\nh\mathrm{\ of\
}\hs T\nh M.
\end{array}
\end{equation}
In fact, such $\,e_1\w,\dots,e_6\w$ are well known
\cite[Sect.\,6]{derdzinski-piccione-terek} to be precisely the smooth local
sections of a $\,G$-prin\-ci\-pal bundle over $\,M\nh$, for some 
matrix group $\,G\subseteq\mathrm{GL}\hh(n,\bbR)$. Consequently,
the invariants (\ref{fiv}) give rise, locally, to analogous smooth objects in
$\,M\nnh$, namely, an al\-most-com\-plex structure $\,J$, the
differential $\,3$-forms $\,\eta^+\nh,\eta^-$ and 
$\,\my(J\hs\cdot\,,\,\cdot\,,\,\cdot\,)$, 
the distributions
$\,\hz\,$ and $\,\hz^\pm\nh$,
\begin{equation}\label{vbi}
\mathrm{the\ vec\-tor}\hyp\mathrm{bun\-dle\ iso\-mor\-phism\
}\,\varTheta:\hz\to[\hz']^{\wedge2}\nh,
\end{equation}
the divisibility distribution $\,\dz\,$ of
$\,\my$, and finally -- if, in addition, $\,\dz\,$ is assumed to be
in\-te\-gra\-ble -- 
the in\-di\-vis\-i\-ble-fac\-tor $\,2$-form $\,\zeta\hs$ of $\,\my\,$ defined,
as in (\ref{idf}), along each leaf of $\,\dz$, and only unique on the 
leaf up to multiplications by nonzero constants. Note that, in
Sect.\,\ref{ds},
\begin{equation}\label{spn}
\hz\,\mathrm{\ is\ spanned\ by\ }\,e_2\w,e\hn_4\w,e_6\w\mathrm{,\ and\
}\,\hz^+\nnh,\,\hz^-\nh\mathrm{\ by\ }e\hn_4\w,e_5\w,e_6\w\mathrm{\ and\ 
}\,e_1\w,e_2\w,e_3\w\hh.
\end{equation}

\section{Local constancy and \igy\ of differential forms}\label{lc}
\setcounter{equation}{0}
The next result is immediate from\hn\ Theorems~\ref{tfsix}\hn\ and\hn\
\ref{lcint}, proved in \hbox{Sect.\,\ref{px}\hs--\hn\ref{pf}.}
\begin{theorem}\label{ififf}The local constancy of an
algebraically constant 
differential \hbox{$\,(n-2)$}-form on an\/ $\,n$-di\-men\-sion\-al manifold is 
equivalent to its being \ig.

This is also the case for\/ $\,3$-forms in dimension six.
\end{theorem}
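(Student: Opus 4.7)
The implication (ii)$\Rightarrow$(iii) is already recorded in (\ref{imp}), by taking for $\,\nabla\,$ the flat connection in the adapted coordinates. For the converse, the plan is to reduce the statement to the geometric characterizations of Theorems~\ref{tfsix} and~\ref{lcint}, by verifying that every hypothesis appearing there is automatic when $\,\my\,$ is parallel.

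The guiding principle is: if $\,\nabla\my=0\,$ for some tor\-sion-free $\,\nabla\,$ on $\,M$, then every tensor field obtained from $\,\my\,$ by a pointwise algebraic operation depending only on the (common) algebraic type of $\,\my_x\,$ is itself $\,\nabla$-par\-al\-lel. Applied to the constructions of Sect.\,\ref{ie}--\ref{dt}, this yields, in the $\,(n-2)$-form case, parallel divisibility and kernel distributions $\,\dz\,$ and $\,\zz$, together with a parallel in\-di\-vis\-i\-ble factor $\,\zeta\,$ (up to scale) along each leaf of $\,\dz\,$ and its parallel dual $\,2$-form $\,\sy$; in the $\,3$-form case in dimension six, it produces the parallel versions of the invariants (\ref{fiv}), e.g.\ the al\-most-com\-plex structure $\,J\,$ in the stable complex case (\ref{six}-a), the distributions $\,\hz^\pm\,$ and $\,3$-forms $\,\eta^\pm\,$ in the stable real case (\ref{six}-c), and $\,\hz\,$ with the iso\-mor\-phism (\ref{vbi}) in case (\ref{six}-b).

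From here, two standard tor\-sion-free facts handle all the integrability and closedness conditions appearing in Theorems~\ref{tfsix} and~\ref{lcint}. First, every $\,\nabla$-par\-al\-lel distribution is in\-te\-gra\-ble, since the Lie bracket $\,[v,w]=\nabla_v w-\nabla_w v\,$ of two sections then remains a section. Second, every $\,\nabla$-par\-al\-lel differential form is closed, by (\ref{imp}), while a $\,\nabla$-par\-al\-lel al\-most-com\-plex structure has vanishing Nijenhuis tensor, hence is integrable. Since $\,\dz\,$ is $\,\nabla$-par\-al\-lel, restricting $\,\nabla\,$ to a leaf of $\,\dz\,$ gives a tor\-sion-free connection relative to which $\,\zeta\,$ and its dual $\,\sy\,$ remain parallel, hence closed on the leaf. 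Combining these observations verifies each hypothesis of Theorems~\ref{tfsix} and~\ref{lcint}, yielding (ii).

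The main obstacle I expect is not analytic but book\-keeping: one must match, case by case in the algebraic types (\ref{six}) and for each value of the divisibility dimension $\,q\,$ in (\ref{qis}), the list of invariants produced from $\,\my\,$ with the precise closedness$\,/\,$in\-te\-gra\-bi\-li\-ty hypotheses of Theorems~\ref{tfsix} and~\ref{lcint}, and confirm in each case that parallelism of $\,\my\,$ propagates to each of them. The de\-com\-pos\-able type (\ref{six}-e) is already handled by Remark~\ref{dcomp}, so in the six-di\-men\-sion\-al case only the four nontrivial types of (\ref{six}) require any real attention.
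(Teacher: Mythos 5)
Your proposal is correct and takes essentially the same route as the paper: Theorem~\ref{ififf} is obtained by citing Theorems~\ref{tfsix} and~\ref{lcint}, with the direction ``parallel $\Rightarrow$ geometric conditions'' supplied, just as in the paper's proofs of those theorems, by naturality of the invariants, integrability of $\nabla$-parallel distributions for torsion-free $\nabla$, and closedness of parallel forms. The one point the paper handles with more care is that the indivisible factor $\zeta$ (hence its dual $\sy$) is defined on a leaf of $\dz$ only up to a constant factor, so its parallelism along the leaf is not a direct instance of your pointwise-naturality principle but requires the argument via Lemma~\ref{annih}; your leaf-restriction remark reaches the same conclusion, so this is a refinement rather than a gap.
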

The analog of Theorem~\ref{ififf} is known
\cite[Prop.\,D]{derdzinski-piccione-terek} to hold for 
differential $\,\px\hs$-forms in dimension $\,n$, where 
$\,\px\in\{0,1,2,n-1,n\}$. Thus, in dimensions $\,n\le6$, a 
differential form of any degree is locally constant if and only if it is \ig. 
However, \ig\ forms that are not locally constant exist in infinitely many
dimensions, starting from $\,7\,$ and $\,8$. See Theorem~\ref{injec}.

The objects $\,J,\hz,\hz^\pm\nh,\dz,\zeta\,$ in next theorem were described
in Sect.\,\ref{dt}.
\begin{theorem}\label{tfsix}The following three properties
of a nonzero algebraically constant differential\/ $\,3$-form\/ $\,\my\,$
on a $\,6$-di\-men\-sion\-al manifold 
are mutually equivalent.
\begin{enumerate}
\item[(i)] Local constancy.
\item[(ii)] Being \ig.
\item[(iii)] Closedness of\/ $\,\my$, coupled with
\begin{enumerate}
\item[(a)] in\-te\-gra\-bi\-li\-ty of the al\-most-com\-plex structure\/
$\,J\nh$, in case\/ {\rm(\ref{six}-a)},
\item[(b)] in\-te\-gra\-bi\-li\-ty of the distribution\/ $\,\hz$, when\/
{\rm(\ref{six}-b)} holds,
\item[(c)] in\-te\-gra\-bi\-li\-ty of both\/ 
$\,\hz^\pm$ under the assumption\/ {\rm(\ref{six}-c)},
\item[(d)] in\-te\-gra\-bi\-li\-ty of the divisibility distribution\/ 
$\,\dz\nh$, for\/ {\rm(\ref{six}-d)},
\item[(e)] no further condition in case\/ {\rm(\ref{six}-e)},
\end{enumerate}
\end{enumerate}
\end{theorem}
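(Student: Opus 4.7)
The plan is to prove the equivalence via the cycle (i)$\Rightarrow$(ii)$\Rightarrow$(iii)$\Rightarrow$(i). The first implication is immediate from (\ref{ifi}). For (ii)$\Rightarrow$(iii), I would use (\ref{imp}) to obtain $d\my=0$, and then observe that each invariant naturally associated with $\my$ -- the almost-complex structure $J$ in case (a), the distribution $\hz$ in case (b), the pair $\hz^\pm$ in case (c), and the divisibility distribution $\dz$ in case (d) -- is uniquely (pointwise) determined by $\my$ and therefore $\nabla$-parallel whenever $\nabla\my=0$. A $\nabla$-parallel distribution under a torsion-free connection is automatically integrable since $[v,w]=\nabla_v w-\nabla_w v$ stays in the distribution when $v,w$ do; in case (a), parallelism of $J$ under torsion-free $\nabla$ forces the Nijenhuis tensor to vanish, so Newlander--Nirenberg yields integrability.

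The substantive content is (iii)$\Rightarrow$(i), handled case by case. Case (e) is immediate from Remark~\ref{dcomp} since $\my$ is decomposable. Case (d) uses Lemma~\ref{cllvs}: with $\dz$ integrable and of codimension one, the indivisible factor of $\my$ is closed of rank four on each leaf, so Darboux with parameters (Remark~\ref{dboux}) trivializes it in leaf-adapted coordinates, which, combined with a defining function of the one-dimensional local leaf space of $\dz$ as in (\ref{vls}), produces coordinates making $\my$ constant-coefficient. In case (c), integrability of both $\hz^\pm$ together with $TM=\hz^+\!\oplus\hz^-$ realizes $M$ locally as a product of two $3$-manifolds; Lemma~\ref{plbcx} decomposes $\my=\eta^++\eta^-$ into pullbacks of closed $3$-forms on the factor manifolds, each a nowhere-vanishing top-degree form on a $3$-manifold, which Remark~\ref{volfr} trivializes. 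Case (a) proceeds via Newlander--Nirenberg: integrability of $J$ provides local holomorphic coordinates $z^1,z^2,z^3$; the $(3,0)$-form $\omega$ with $\my=\mathrm{Re}\,\omega$ has closed real part and so (by Remark~\ref{clsed}) is itself closed, hence holomorphic; the holomorphic version of Remark~\ref{volfr} then adjusts coordinates so that $\omega=dz^1\wedge dz^2\wedge dz^3$, and $\my$ becomes constant-coefficient in the real coordinates $\mathrm{Re}\,z^j,\mathrm{Im}\,z^j$.

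The hard part is case (b). Here I would start by using integrability of $\hz$ to pick local coordinates $x^1,x^3,x^5$ constant along the leaves of $\hz$, so that $dx^1,dx^3,dx^5$ span $\hz'$; Remark~\ref{replc}(a) then allows rewriting $\my$ in the form (\ref{six}-b) with these exact $1$-forms replacing $\xi^1,\xi^3,\xi^5$, leaving $1$-forms $\xi^2,\xi^4,\xi^6$ which, together with $dx^1,dx^3,dx^5$, give a local coframe. The aim is to find smooth functions $x^2,x^4,x^6$ whose differentials restrict to $\xi^2,\xi^4,\xi^6$ along each leaf of $\hz$; then Lemma~\ref{coord} applied leaf-wise to the dual vector fields $e_2,e_4,e_6$ of $\hz$ supplies the coordinate frame, in which $\my=dx^1\wedge dx^2\wedge dx^3+dx^3\wedge dx^4\wedge dx^5+dx^5\wedge dx^6\wedge dx^1$. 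Closedness $d\my=0$, together with $d(dx^1)=d(dx^3)=d(dx^5)=0$, reduces to the single identity $dx^1\wedge dx^3\wedge d\xi^2+dx^3\wedge dx^5\wedge d\xi^4+dx^5\wedge dx^1\wedge d\xi^6=0$, and the technical heart of the argument is extracting from this -- exploiting the flexibility in choosing $\xi^2,\xi^4,\xi^6$ provided by Remark~\ref{replc}(b) and the cyclic symmetry of (\ref{six}-b) -- the commutation $[e_\alpha,e_\beta]=0$ on each leaf of $\hz$ for $\alpha,\beta\in\{2,4,6\}$, at which point Lemma~\ref{coord} completes the construction.
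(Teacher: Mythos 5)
Your overall architecture --- the cycle (i)$\Rightarrow$(ii)$\Rightarrow$(iii)$\Rightarrow$(i) and the case-by-case treatment of (iii)$\Rightarrow$(i) --- is exactly the paper's, and your arguments for the easy implications and for cases (a), (c), (d), (e) coincide with the ones given there (holomorphic volume form via Remarks~\ref{clsed} and \ref{volfr} for (a); Lemma~\ref{plbcx} plus Remark~\ref{volfr} for (c); Lemma~\ref{cllvs} plus Remark~\ref{dboux} for (d); Remark~\ref{dcomp} for (e)).

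Case (b), however, contains two genuine gaps. First, you announce that ``the technical heart of the argument is extracting \dots\ the commutation $[e_\alpha,e_\beta]=0$'' from the closedness identity, but you never supply the extraction. The paper's mechanism is concrete: with $(\xi^1,\xi^3,\xi^5)=(dx^1,dx^3,dx^5)$, each $\varTheta e_i=\my(e_i,\cdot\,,\cdot)$ equals one of the products $dx^k\wedge dx^l$ and hence annihilates $\hz$; closedness of the $dx^k$ forces, via Remark~\ref{dufrm}, every Lie bracket of the frame fields to be tangent to $\hz$; feeding both facts into the bracket formula (\ref{dbr}) for $d\my=0$ evaluated on $(e_i,e_j,\cdot\,,\cdot)$ with $i,j\in\{2,4,6\}$ leaves only the term $\varTheta[e_i,e_j]$, and the injectivity of the bundle isomorphism $\varTheta$ of (\ref{vbi}) then kills the brackets. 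Without identifying $\varTheta$ and its injectivity as the tool, the commutation is not established. Second --- and this would remain a gap even after the commutation is granted and Lemma~\ref{coord} is applied --- arranging that $dx^2,dx^4,dx^6$ agree with $\xi^2,\xi^4,\xi^6$ along the leaves of $\hz$ does \emph{not} yield $\my=dx^1\wedge dx^2\wedge dx^3+dx^3\wedge dx^4\wedge dx^5+dx^5\wedge dx^6\wedge dx^1$: the differences $\xi^j-dx^j$ for $j=2,4,6$ are merely combinations of $dx^1,dx^3,dx^5$, and substituting into (\ref{six}-b) produces an extra summand $-\phi\,dx^1\wedge dx^3\wedge dx^5$. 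The paper still has to remove this term: closedness of $\my$ shows $\phi$ depends only on $x^1,x^3,x^5$, one writes $\phi$ as a divergence (Remark~\ref{divrg}), and a final shift of the remaining three coordinates absorbs it. Your outline overlooks this residual term entirely, so the asserted normal form does not follow from the steps you describe.
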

The notions of in\-di\-vis\-i\-ble factor and duality used below were
defined in Sect.\,\ref{si}.
\begin{theorem}\label{lcint}Given a nonzero algebraically constant
differential\/ $\,\px$-form\/ $\,\my$ on an\/ $\,n$-di\-men\-sion\-al
manifold\/ $\,M\nnh$, with the divisibility distribution\/ $\,\dz\,$ and the 
in\-di\-vis\-i\-ble factor\/ $\,\zeta$, 
the following two assumptions can be made about\/ $\,\my$.
\begin{enumerate}
\item[(a)] $\my\,$ is locally constant.
\item[(b)] $\my\,$ is \ig.
\end{enumerate}
The condition\/ {\rm(b)} always follows from {\rm(a)}, while\/ {\rm(b)}
implies that
\begin{enumerate}
\item[(i)] $\my\,$ is closed and the distribution\/ $\,\dz\,$ is
in\-te\-gra\-ble.
\end{enumerate}
If\/ $\,\px=n-2$, {\rm(b)} has a further consequence, namely,
\begin{enumerate}
\item[(ii)] along each leaf of\/ $\,\dz\nh$, the\/ $\,2$-form\/ $\,\sy$ dual
to\/ $\,\zeta\,$ is closed.
\end{enumerate}
Conversely, for\/ $\,\px=n-2$, {\rm(i)} 
and\/ {\rm(ii)} together imply\/ {\rm(a)}, and hence\/ {\rm(b)}.
\end{theorem}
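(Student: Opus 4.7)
The easy directions are standard. (a)$\Rightarrow$(b) is precisely (\ref{ifi}); and if $\nabla\mu=0$ for a torsion-free $\nabla$, the identity expressing $d\mu$ as an alternating sum of $\nabla_{v_i}\mu$ forces $d\mu=0$, while differentiating $\xi\wedge\mu=0$ along any section of the polar sub-bundle $\mathcal{D}'\subset T^*M$ yields $(\nabla_v\xi)\wedge\mu=0$, so $\mathcal{D}'$ (and hence $\mathcal{D}$) is $\nabla$-parallel; a parallel distribution under a torsion-free connection is integrable since $[v,w]=\nabla_vw-\nabla_wv$ stays in $\mathcal{D}$ for sections $v,w$ of $\mathcal{D}$.

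For (b)$\Rightarrow$(ii) when $p=n-2$, I invoke integrability of $\mathcal{D}$ to fiber $M$ locally over $\varSigma$ along $\mathcal{D}$, choose pullback $1$-forms $\xi^j=\pi^*(dy^j)$ for coordinates $y^1,\dots,y^s$ on $\varSigma$, and obtain the decomposition $\mu=\xi^1\wedge\dots\wedge\xi^s\wedge\zeta$ from Lemma~\ref{divis}(b). Since each $\xi^j$ is projectable along $\mathcal{D}$ and annihilates $\mathcal{D}$, Lemma~\ref{annih} gives $\nabla_v\xi^j=0$ for $v\in\mathcal{D}$; substituting into $\nabla_v\mu=0$ and invoking Lemma~\ref{divpr} confines $\nabla_v\zeta$ to the ideal generated by the $\xi^j$, so $\zeta|_L$ is parallel on every leaf $L$ relative to the torsion-free connection induced by $\nabla$. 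Writing $q=2m$, formula (\ref{mds}) renders the duality as the explicit identity $\zeta|_L=-\sigma^{\wedge(m-1)}/(m-1)!$; differentiating gives $\sigma^{\wedge(m-2)}\wedge\nabla^L\sigma=0$, and the Lefschetz-type injectivity of $\sigma^{\wedge(m-2)}\wedge(-)$ on $2$-forms in dimension $2m$ (trivial for $m\leq 2$) forces $\nabla^L\sigma=0$. A parallel 2-form under a torsion-free connection is closed, yielding (ii).

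For the converse (i)$+$(ii)$\Rightarrow$(a), I proceed analogously: from (i) I obtain the local fibration $\pi:M\to\varSigma$ with coordinates $y^1,\dots,y^s$, and $\mu=dy^1\wedge\dots\wedge dy^s\wedge\zeta$. Darboux with parameters (Remark~\ref{dboux}), applied to the nondegenerate, leaf-wise closed $\sigma$, supplies smooth functions $x^1,\dots,x^q$ on $M$ coordinatizing every leaf and satisfying $\sigma=dx^1\wedge dx^2+\dots+dx^{q-1}\wedge dx^q$ along leaves. Lemma~\ref{duali} combined with (\ref{mds}) then forces $\zeta|_L=\pm\eta|_L$ with $\eta=-\sum_i dx^1\wedge\dots\widehat{dx^{2i-1}\wedge dx^{2i}}\dots\wedge dx^q$; continuity on the parameter space (plus at worst a global sign flip) makes the sign uniformly $+$. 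Consequently $\zeta-\eta$ annihilates $\mathcal{D}$ at every point, so lies in the ideal generated by the $dy^j$, which is killed upon wedging by $dy^1\wedge\dots\wedge dy^s$. Hence $\mu=dy^1\wedge\dots\wedge dy^s\wedge\eta$, which has constant components in the coordinate system $(y^1,\dots,y^s,x^1,\dots,x^q)$ certified by Lemma~\ref{coord}. The principal obstacle is precisely this final lifting: the duality identifies $\zeta|_L$ only leaf-wise and only up to sign, so one must first propagate the sign via continuity and then exploit the ideal cancellation to upgrade a family of leaf-wise identifications into a single global expression for $\mu$ on $M$.
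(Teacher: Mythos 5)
Your argument is correct and follows the paper's overall strategy -- the same easy implications for (a)$\Rightarrow$(b)$\Rightarrow$(i), the same reduction of (b)$\Rightarrow$(ii) to leaf-wise parallelism of the indivisible factor via Lemma~\ref{annih} and the decomposition (\ref{vls}), and the same Darboux-with-parameters route to the converse -- but two sub-steps are executed differently. For (b)$\Rightarrow$(ii), the paper transfers parallelism from $\zeta$ restricted to a leaf $L$ to the dual $2$-form $\sigma$ by appealing to the naturality of the explicit expression of $\pm\sigma$ in terms of $\zeta$ (final clause of Lemma~\ref{duali}); you instead differentiate the identity $\zeta|_L^{\phantom i}=\mp\hs\sigma^{\wedge(m-1)}/(m-1)!$ (correct, by (\ref{mds}), up to the overall sign, which is harmless) and invoke hard-Lefschetz injectivity of $\beta\mapsto\sigma^{\wedge(m-2)}\wedge\beta$ on $2$-forms. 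That is a clean self-contained alternative, though the injectivity is an external standard fact not proved in the paper. For the converse, the paper routes through the reciprocal bivector $\beta$ and Lemma~\ref{bivec}(b), which packages precisely the leaf-wise normalization and smoothness-across-leaves issues that you handle by hand when matching $\zeta|_L^{\phantom i}$ with $\pm\eta|_L^{\phantom i}$ and propagating the sign by continuity; since you apply Darboux to the normalized $\sigma$ dual to $\zeta$ (not to an arbitrary leaf-wise rescaling), no scale discrepancy arises and the two arguments come to the same thing. Two small caveats: the phrase ``$\zeta-\eta$ annihilates $\mathcal{D}$'' clashes with the definition of ``annihilates'' in Sect.~\ref{pr} (where one argument in $\mathcal{D}$ suffices); what you mean, and what the ideal-membership step needs, is that $\zeta-\eta$ vanishes when \emph{all} arguments lie in $\mathcal{D}$. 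Also, Lemma~\ref{coord} is not the right citation for $(y^1\nh,\dots,y^s\nh,x^1\nh,\dots,x^q)$ being coordinates -- that follows simply from the linear independence of $dy^1\nh,\dots,dy^s\nh,dx^1\nh,\dots,dx^q\nh$, the former spanning the annihilator of $\mathcal{D}$ and the latter restricting to a coframe on each leaf.
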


\section{Proof of Theorem~\ref{tfsix}}\label{px}
\setcounter{equation}{0}
That (i)$\implies$(ii)$\implies$(iii) is obvious from (\ref{imp}) and
(\ref{iii}), since $\,J\,$ in (iii-a), due to its naturality, is 
$\,\nabla\nnh$-par\-al\-lel when a tor\-sion-free 
connection $\,\nabla\hs$ has $\,\nabla\nnh\my=0$. This last claim easily
follows from the New\-land\-er-Ni\-ren\-berg theorem, as pointed out
by various authors \cite[Sect.\,2.3]{clark-bruckheimer}, 
\cite[Definition\,2.2]{bolsinov-konyaev-matveev}.

We now proceed to show that (iii) implies (i) by establishing, 
for suitable local coordinates $\,x^1\nnh,\dots,x^n$ and
$\,\xi^1\nnh,\dots,\hs\xi\hh^6$ mentioned in (\ref{smt}), 
\begin{equation}\label{rmv}
\begin{array}{l}
\mathrm{each\ of\ the\ five\ equalities\ (\ref{six})\ with\ every\
}\,\xi\hs^i\mathrm{\ replaced\ by\ }\,dx^i\nh.
\end{array}
\end{equation}
First, in the case (\ref{six}-a), $\,\omega\,$ given by
(\ref{rlp}) is, by (\ref{tri}), a complex volume $\,(3,0)\,$ form on the 
complex manifold $\,M\nnh$, so that $\,\omega\,$ must be hol\-o\-mor\-phic,
due to closedness of its real part $\,\my\,$ and Remark~\ref{clsed}. 
The final clause of Remark~\ref{volfr} gives, locally, 
$\,\omega\hs=\hs dz^1\nnh\wedge\hs dz^2\nnh\wedge\hs dz^3$ in some 
hol\-o\-mor\-phic coordinates $\,z^1\nh,z^2\nh,z^3\nh$. The real coordinates 
$\,x^1\nnh,\dots,x^6$ with 
$\,(z^1\nh,z^2\nh,z^3)
=(x^4\nh+i\hh x^1\nh,x^6\nh+i\hh x^3\nh,x^2\nh+i\hh x\hh^5)\,$ now turn
(\ref{rlp}) into (\ref{rmv}).

Next, assume (iii) and (\ref{six}-b). 
The equality in (\ref{six}-b) still holds, according to Remark~\ref{replc}(a), 
with suitable $\,\hat\xi^2\nh,\hat\xi\hs^4\nh,\hat\xi\hh^6$ instead of
$\,\xi^2\nnh,\hs\xi\hs^4\nnh,\hs\xi\hh^6\nh$, if one replaces 
$\,\xi^1\nnh,\hs\xi^3\nnh,\hs\xi\hh^5$ with {\it any\/} local trivialization 
$\,\hat\xi^1\nh,\hat\xi^3\nh,\hat\xi\hh^5$ of $\,\hz'\nh$, the vector
sub\-bun\-dle of $\,T^*\hskip-2.1ptM\,$ polar to the 
distribution $\,\hz\,$ (and
$\,\hat\xi^1\nh,\dots,\hat\xi\hh^6$ will then still, locally, trivialize 
$\,T^*\hskip-2.1ptM$). 
Due to in\-te\-gra\-bi\-li\-ty of $\,\hz$,
we are therefore free to choose the triple
$\,(\xi^1\nnh,\hs\xi^3\nnh,\hs\xi\hh^5)\,$ 
in (\ref{six}-b) equal to $\,(dx^1\nh,\hs dx^3\nh,dx^5)$, with some functions 
$\,x^1\nh,x^3\nh,x^5$ constant along the leaves of $\,\hz$. For 
$\,e_1\w,\dots,e_6\w$ dual to 
$\,\xi^1\nnh,\dots,\hs\xi\hh^6$ as in (\ref{smt}), $\,e_2\w,e\hn_4\w,e_6\w$
form a local trivialization of $\,\hz$. Let the index ranges now be
$\,i,j=2,4,6$ and $\,k,l=1,3,5$. In (\ref{the}), 
$\,\varTheta e\hn_i\w=\my(e\hn_i\w,\,\cdot\,,\,\cdot\,)\,$ must thus be equal
to the corresponding 
$\,\hs\xi^k\nnh\wedge\hs\xi^l\hn=\hs dx^k\nnh\wedge\hs dx^l\nh$, and 
consequently 
annihilate $\,\hz$.
Closedness of all $\,\hs\xi^k\nh=\hs dx^k$ implies (see Remark~\ref{dufrm})
that all Lie brackets of $\,e_1\w,\dots,e_6\w$ are tangent to $\,\hz$.
From the last two sentences and (\ref{dbr}) with $\,d\my=0\,$ we now get
\[
0=-[d\my](e\hn_i\w,e\nh_j\w,\,\cdot\,,\,\cdot\,)
=\my([\hs e\hn_i\w,e\nh_j\w],\,\cdot\,,\,\cdot\,)
=\varTheta[\hs e\hn_i\w,e\nh_j\w]\hh,\qquad i,j=2,4,6\hh.
\]
Thus, due to the injectivity of $\,\varTheta\,$ in (\ref{vbi}), 
$\,e_2\w,e\hn_4\w,e_6\w$ commute with one another, and 
Lemma~\ref{coord} allows us to augment $\,x^1\nh,x^3\nh,x^5$ with three 
more functions so as to obtain, locally, a coordinate system
$\,x^1\nnh,y^2\nnh,x^3\nnh,y^4\nnh,x^5\nnh,y^6$ for which $\,e\hn_i\w$,
$\,i=2,4,6$, are the coordinate
vector fields $\,\partial\nh_i\w$. As
$\,[dy^j](e\hn_i\w)=[dy^j](\partial\nh_i\w)=\delta_i^j=\xi^j(e\hn_i\w)$,
each $\,\xi^j\nnh-\hs dy^j\nnh$, $\,j=2,4,6$, annihilates $\,\hz$, while 
$\,(\xi^1\nnh,\hs\xi^3\nnh,\hs\xi\hh^5)
=(dx^1\nh,\hs dx^3\nh,dx^5)$. Therefore, 
$\,\xi^2$ (or $\,\xi\hs^4\nh$, or $\,\xi\hh^6$) equals 
$\,dy^2\nh+\phi_5\w\hs dx^5$ (or 
$\,dy^4\nh+\phi_1\w\hh dx^1\nh$, 
or $\,dy^6\nh+\phi_3\w\hs dx^3$) plus a functional combination of 
$\,dx^1\nnh,\hs dx^3$ (or $\,dx^3\nnh,\hs dx^5$ or, respectively, 
$\,dx^1\nnh,\hs dx^5$), with some functions
$\,\phi_1\w,\phi_3\w,\phi_5\w$.
Substituting the expressions just obtained for
$\,\xi^2\nnh,\hs\xi\hs^4\nnh,\hs\xi\hh^6\nnh$, we rewrite (\ref{six}-b)
with $\,(\xi^1\nnh,\hs\xi^3\nnh,\hs\xi\hh^5)
=(dx^1\nh,\hs dx^3\nh,dx^5)\,$ as
\begin{equation}\label{rds}
\begin{array}{l}
\my\,=\,\hs\xi^1\nnh\nh\wedge\hs\xi^2\hn\nnh\wedge\hs\xi^3\nh
+\hs\xi^3\nnh\nh\wedge\hs\xi\hs^4\nh\nnh\wedge\hs\xi\hh^5\nh
+\hs\xi\hh^5\nnh\nh\wedge\hs\xi\hh^6\hn\nnh\wedge\hs\xi^1\hs
=\,dx^1\nnh\nh\wedge\hs dy^2\hn\nnh\wedge\hs dx^3\\
\hskip10pt+\hs\,\,dx^3\nnh\nh\wedge\hs dy^4\nh\nnh\wedge\hs dx^5\hs
+\hs\,dx^5\nnh\nh\wedge\hs dy^6\hn\nnh\wedge\hs dx^1\hs
-\,\phi\,dx^1\nnh\nh\wedge\hs dx^3\hn\nnh\wedge\hs dx^5,
\end{array}
\end{equation}
where $\,\phi=\phi_1\w+\phi_3\w+\phi_5\w$. Since $\,\my\,$ is closed,
(\ref{rds}) gives
$\,d\phi\hs\wedge\hs dx^1\nnh\nh\wedge\hs dx^3\hn\nnh\wedge\hs dx^5\nh=0$
and, 
by Lemma~\ref{divpr}, $\,\phi\,$ is a function of the variables
$\,x^1\nh,x^3\nh,x^5\nh$, thus equal -- see Remark~\ref{divrg} -- to
the divergence of some vector field $\,w=(w^1\nh,w^3\nh,w^5)$, with each 
$\,w^k$ depending only on $\,x^1\nh,x^3\nh,x^5\nh$. If we now set
$\,(x^2\nnh,\hs x^4\nnh,\hs x^6)
=(y^2\nh+w^5\nnh,\hs y^4\nh+w^1\nnh,\hs y^6\nh+w^3)$,
(\ref{rds}) becomes (\ref{rmv}) for the case (\ref{six}-b),
$\,x^1\nnh,\dots,x^6$ being local coordinates as 
linear independence of $\,dx^1\nnh,\dots,dx^6$ is immediate from 
the lines following (\ref{eql}). 

Suppose now that (iii) and (\ref{six}-c) hold. The sub\-bun\-dles $\,\hz^\pm$
of $\,T\nh M\nh$, being in\-te\-gra\-ble, are, locally, the factor
distributions of a Car\-te\-sian-prod\-uct decomposition of $\,M\nh$, and
so, by Lemma~\ref{plbcx}, $\,\eta^\pm$ are the pull\-backs to 
$\,M\,$ of some volume forms on the factor manifolds. Remark~\ref{volfr}
now gives, locally, 
$\,\eta^+\nh=\hs dx^1\nnh\wedge\hs dx^2\nnh\nh\wedge\hs dx^3$ and 
$\,\eta^-\nh=\hs dx^4\nnh\wedge\hs dx^5\nnh\nh\wedge\hs dx^6$ for some
local coordinates $\,x^1\nnh,x^2\nnh,x^3$ and 
$\,x^4\nnh,x^5\nnh,x^6$ in the factors, proving (\ref{rmv}) for (\ref{six}-c).

For (iii-d), Lemma~\ref{cllvs} and Remark~\ref{dboux} yield, locally, $\,\zeta
=\hs dx^2\nnh\wedge\hs dx^3\nh+\hs dx\hh^4\nnh\wedge\hs dx^5$ for suitable 
functions $\,x^2\nh,\dots,x^6$ constituting local coordinates on each leaf
of $\,\dz$, cf.\ (\ref{fiv}-d), while $\,\xi^1$ then becomes the volume
$\,1$-form on $\,\varSigma$, appearing in (\ref{vls}) with $\,s=1$.
One\hh-di\-men\-sion\-al\-i\-ty of $\,\varSigma$ now gives, locally,
$\,\xi^1\nh=\hs dx^1$
for some function $\,x^1$ with $\,dx^1\nh\ne0$, constant along the leaves of 
$\,\dz$, so that 
$\,\mu=\xi^1\nnh\nh\wedge\hs\zeta
=\hs dx^1\nnh\wedge\hs(dx^2\nnh\wedge\hs dx^3\nh
+\hs dx\hh^4\nnh\wedge\hs dx^5)\,$ in the resulting 
local coordinates $\,x^1\nnh,\dots,x^6\nh$, as required.

Finally, assuming (iii) and (\ref{six}-e), we get (\ref{rmv}) directly from
Remark~\ref{dcomp}.

This completes the proof of Theorem~\ref{tfsix}.

\section{Proof of Theorem~\ref{lcint}}\label{pf}
\setcounter{equation}{0}
By (\ref{imp}) and (\ref{iii}), (a)$\implies$(b)$\implies$(i). 
Deriving (ii) from (b) requires a more subtle argument, the
in\-di\-vis\-i\-ble factor $\,\zeta\hh$ of $\,\my\,$ being defined, along 
each leaf of $\,\dz$, only uniquely {\it up to multiplications by nonzero 
constants}. To this end, we assume (b). For tor\-sion-free $\,\nabla\hs$
with $\,\nabla\nnh\my=0$, the $\,1$-forms $\,\xi^1\nnh,\dots,\hs\xi^s$  
in (\ref{vls}) annihilate the $\,\nabla\nh$-par\-al\-lel distribution
$\,\dz$. By Lemma~\ref{annih}, $\,\xi\hh^i$ are all $\,\nabla\nh$-par\-al\-lel
along $\,\dz$, and hence so is
$\,\theta=\xi^1\nnh\nh\wedge\ldots\wedge\hs\xi^s\nnh$. Thus, along each leaf
$\,L\,$ of $\,\dz$, the restriction of $\,\zeta$ to $\,L\,$ being,
due to Lemma~\ref{divis}(c) uniquely determined by $\,\my\,$ (and our fixed
$\,\theta$), is \ig\ relative to the tor\-sion-free connection induced by
$\,\nabla\hs$ on the totally geodesic sub\-man\-i\-fold $\,L$. The same
then follows for the $\,2$-form dual to the restriction of $\,\zeta$, as the
latter determines the former up to a sign (Lemma~\ref{duali}). Now (ii) follows.

To prove the final clause of the theorem, suppose now that a nonzero
algebraically constant differential $\,(n-2)$-form
$\,\my\,$ on a manifold $\,M\,$ of dimension $\,n$ satisfies (i) and 
(ii). Being nondegenerate, the $\,2$-form $\,\sy\hs$ dual to $\,\zeta\hs$  is,
locally, 
a symplectic form on each leaf of $\,\dz$. The Dar\-boux theorem with 
parameters \cite[Lemma 3.10]{bandyopadhyay-dacorogna-matveev-troyanov}
allows us, locally, to write $\,\sy
=\hs dx^{s+1}\nnh\wedge\hs dx^{s+2}\nh+\ldots
+\hs dx\hh^{n-1}\nnh\wedge\hs dx^n$ 
for some functions $\,x^{s+1}\nh,\dots,x\hh^n$ with 
$\,dx^{s+1}\nh\nnh\wedge\ldots\hs\wedge dx\hh^n\nh\ne0$, where $\,s\,$ is the 
co\-dimen\-sion of $\,\dz$. 
Any (local) functions $\,x^1\nnh,\dots,x^s$ such that 
$\,dx^1\nh\wedge\ldots\hs\wedge dx^s\nh\ne0$ and the leaves of
$\,\dz\hs$ are the level sets of $\,(x^1\nh,\dots,x^s)$ give rise to
local coordinates $\,x^1\nnh,\dots,x^n\nh$. The last $\,n-s\,$ of the
corresponding coordinate vector fields $\,\partial\nh_i\w$ in $\,M\,$ serve
in the same capacity on leaves of $\,\dz$, as $\,x^1\nnh,\dots,x^s$
are constant along them, and so 
$\,\beta=-\hs\partial\nh_{s+1}\w\wedge\hs\partial\nh_{s+2}\w-\ldots
-\hs\partial\nh_{n-1}\w\wedge\hs\partial\nh_n\w$ for 
the bi\-vec\-tor $\,\beta\,$ in Lemma~\ref{bivec}. On the other hand, 
$\,dx^1\nnh,\dots,dx^s$ may serve as $\,\xi^1\nnh,\dots,\xi^s$ in 
(\ref{vls}), and Lemma~\ref{bivec}(b) applied to 
$\,(\xi^1\nnh,\dots,\xi^n\nh)=(dx^1\nnh,\dots,dx^n\nh)\,$ gives
$\,\my\hh=\hs\omega\beta\,$ for the volume $\,n$-form
$\,\omega=\hs dx^1\nh\nnh\wedge\ldots\hs\wedge dx^n\nh$. Thus, the 
components of $\,\my\,$ in the coordinates
$\,x^1\nnh,\dots,x^n$ are constant, as required.

\section{Iso\-tropy Lie algebras and connections}\label{il}
\setcounter{equation}{0}
In a real vector space $\,V$ of dimension $\,n$, any linear en\-do\-mor\-phism
$\,A\in\mathfrak{gl}\hh(V)$ acts on 
$\,[V\hn^*]^{\wedge\px}\nnh$, for $\,1\le\px\le n$, as the derivation 
$\,\my\mapsto A\my$, with $\,[A\my](v_1\w,\dots,v\hn_\px\w)$ equal to the sum
over $\,i=1,\dots,\px\,$ of the terms
$\,\my(\widetilde v_1\w,\dots,\widetilde v\hn_\px\w)$, where
$\,\widetilde v_i\w=Av_i\w$ and $\,\widetilde v\nh_j\w=v\nh_j\w$
if $\,j\ne i$. Clearly, for the obvious action of
$\,\mathrm{GL}\hh(V)\,$ on $\,[V\hn^*]^{\wedge\px}\nnh$,
\begin{equation}\label{gea}
\mathfrak{h}\,=\,\{A\in\mathfrak{gl}\hh(V):A\my=0\}\,\mathrm{\ is\ the\
iso\-tropy\ Lie\ algebra\ of\ }\,\my\hh.
\end{equation}
Of particular interest to us is the case where
\begin{equation}\label{cse}
\begin{array}{l}
\mathrm{In\,\ 
(\ref{gea}),\ all\ }\hh\,A\hs\in\hs\mathfrak{h}\hh\,\mathrm{\ are\
skew}\hyp\mathrm{ad\-joint\ relative}\\
\mathrm{to\ some\ pseu\-do\hs}\hyp\mathrm{Eu\-clid\-e\-an\ inner\ product\
in\ \hn}\,V\nnh.
\end{array}
\end{equation}

Given a manifold $\,M\nh$, consider the in\-fi\-nite-di\-men\-sion\-al
af\-fine space $\,\mathcal{C}(M)\,$ of all tor\-sion-free connections on
$\,M\,$ and, for any fixed differential $\,\px\hs$-form $\,\my\,$ on $\,M\nh$,
\begin{equation}\label{aff}
\mathrm{the\ affine\ mapping\
}\,\,\mathcal{C}(M)\ni\,\nabla\,\mapsto\,\nabla\nnh\my\hh,
\end{equation}
valued in $\,(0,\px\hn+1)\,$ tensor fields, skew-sym\-met\-ric in the last
$\,\px\,$ arguments.
\begin{lemma}\label{atmst}Let an algebraically constant differential\/
$\,3$-form\/ $\,\my\,$ on an $\,n$-di\-men\-sion\-al manifold\/ $\,M\hs$
have an algebraic type that satisfies\/ {\rm(\ref{cse})}.
Then the mapping\/ {\rm(\ref{aff})} is injective and, 
even locally, there exists at most one tor\-sion-free connection\/
$\,\nabla\nh$ on\/ $\hs\,M\,$ with\/ $\,\nabla\nnh\my=0$.
\end{lemma}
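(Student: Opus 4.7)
The plan is to reduce both assertions to a single pointwise uniqueness statement: any two torsion-free connections $\nabla,\tilde\nabla$ with $\nabla\my=\tilde\nabla\my$ must coincide. Injectivity of (\ref{aff}) is exactly that, while the local ``at most one'' clause follows by applying the statement to any two torsion-free connections that both annihilate $\my$.

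To establish this uniqueness, I would form the difference tensor $T=\tilde\nabla-\nabla$. Since both connections are torsion-free, $T$ is a $(1,2)$ tensor field symmetric in its lower indices, i.e.\ $T(v)w=T(w)v$ at every $x\in M$ for all $v,w\in\txm$. A short Leib\-niz-rule computation converts $\nabla\my=\tilde\nabla\my$ into $T(v)\my=0$ for every $v$, with $T(v)\in\mathfrak{gl}(\txm)$ acting on $\my_x$ as the derivation introduced at the beginning of Sect.\,\ref{il}. In view of (\ref{gea}), this says precisely that $T(v)$ lies pointwise in the isotropy Lie algebra $\mathfrak{h}$ of $\my_x$.

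The key step is then to invoke (\ref{cse}): at each point, fix a pseu\-do-Eu\-clid\-e\-an inner product $g$ on $V=\txm$ relative to which every element of $\mathfrak{h}$ is skew-ad\-joint, and set $S(u,v,w)=g(T(v)w,u)$. Skew-ad\-joint\-ness of $T(v)$ yields that $S$ is skew in $(u,w)$, while torsion-freeness yields that $S$ is symmetric in $(v,w)$. From these two symmetries I would run the standard three-step permutation cycle---identical to the Koszul-type argument used in the uniqueness proof of the Le\-vi-Ci\-vi\-ta connection---to deduce that $2S=0$, hence $T=0$ at every point, and therefore $\tilde\nabla=\nabla$, both locally and globally.

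I do not foresee a genuine obstacle; the conceptual point worth emphasizing is that (\ref{cse}) is precisely the ingredient that powers the cycle, with $\mathfrak{h}$ taking over the role played classically by the Lie algebra of skew-sym\-met\-ric operators of a non\-de\-gen\-er\-ate inner product. The witness $g$ in (\ref{cse}) is an algebraic datum attached only to the type of $\my_x$, so it may be chosen independently at each point; no smoothness or global compatibility of $g$ is needed, and the entire argument takes place fibrewise in $T\nh M$.
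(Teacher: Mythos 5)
Your argument is correct and coincides with the paper's own proof: both reduce to the difference tensor of two torsion-free connections, observe via (\ref{gea}) that it takes values pointwise in the isotropy algebra $\mathfrak{h}$, and then kill the resulting $(0,3)$ tensor (symmetric in one pair of arguments, skew in an overlapping pair, thanks to (\ref{cse})) by the standard six-fold permutation cycle. The observation that the inner product need only be chosen fibrewise, with no smoothness required, likewise matches the paper's purely pointwise treatment.
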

\begin{proof}Suppose that a $\,(1,2)\,$ tensor field $\,B\,$ is the difference
between two tor\-sion-free connections on $\,M\,$ assigining to $\,\my\,$ the
same covariant derivative. Thus, in local coordinates, 
$\,B_{i\hn j}^s\my_{skq}\w+B_{ik}^s\my_{jsq}\w+B_{iq}^s\my_{jks}\w=0$, 
that is, if a vector $\,v\,$ is tangent to $\,M\,$ at a point $\,x$, the 
$\,(1,1)\,$ tensor $\,B_x\w(v,\,\cdot\,)\,$ equals, by (\ref{gea}), the value 
at $\,x\,$ of some element $\,A\,$ of the iso\-tropy Lie algebra 
$\,\mathfrak{h}\subseteq\mathfrak{gl}\hh(\txm)\,$ of $\,\my_x\w$. For a 
pseu\-\hbox{do\hs-}Eu\-clid\-e\-an inner product $\,g\,$ in $\,\txm\hs$
chosen as 
in (\ref{cse}), $\,g_{ks}\w B_{i\hn j}^s(x)\,$ is thus symmetric in $\,i,j\,$ and
skew-sym\-met\-ric in $\,j,k$, so that it must vanish, proving the injectivity
claim. The remaining assertion is now obvious, with the `even lo\-cal\-ly'
part immediate as $\,M\hs$ may be replaced by any open sub\-man\-i\-fold.
\end{proof}
Following Joyce \cite[Sect.\,2.2--2.3]{joyce}, for a Euclidean vector space
$\,V$ of dimension $\,n\,$
and a basis $\,\xi^1\nnh,\dots,\hs\xi^n$ of $\,V\hn^*$ dual to an or\-tho\-nor\-mal
basis 
of $\,V\nh$, we write $\,\xi\hs^{i\hn j\dots k}$ for 
$\,\xi\hs^i\nnh\nh\wedge\hs\xi^j\nnh\wedge\ldots\wedge\hs\xi\hh^k\nnh$, and 
consider, when $\,n=7$, the exterior $\,3$-form
\begin{equation}\label{sev}
\my\,=\,\hs\xi^{123}\nnh+\hh\xi^{145}\nnh+\hh\xi^{167}\nnh+\hh\xi^{246}\nnh-\hh\xi^{257}\nnh-\hh\xi^{347}\nnh-\hh\xi^{356}\nh,
\end{equation}
while, if $\,n=8$, we use the same symbol for the exterior $\,4$-form 
\begin{equation}\label{egh}
\begin{array}{l}
\my\,=\,\hs\xi^{1234}\nnh+\hh\xi^{1256}\nnh+\hh\xi^{1278}\nnh
+\hh\xi^{1357}\nnh
-\hh\xi^{1368}\nnh-\hh\xi^{1458}\nnh-\hh\xi^{1467}\\
\hskip10pt-\,\,\,\hs\xi^{2358}\nnh
-\hh\xi^{2367}\nnh-\hh\xi^{2457}\nnh+\hh\xi^{2468}\nnh+\hh\xi^{3456}\nnh
+\hh\xi^{3478}\nnh+\hh\xi^{5678}\nh.
\end{array}
\end{equation}
In both cases, the isotropy group of $\,\my\,$ in $\,\mathrm{GL}\hh(V)$, 
iso\-mor\-phic to $\,G_2\w$ or, respectively, $\,\mathrm{Spin}\hh(7)$, 
preserves the inner product \cite[Sect.\,2.2--2.3]{joyce}. Thus,
\begin{equation}\label{sat}
\mathrm{both\ (\ref{sev})\ and\ (\ref{egh})\ have\ the\ property\ (\ref{cse}).}
\end{equation}

\section{The Car\-tan $\,3$-forms of simple Lie algebras}\label{ct}
\setcounter{equation}{0}
Our convention about the sign of the curvature tensor $\,R\,$ of a connection
$\,\nabla$ on a manifold $\,M\,$ is such that, for vector fields $\,u,v,w$,
\begin{equation}\label{rvw}
\begin{array}{l}
R\hs(v,w)u\,=\,\naw\nav u\,-\,
\nav\naw u\,+\,\nabla\!_{\nh[v,w]}\w u
\end{array}
\end{equation}
When $\,\nabla$ is the Le\-vi-Ci\-vi\-ta connection of a 
pseu\-do\hs-Riem\-ann\-i\-an metric $\,g\,$ on $\,M\nh$, we may treat $\,R\,$
as a vec\-tor-bun\-dle mor\-phism
\begin{equation}\label{vbm}
\mathrm{a)}\hskip6ptR:[T\hskip.2pt^*\hskip-2ptM]^{\otimes2}\nnh
\to[T\hskip.2pt^*\hskip-2ptM]^{\otimes2}\nh,\quad
\mathrm{b)}\hskip6pt\mathrm{leaving\
}\,[T\hskip.2pt^*\hskip-2ptM]^{\odot2}\nnh\mathrm{\ and\ 
}\,[T\hskip.2pt^*\hskip-2ptM]^{\wedge2}\nnh\mathrm{\ invariant,}
\end{equation}
so that it acts on arbitrary $\,(0,2)\,$ tensor fields $\,b\,$
by
\begin{equation}\label{arb}
[\nh Rb\hs]_{i\hn j}\w\,=\hs\,R_{ipjq}\w b\hs^{pq}\nh,
\end{equation}
with index raising and
lowering via $\,g$, and summation over repeated indices. See
\cite[Sect.\,1.114,\,1.131]{besse}. This action not only 
preserves (skew\hn)\hh sym\-metry of $\,b$, but also --  due to the first
Bianchi identity -- amounts to the usual formula
\begin{equation}\label{std}
2[R\zeta]_{i\hn j}\w\,=\,R_{i\hn jpq}\w\zeta\hh^{pq}
\end{equation}
if $\,b=\zeta\,$ happens to be 
skew-sym\-met\-ric (a $\,2$-form).

Given a connected Lie group $\,G$, with the Lie algebra $\,\mathfrak{g}\,$ of 
left-in\-var\-i\-ant vector fields, we treat the Kil\-ling form $\,g$, 
the Car\-tan $\,3$-form $\,\gamma$, characterized by
$\,g(u,v)=\hs\mathrm{tr}\,[(\mathrm{Ad}\hskip1.3ptu)\hs\mathrm{Ad}\hskip1.3ptv]\,$ and
$\,\gamma(u,v,w)=g([u,v],w)\,$ whenever $\,u,v,w\in\mathfrak{g}$, and
the Lie bracket $\,C:\mathfrak{g}\times\mathfrak{g}\to\mathfrak{g}$, as 
left-in\-var\-i\-ant tensor fields of types $\,(0,2)$, $\,(0,3)\,$ and
$\,(1,2)\,$ on $\,G$, which then makes them -- see below -- also
bi-in\-var\-i\-ant. Setting
\begin{equation}\label{stb}
\nav w\,=\,[v,w]/2\,\mathrm{\ \ for\ \ }\,v,w\in\mathfrak{g}\hh,
\end{equation}
we define the {\it standard 
bi-in\-var\-i\-ant tor\-sion-free connection\/} $\,\nabla$ on $\,G$. By 
(\ref{rvw}), $\,\nabla$ has the $\,\nabla\nh$-par\-al\-lel curvature tensor
$\,R\,$ with
\begin{equation}\label{frv}
4R\hs(v,w)u\,=\,[[v,w],u]\,\mathrm{\ for\ }\,u,v,w\in\mathfrak{g}\hh.
\end{equation}
Bi-in\-var\-i\-ance of $\,C\,$ and $\hs\nabla$ trivially follows from the
dif\-feo\-mor\-phic in\-var\-i\-ance of the Lie bracket, while that of
$\,g\,$ (and, consequently, $\,\gamma$), as well as the fact that 
$\,g,\gamma,C$ and $\,R\,$ are all $\,\nabla\nh$-par\-al\-lel, is due to the
Ja\-co\-bi identity.

In any local coordinates $\,x^1\nnh,\dots,x^n$ for $\,G$, unrelated to the
Lie-group structure, our tensor fields have the component functions
$\,g_{i\hn j}\w$, 
$\,\gamma_{i\hn jk}\w$ and $\,C_{i\hn j}^k$, with 
\begin{equation}\label{gij}
\mathrm{a)}\hskip6ptg_{i\hn j}\w=C_{ir}^sC_{js}^r\hh,\qquad
\mathrm{b)}\hskip6pt\gamma_{i\hn jk}\w=C_{i\hn j}^rg_{rk}\w\hh,\qquad
\mathrm{c)}\hskip6pt4R_{i\hn jk}\w{}^q\nh=C_{i\hn j}^rC_{rk}^q\hn.
\end{equation}
In the case where $\,\mathfrak{g}\,$ is sem\-i\-simple, which allows
us to use $\,g$-in\-dex raising, $\,g\,$ is clearly a locally symmetric 
pseu\-do\hs-Riem\-ann\-i\-an Ein\-stein metric with the Le\-vi-Ci\-vi\-ta
connection
$\,\nabla\nh$, while (\ref{gij}-a) and (\ref{gij}-c) can be rewritten as
\begin{equation}\label{gma}
\gamma_{ipq}\w\gamma\hh^{pqj}\hs=\,-\delta_i^j\hh,\qquad
4R_{i\hn jkq}\w=C_{i\hn j}^r\gamma_{rkq}\w=g^{rs}\gamma_{i\hn jr}\w\gamma_{kqs}\w\hh.
\end{equation}
The dimension restriction in the next lemma amount to requiring that
$\,\mathfrak{g}\,$ not be iso\-mor\-phic to 
$\,\mathfrak{sl}\hh(2,\bbR)$, $\,\mathfrak{sl}\hh(2,\bbC)$, or 
$\,\mathfrak{su}\hh(2)=\mathfrak{so}\hh(3)$.
\begin{lemma}\label{isotr}For any simple Lie algebra\/ $\,\mathfrak{g}\,$ of
real dimension\/ $\,n\ge8$, the iso\-tropy Lie algebra\/ 
$\,\mathfrak{h}\subseteq\mathfrak{gl}\hh(\mathfrak{g})$, with\/
{\rm(\ref{gea})}, of the Car\-tan\/ 
$\,3$-form\/ $\,\gamma\in[\mathfrak{g}^*]^{\wedge3}\nnh$ equals the image\/
$\,\{\mathrm{Ad}\hskip1.3ptv:v\in\mathfrak{g}\}\,$ of the\/ $\,\mathrm{Ad}\,$
representation.

Consequently, {\rm(\ref{cse})} holds for\/ $\,V\nh=\mathfrak{g}\,$ and\/
$\,\my=\gamma$, and on any connected simple Lie group\/ $\,G\hs$
of dimension\/ $\,n\ge8\,$ 
the standard bi-in\-var\-i\-ant tor\-sion-free 
connection\/ $\,\nabla\hh$ given by\/ {\rm(\ref{stb})} is, 
even locally, the only tor\-sion-free connection on\/ $\,G\hs$ that makes the 
Car\-tan\/ $\,3$-form\/ $\,\gamma\,$ parallel.
\end{lemma}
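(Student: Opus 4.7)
My plan is to first verify the easy inclusion $\{\mathrm{Ad}\hskip1.3ptv:v\in\mathfrak{g}\}\subseteq\mathfrak{h}$ directly: expanding $(\mathrm{Ad}\hskip1.3ptv\,\cdot\,\gamma)(u_1,u_2,u_3)$ and using the ad-invariance of the Killing form $g$ to rewrite $g([u_1,u_2],[v,u_3])=-g([v,[u_1,u_2]],u_3)$ collapses the expression into the cyclic Jacobi sum at $(v,u_1,u_2)$, which vanishes.

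For the reverse inclusion, I would first restate the condition $A\gamma=0$ using the $g$-adjoint. Applying ad-invariance of $g$ to each of the three summands of $(A\gamma)(u,v,w)$ converts it into the single equivalent identity
\[
A^{\hs*}[u,v]+[Au,v]+[u,Av]=0\quad\text{for all\ }u,v\in\mathfrak{g},\qquad(\ast)
\]
where $A^*$ is the $g$-adjoint. Writing $A=A_a+A_s$ as $g$-skew plus $g$-symmetric, the strategy is to pin down $A$ as some $\mathrm{Ad}\hskip1.3ptx$ by a root-space analysis on the complexification. The dimension hypothesis $\dim\mathfrak{g}\ge8$ rules out precisely $\mathfrak{sl}(2,\mathbb{R})$, $\mathfrak{su}(2)$, and the realification of $\mathfrak{sl}(2,\mathbb{C})$, so $\mathfrak{g}_{\mathbb{C}}$ contains a complex simple summand of rank at least two, with a non-trivial reduced root system.

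Fix a Cartan subalgebra $\mathfrak{h}_{\mathrm{cart}}\subseteq\mathfrak{g}_{\mathbb{C}}$ and specialize $(\ast)$ to pairs drawn from $\mathfrak{h}_{\mathrm{cart}}$ and the root spaces $\mathfrak{g}_\alpha$. Taking both arguments in $\mathfrak{h}_{\mathrm{cart}}$ yields a parameterization
\[
Ah=T(h)+\sum_\alpha\lambda_\alpha\alpha(h)\,e_\alpha\quad\text{for\ }h\in\mathfrak{h}_{\mathrm{cart}}.
\]
Taking one argument in $\mathfrak{h}_{\mathrm{cart}}$ and one in $\mathfrak{g}_\alpha$ forces $T=-2c\cdot\mathrm{id}$ for a single scalar $c$ (using irreducibility of the root system of a simple Lie algebra together with reducedness: no root is a nontrivial multiple of another), while also giving $(A_s e_\alpha)_\alpha=c$ and $(A_s e_\alpha)_\beta=0$ for $\beta\ne\pm\alpha,0$. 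Taking both arguments in root spaces $\mathfrak{g}_\alpha$, $\mathfrak{g}_\beta$ with $\alpha+\beta$ a root and reading off the $\mathfrak{g}_{\alpha+\beta}$-component of $(\ast)$ yields the functional equation $\rho_{\alpha+\beta}=\rho_\alpha+\rho_\beta+3c$ for $\rho_\alpha:=(A_a e_\alpha)_\alpha$; its solutions take the form $\rho_\alpha=\alpha(s)-3c$ for some $s\in\mathfrak{h}_{\mathrm{cart}}$, and the $g$-skewness relation $\rho_{-\alpha}=-\rho_\alpha$ then forces $c=0$. Subtracting $\mathrm{Ad}\hskip1.3ptx$ for $x=s-\sum_\alpha\lambda_\alpha e_\alpha$ from $A$ leaves a residual $g$-symmetric $B\in\mathfrak{h}$ that vanishes on $\mathfrak{h}_{\mathrm{cart}}$ and sends each $e_\alpha$ to a multiple $\kappa_\alpha e_{-\alpha}$; applying $(\ast)$ to $B$ at pairs $(e_\alpha,e_\beta)$ with $\alpha+\beta$ a root isolates $\kappa_{\alpha+\beta}e_{-(\alpha+\beta)}$ as a summand in a root space distinct from the other contributions, forcing $\kappa_{\alpha+\beta}=0$; since every root is such a sum in rank at least two, $B=0$.

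Granted $\mathfrak{h}=\{\mathrm{Ad}\hskip1.3ptv:v\in\mathfrak{g}\}$, property (\ref{cse}) is then immediate with the Killing form serving as the inner product: $g$ is nondegenerate on simple $\mathfrak{g}$, and its ad-invariance says exactly that each $\mathrm{Ad}\hskip1.3ptv$ is $g$-skew-adjoint. The uniqueness statement about $\nabla$ on a connected simple Lie group $G$ of dimension $n\ge8$ is a direct application of Lemma~\ref{atmst}. The main obstacle will be the root-space book-keeping of the third paragraph: the derivation of $c=0$ via the additive $\rho$-identity combined with $g$-skewness, and the elimination of the residual $\kappa_\alpha$'s via the distinct-root-space argument.
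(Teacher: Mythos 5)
Your strategy is genuinely different from the paper's. The paper never touches root spaces: it contracts the identity $A_i^s\gamma_{sjk}^{\phantom i}+A_j^s\gamma_{isk}^{\phantom i}+A_k^s\gamma_{ijs}^{\phantom i}=0$ against $\gamma^{jkp}$ to obtain $a^*=8Ra$ for $a=g(A\,\cdot,\cdot)$, splits $a$ into symmetric and skew parts (eigenvectors of $8R$ for the eigenvalues $+1$ and $-1$), and then quotes the known spectrum of the curvature operator of the bi-invariant connection -- from Derdzinski--Gal and Meyberg -- to conclude that the symmetric part vanishes (this is exactly where $n\ge8$ enters: $1/8$ fails to be an eigenvalue of $R$ on symmetric $2$-tensors outside the three low-dimensional algebras) and that the skew part lies in $\{\gamma(v,\cdot,\cdot):v\in\mathfrak{g}\}$. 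Your reformulation $(\ast)$ is correct, and your plan -- a self-contained root-space computation on $\mathfrak{g}_{\mathbb{C}}$ -- is a legitimate alternative that avoids the external spectral results at the cost of much more bookkeeping. The steps I could check (the parameterization of $A$ on the Cartan subalgebra, the relation $(A_se_\alpha)_\alpha=c$, the functional equation $\rho_{\alpha+\beta}=\rho_\alpha+\rho_\beta+3c$, and the derivation of $c=0$ from $\rho_{-\alpha}=-\rho_\alpha$) do come out as you state.

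As written, however, the argument has gaps. First, when $\mathfrak{g}$ is a complex simple Lie algebra regarded as real (which $n\ge8$ does not exclude), $\mathfrak{g}_{\mathbb{C}}$ is a sum of two simple ideals and its root system is \emph{not} irreducible; your appeals to irreducibility -- in forcing $T$ to be a single scalar multiple of the identity, in extending the additive function on roots to a linear functional, and in the claim that every root is a sum of two roots -- do not apply directly. One must first show that $A$ preserves the two ideals (this does follow from $(\ast)$ applied to a commuting pair taken from the two different ideals, since the two bracket terms then land in different ideals and the centers are trivial) and then run the argument on each ideal separately. Second, the assertion that every solution of $F(\alpha+\beta)=F(\alpha)+F(\beta)$ on an irreducible reduced root system of rank at least two is the restriction of a linear functional is a genuine lemma requiring proof, and it is precisely where the rank hypothesis does its work. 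Third, the reduction to the residual $B$ is incomplete: you have pinned down $(A_se_\alpha)_\beta$ for $\beta\ne\pm\alpha$ and the diagonal entries, but not the components $(Ae_\alpha)_{\mathfrak{h}_{\mathrm{cart}}}$ or $(A_ae_\alpha)_\gamma$ for $\gamma\ne\alpha$, so the claim that $A-\mathrm{Ad}\hskip1.3ptx$ annihilates the Cartan subalgebra and sends each $e_\alpha$ to a multiple of $e_{-\alpha}$ is not yet justified. All of this looks fixable, but until these points are filled in the proof is a sketch rather than a complete argument.
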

\begin{proof}Let us fix $\,A\in\mathfrak{h}\,$ and identify
$\,\mathfrak{g}\,$ with the Lie algebra of
left-in\-var\-i\-ant vector fields on a connected Lie group $\,G$. Then, 
in local coordinates $\,x^1\nnh,\dots,x^n$ as above, 
$\,A\,$ treated as a left-in\-var\-i\-ant tensor field of
type $\,(1,1)\,$ on $\,G\,$ satisfies the relation 
$\,A_i^s\gamma_{sjk}\w+A_j^s\gamma_{isk}\w+A_k^s\gamma_{i\hn js}\w=0\,$ 
which, contracted against $\,\gamma\hh^{jkp}\nh$, gives, due to (\ref{gma}),
$\,A_j^i=8A_p^qR^{\hh ip}{}\nnh_{jq}\w$. In other words, 
$\,a^*\nh=8Ra\,$ for the $\,(0,2)\,$ tensors $\,a,a^*$ 
at any point $\,x\in G\hs$ defined by
$\,a_{i\hn j}\w=A_i^sg_{sj}\w$ and $\,a^*_{i\hn j}=a_{ji}\w$. 
The operator (\ref{vbm}-a) for $\,R\,$ in (\ref{frv}) and $\,M\nh=\hs G\hs$ 
commutes, by (\ref{vbm}-b), with $\,a\mapsto a^*\nh$, and so
\begin{equation}\label{eig}
8Ra^\pm\hh=\,\hs\pm a^\pm\mathrm{\ \ for\ }\,a^\pm\hs=\,(a\pm a^*)/2\hh.
\end{equation}
The spectrum of (\ref{vbm}-a) for $\,R\,$ in (\ref{frv}) is completely
understood for all simple Lie groups, via an easy argument in
\cite{derdzinski-gal} for the restriction 
$\,R:[\mathfrak{g}^*]^{\wedge2}\nnh
\to[\mathfrak{g}^*]^{\wedge2}\nnh$, valid in the general
sem\-i\-simple case, and, for the other restriction, 
$\,R:[\mathfrak{g}^*]^{\odot2}\nnh
\to[\mathfrak{g}^*]^{\odot2}\nnh$, due to a result of Meyberg
\cite{meyberg}, also presented in \cite[the Appendix]{derdzinski-gal}. 

Namely, the operator $\,T\,$ in \cite[formula (2.6)]{derdzinski-gal} 
is, by (\ref{arb}) and (\ref{gma}), equal to $\,-8R$, for our
$\,R\,$ in (\ref{vbm}-a) and (\ref{frv}), and so, according to 
\cite[Lemma 2.1(c)--(d)]{derdzinski-gal}, 
$\,R:[\mathfrak{g}^*]^{\wedge2}\nnh
\to[\mathfrak{g}^*]^{\wedge2}$ is di\-ag\-o\-nal\-iz\-able
with the eigen\-values $\,0\,$ and $\,-\nh1/\hh8$, 
while its eigen\-space
for the eigenvalue $\,-\nh1/\hh8\,$ is
$\,\{\gamma(v,\,\cdot\,,\,\cdot\,):v\in\mathfrak{g}\}$. 

On the other hand, under our assumption about the dimension of 
$\,\mathfrak{g}$, \cite[Remark 4.5]{derdzinski-gal} implies that
$\,1/\hh8\,$ is {\it not\/} an eigen\-value of
$\,R:[\mathfrak{g}^*]^{\odot2}\nnh\to[\mathfrak{g}^*]^{\odot2}\nnh$. Note
that, according to \cite[Lemma 2.1(b)]{derdzinski-gal},
$\,\varOmega\,$ in \cite[Remark 4.5]{derdzinski-gal} equals
$\,T\nh$, and hence our $\,-8R$. Thus, by (\ref{eig}), $\,a=a^-$ lies, at each
point, in $\,\{\gamma(v,\,\cdot\,,\,\cdot\,):v\in\mathfrak{g}\}$, 
which is the eigen\-space just mentioned, that
is, $\,A\in\{\mathrm{Ad}\hskip1.3ptv:v\in\mathfrak{g}\}$. 
As the opposite inclusion 
$\,\{\mathrm{Ad}\hskip1.3ptv:v\in\mathfrak{g}\}\subseteq\mathfrak{h}\,$
amounts to the aforementioned bi-in\-var\-i\-ance of $\,\gamma$, the first
part of the lemma follows, while the final clause is then 
obvious from Lemma~\ref{atmst}, since\/ $\,\nabla\gamma=0\,$ the 
according to the lines following (\ref{frv}).
\end{proof}

\section{\Igy\ without local constancy}\label{pw}
\setcounter{equation}{0}
As mentioned in the Introduction, the converse of the first implication in
(\ref{imp}) for $\,\px\hs$-forms in dimension $\,n\,$ fails in general, unless
$\,\px\in\{0,1,2,n-2,n-1,n\}$. Here are some explicit examples.
\begin{theorem}\label{injec}If an exterior\/ $\,\px\hs$-form in dimension\/
$\,n\,$ has the algebraic type
\begin{enumerate}
\item[(a)] of\/ {\rm(\ref{sev})} with\/ $\,(n,\px)=(7,3)$, or\/
{\rm(\ref{egh})}
for\/ $\,(n,\px)=(8,4)$, or
\item[(b)] the Car\-tan\/ $\,3$-form 
of any simple Lie algebra\/ $\,\mathfrak{g}\,$ with\/
$\,\dimr\mathfrak{g}=n\ge8$,
\end{enumerate}
then it can be realized as a 
differential\/ 
$\,\px\hs$-form\/ $\,\my\,$ on an\/ $\,n$-di\-men\-sion\-al manifold, 
so as to be \ig, but not locally constant on any open
sub\-man\-i\-fold.

Specifically,
for\/ {\rm(a)} we may choose\/ $\,\my\,$ to
be a specific parallel\/ $\,\px\hs$-form on a compact simply connected
Riemannian manifold of dimension\/ $\,n\in\{7,8\}\,$ with the holonomy group\/ 
$\,G\nh_2\w$ or\/ $\,\mathrm{Spin}\hh(7)\,$ while, for\/ {\rm(b)}, we let\/
$\,\my\,$ be the Car\-tan\/ $\,3$-form 
on a connected 
Lie group having\/ $\,\mathfrak{g}\,$ as the Lie algebra
of left-in\-var\-i\-ant vector fields.
\end{theorem}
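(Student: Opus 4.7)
The plan is to handle (a) and (b) in parallel via the same two-step scheme: first exhibit an explicit torsion-free connection $\,\nabla\,$ annihilating $\,\my$, and then use Lemma~\ref{atmst} to convert the hypothesis ``$\my\hs$ locally constant on an open set $\,U$'' into ``$\nabla\hs$ flat on $\,U$'', the latter of which is ruled out by a direct non-flatness statement for the connection in question.

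For parallelism, in case (a) Joyce \cite{joyce} constructs, for each of $\,G_2\,$ and $\,\mathrm{Spin}(7)$, a compact simply connected Riemannian manifold of dimension $\,7\,$ or $\,8\,$ realizing that group as its full holonomy; the holonomy reduction is equivalent to the existence of a parallel differential $\,\px$-form whose algebraic type at each point is (\ref{sev}) or (\ref{egh}), parallel with respect to the Levi-Civita connection. In case (b), the Cartan $\,3$-form $\,\gamma\,$ on a connected Lie group $\,G\,$ with Lie algebra $\,\mathfrak{g}\,$ is bi-invariant by the Jacobi identity, and the standard bi-invariant torsion-free connection $\,\nabla\,$ of (\ref{stb}) annihilates every bi-invariant tensor field, so $\,\nabla\gamma=0$.

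For non-local-constancy, suppose for contradiction that $\,\my\,$ were locally constant on some nonempty open $\,U\subseteq M$. By (\ref{ifi}) there would exist a flat torsion-free connection $\,\nabla'\,$ on $\,U\,$ with $\,\nabla'\my=0$. The algebraic type of $\,\my\,$ satisfies (\ref{cse}): by (\ref{sat}) in case (a), and by Lemma~\ref{isotr} in case (b). Hence Lemma~\ref{atmst} applied to the open submanifold $\,U\,$ gives $\,\nabla'=\nabla|_U$, forcing the originally chosen $\,\nabla\,$ itself to be flat on $\,U$; it remains to rule this out.

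In case (b), the curvature tensor $\,R\,$ of $\,\nabla\,$ is left-invariant by (\ref{frv}), so its vanishing at any one point of $\,G\,$ would force $\,R\equiv 0\,$ on all of $\,G$; at the identity, however, $\,4R(v,w)u=[[v,w],u]$, and a simple Lie algebra of dimension $\,\ge 8\,$ has trivial center together with a nonzero bracket, so some triple bracket is nonzero. In case (a) the Joyce metrics are Ricci-flat Einstein, hence real-analytic in harmonic coordinates, and so the Riemann tensor is itself real-analytic; vanishing of $\,R\,$ on the open set $\,U\,$ would propagate to all of $\,M\,$ by analytic continuation, forcing the compact simply connected Riemannian manifold $\,M\,$ to be isometric to Euclidean space of dimension $\,7\,$ or $\,8$, which is impossible. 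The main obstacle is this last step; an equivalent route is via the irreducibility of the $\,G_2\,$ and $\,\mathrm{Spin}(7)\,$ holonomy representations, which forbids the appearance of a flat de~Rham factor over $\,U$.
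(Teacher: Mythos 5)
Your proposal is correct and follows essentially the same route as the paper: parallelism via Joyce's metrics in case (a) and the standard bi-invariant connection in case (b), then local uniqueness of the torsion-free connection annihilating $\,\my\,$ from (\ref{cse}) together with Lemma~\ref{atmst} (via (\ref{sat}) and Lemma~\ref{isotr}), and finally non-flatness on open subsets via real-analyticity of the Ricci-flat metrics in (a) and a pointwise curvature computation in (b). The only (harmless) deviations are cosmetic: the paper rules out flatness in (b) by noting the nonzero parallel Ricci tensor $\,-g/4\,$ rather than exhibiting a nonzero triple bracket, and in (a) it invokes nontriviality of the holonomy group where you invoke compactness and simple connectedness.
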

\begin{proof}Our choice for (a) is possible according to Joyce
\cite[Sect.\,2.2--2.3]{joyce}, with the resulting Le\-vi-Ci\-vi\-ta connection
$\,\nabla$ that must be Ric\-ci-flat, and hence real-an\-a\-lyt\-ic
\cite{deturck-kazdan}. By (\ref{sat}) and Lemma~\ref{atmst}, 
$\,\nabla\hs$ is, even locally, the only tor\-sion-free connection with
$\,\nabla\nnh\my=0$. Having the holonomy group $\,G_2\w$ or 
$\,\mathrm{Spin}\hh(7)$, it is not flat either on $\,M\nh$, or
-- due to an\-a\-lyt\-ic\-i\-ty -- on any open sub\-man\-i\-fold.

For (b), our assertion is in turn obvious from the uniqueness assertion in the
final clause of Lemma~\ref{isotr} 
combined with (\ref{ifi}), since $\,\nabla$ in Lemma~\ref{isotr} is --
according to the lines preceding (\ref{gma}) -- 
the Le\-vi-Ci\-vi\-ta connection of the pseu\-\hbox{do\hs-} Riem\-ann\-i\-an
Ein\-stein metric $\,g\,$ (the Kil\-ling form), which is not flat
on any open sub\-man\-i\-fold, as it has,
by (\ref{gma}), the nonzero parallel Ric\-ci tensor $\,-g/4$.
\end{proof}
\begin{remark}\label{smplg}Any real simple Lie algebra $\,\mathfrak{g}\,$ is
either a real form of a complex simple Lie algebra $\,\mathfrak{h}$, or the
result of treating some such $\,\mathfrak{h}\,$ as real. See, e.g.,
\cite[Lemma 4 on p.\,173]{hausner-schwartz}. 
According to \cite[Theorem 4.1]{derdzinski-gal}, the curvature operator 
$\,R:[\mathfrak{g}^*]^{\odot2}\nnh\to[\mathfrak{g}^*]^{\odot2}$ in
Sect.\,\ref{ct} has the same nonzero eigen\-values as its analog for
$\,\mathfrak{h}$. It also behaves additively under the di\-rect-sum
operation applied to Lie algebras. This generalizes the final clause of
Lemma~\ref{isotr} and Theorem~\ref{injec}(b) to the case of arbitrary 
sem\-i\-simple Lie algebras without ideals of dimensions $\,3\,$ or $\,6$.
\end{remark}

\section{Logical independence in Theorems~\ref{tfsix} and
\ref{lcint}}\label{li}
\setcounter{equation}{0}
Closedness of an algebraically constant differential $\,\px\hs$-form $\,\my\,$
in dimension $\,n$ is known {\it not\/} to imply 
in\-te\-gra\-bi\-li\-ty of its  divisibility distribution $\,\dz\hh$ except
when $\,\px\in\{0,1,2,n-1,n\}\hn$: counterexamples in
\cite[Sect.\,12]{derdzinski-piccione-terek} 
realize all dimensions $\,n\ge5$ and all $\,\px\,$ with
$\,2<\px<n-1$.

It is thus natural to ask if other parts of items (iii) in
Theorem~\ref{tfsix} and (i), (ii) in Theorem~\ref{lcint} are similarly free
of redundancy. Theorem~\ref{tfsix} gives rise to three questions of this kind 
-- whether closedness of $\,\my\,$ implies any of (a), (b), (c) -- and 
Theorem~\ref{lcint} to one more: does (ii) follow from (i)?

This section answers the first three questions in the negative with the
aid of the following examples. They use $\,e_1\w,\dots,e_6\w$, dual to
$\,\xi^1\nnh,\dots,\hs\xi\hh^6$ as in
(\ref{smt}), chosen to form a basis of the Lie algebra 
of left-in\-var\-i\-ant vector fields on a Lie group, the only nonzero 
Lie brackets being those algebraically related to
\begin{equation}\label{rel}
\begin{array}{l}
[\hs e_1\w,e_2\w]=e_5\w\hh,\qquad[\hs e_1\w,e_3\w]=e_6\w\qquad\mathrm{\ for\
\ (a),}\\
{}[\hs e_2\w,e\hn_4\w]=e_1\w\hh,\qquad[\hs e_6\w,e_2\w]=e_3\w\qquad\mathrm{\ for\
\ (b),}\\
{}[\hs e_1\w,e_2\w]=e_6\w\hh,\qquad[\hs e_5\w,e\hn_4\w]=e_3\w\qquad\mathrm{\
for\ \ (c).} 
\end{array}
\end{equation}
The Ja\-co\-bi identity is obvious, since all brackets lie in the center. The
only nonzero components of $\,\my\,$ thus are, cf.\ (\ref{six}), up to obvious
consequences of skew-sym\-me\-try, 
$\,\my_{123}\w=\my_{345}\w=\my_{561}\w=\my_{246}\w=1\,$ for (a),
$\,\my_{123}\w=\my_{345}\w=\my_{561}\w=1\,$ for (b), 
$\,\my_{123}\w=\my_{456}\w=1\,$ for (c). 
By (\ref{dbr}), with the same convention, the only pos\-si\-bly-non\-ze\-ro
components of $\,\zeta=d\my\,$ are $\,\zeta_{i\hn jkl}\w$ such that 
$\,[\hs e\hn_i\w,e\nh_j\w]\ne0$, Explicitly, in case (a): 
$\,\zeta_{12\hh kl}\w=0\,$ and $\,\zeta_{13\hs i\hn j}\w=0$ for $\,kl$, or
$\,i\hn j$, ranging over $\,34,35,36,45,46,56\,$ or, respectively,
$\,45,46,56$. Similarly, for (b): $\,\zeta_{24kl}\w=0\,$ with 
$\,kl=13,15,16,35,36,56$, and $\,\zeta_{26\hh i\hn j}\w=0\,$ for $\,ij=13,15,35$.
Finally, in case (c), $\,\zeta_{12\hh kl}\w=0\,$ and
$\,\zeta_{45i\hn j}\w=0\,$ with $\,kl$, or 
$\,ij$, ranging over $\,34,35,36,45,46,56\,$ or, respectively,
$\,13,16,23,26,36$. Thus, $\,d\my=0\,$ in all cases, while (\ref{rel}) and
(\ref{spn}) show that neither $\,\hz$, for (b), nor either of $\,\hz^\pm\nh$,
for (c), is in\-te\-gra\-ble. Finally, the Nijen\-huis tensor $\,N\nh$ of
$\,J\,$ sends vector fields $\,v,w\,$ to 
$\,N(v,w)=J[J v,w]+J[v,J w]-[J v,J w]+[v,w]$, so that
$\,N(e_1\w,e_2\w)=e_5\w$ by (\ref{rel}) and (\ref{acs}), proving
non-in\-te\-gra\-bi\-li\-ty of $\,J\,$ in case (a).

For the remaining (fourth) redundancy question raised in the initial
paragraph of this section, a negative answer is provided by
Remark~\ref{redun} below.

\section{Duality and closedness}\label{dc}
\setcounter{equation}{0}
Given a nondegenerate 2-form $\,\sy\hs$ on an $\,n$-di\-men\-sion\-al manifold
$\,M\nh$, with $\,n=2m\ge2\,$ even, we use (\ref{dfd}) to define
its {\it dual\/ $\,(n-2)$-form\/} $\,\my$. Note that
\begin{equation}\label{cls}
\begin{array}{l}
\mathrm{if\ }\hs\hs\sy\hh\mathrm{\ is\ closed,\nnh\ or\ locally\ constant,\nnh\
or\,\ par}\hyp\\
\mathrm{allel,\ then\ so\ is,\ respectively,\ its\ dual\ }\,\,\my\hh,
\end{array}
\end{equation}
the locally-constant and \ig\ cases obvious as 
$\,\my\,$ and $\,\sy\hs$ arise from each other via explicit constructions
(Lemma~\ref{duali}). For the same reason, conversely,
\begin{equation}\label{cnv}
\mathrm{\igy\ or\ local\ constancy\ of\ the\ dual\ of\ 
}\,\sy\hs\mathrm{\ implies\ the\ same\ for\ }\,\sy\hh.
\end{equation}
Now (\ref{cls}) follows: closedness of $\,\sy\hs$ is equivalent to its local
constancy due to the Dar\-boux theorem, which gives
$\,\sy=\hs dx^1\nnh\wedge\hs dx^2\nh+\ldots+\hs dx\hh^{n-1}\nnh\wedge\hs dx^n$
in suitable local coordinates $\,x^1\nnh,\dots,x^n\nnh$. Thus, by (\ref{mds}),
the dual $\,\my\,$ of $\,\sy\hs$ has constant component functions in these 
coordinates, and is consequently closed.

The symbol $\,\widehat{\,\,}\,$ means `delete' in the following
theorem, which shows that, in contrast with (\ref{cnv}), the converse of
the ``closed'' case of (\ref{cls}) is generally
false in all (necessarily even) dimensions $\,n\ge6$. 
Note that for $\,n=2\,$ and $\,n=4\,$ the converse is true, since then
$\,\my=-\nnh1\,$ and $\,\my=-\hn\sy\hs$ by (\ref{led}).

The index ranges below are $\,i,j=1,\dots,m\,$ and $\,k=1,\dots,2m$.
\begin{theorem}\label{duncl}Given an integer\/ $\,m\ge2\,$ and positive
functions\/ $\,\phi\hn_i\w$ of
the\/ $\,2m$ real variables\/ $\,x^k\nh$, let\/ 
$\,\chi_i\w=\hs dx^{2i-1}\nnh\wedge dx^{2i}$ and\/  
$\,\sy\nnh_i\w=\phi_{\nh i}^{1-m}\phi\hh\chi_i\w$, where\/ 
$\,\phi=\phi\hn_1\w\ldots\phi\hn_m\w$. The\/ $\,2$-form\/ $\,\sy\hs$
and\/ $\,(2m-2)$-form $\,\,\,\zeta\,$ defined by\/ 
$\,\sy=\sy\nh_1\w+\ldots+\sy\nnh_m\w$ 
and\/ $\,\zeta=\zeta_1\w\nh+\ldots+\hs\zeta_m\w$, with\/ 
$\,\zeta_i\w
=-\sy\nh_1\w\wedge\ldots\wedge\widehat{\sy}\nnh_i\w\wedge\ldots
\wedge\sy\nnh_m\w$, then have the following properties.
\begin{enumerate}
\item[(a)] $\zeta\,$ is algebraically constant and in\-di\-vis\-i\-ble.
\item[(b)] $\sy\,$ and\/ $\,\zeta\hs$ are dual to each other.
\item[(c)] $\zeta\hs$ is closed if and only if, with no summation, 
$\,\partial\hn_{2i-1}\w\phi\hn_i\w=\partial\hn_{2i}\w\phi\hn_i\w=0$.
\item[(d)] $\sy\,$ is closed if and only if\/
$\,\partial\nh_k\w[\hh\phi_{\nh i}^{1-m}\phi]=0\,$ whenever\/
$\,k\notin\{2i-1,2i\}$, which is in turn equivalent to having\/
$\,\phi\hn_i\w=\hs\rho_1\w\nnh\ldots\hs\widehat\rho_i\w\nh\ldots\hs\rho_m\w$
for all\/ $\,i$, where each\/ $\,\rho\hn_j\w$ is a function of\/
$\,x^{2j-1}\nnh\nh$ and $\,x^{2j}\nh$.
\end{enumerate}
\end{theorem}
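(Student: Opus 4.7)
The plan starts by evaluating $\zeta_i$ in closed form: using $\prod_{j\ne i}\phi_j^{1-m}=\phi^{1-m}\phi_i^{m-1}$, the cancellations in $\prod_{j\ne i}\sy_j$ give
\[
\zeta_i \,=\, -\phi_i^{m-1}\,\chi_1\wedge\cdots\widehat\chi_i\cdots\wedge\chi_m.
\]
For (a) and (b), observe that at each point $x$, since $\phi_i(x)>0$, we may rescale $dx^{2i-1},dx^{2i}$ by $\sqrt{\phi_i^{1-m}\phi}$ to obtain $1$-forms $\xi^{2i-1},\xi^{2i}$ in which $\sy_x=\sum_i\xi^{2i-1}\wedge\xi^{2i}$. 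Formula~(\ref{mds}) then identifies $\zeta_x$ pointwise as the dual of the nondegenerate $2$-form $\sy_x$ in the sense of (\ref{dfd}). Algebraic constancy and indivisibility of $\zeta$ follow from Lemma~\ref{duali}.

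For (c), I expand
\[
d\zeta \,=\, -(m-1)\sum_i \phi_i^{m-2}\, d\phi_i \wedge \chi_1\wedge\cdots\widehat\chi_i\cdots\wedge\chi_m.
\]
Only the $dx^{2i-1}$ and $dx^{2i}$ summands of $d\phi_i$ survive the wedge, and the resulting $(2m-1)$-forms $dx^1\wedge\cdots\widehat{dx^l}\cdots\wedge dx^{2m}$ (for $l=1,\dots,2m$) are linearly independent. Since $m\ge2$ makes $m-1\ne0$, closedness of $\zeta$ is equivalent to $\partial_{2i-1}\phi_i=\partial_{2i}\phi_i=0$ for every $i$.

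For the first equivalence in (d), a direct computation gives
\[
d\sy \,=\, \sum_{i}\sum_{k\notin\{2i-1,2i\}}\partial_k(\phi_i^{1-m}\phi)\,dx^k\wedge dx^{2i-1}\wedge dx^{2i}.
\]
The key combinatorial observation is that for any pair $(i_0,k_0)$ with $k_0\notin\{2i_0-1,2i_0\}$, the only $2$-subset of $\{k_0,2i_0-1,2i_0\}$ of the form $\{2j-1,2j\}$ is $\{2i_0-1,2i_0\}$ itself: a parity/consecutiveness check rules out $\{k_0,2i_0-1\}$ and $\{k_0,2i_0\}$. So $(i_0,k_0)$ is the unique contributor to the coefficient of $dx^{k_0}\wedge dx^{2i_0-1}\wedge dx^{2i_0}$, yielding the stated equivalence. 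For the second equivalence, write $u_j=\log\phi_j$. The vanishing condition rearranges, for each $i$ and each $k\notin\{2i-1,2i\}$, to $(m-1)\partial_k u_i=\partial_k\sum_j u_j$; hence $\partial_k u_i$ takes a common value $c_k$ for every $i\ne j(k)$ with $j(k):=\lceil k/2\rceil$, and $\partial_k u_{j(k)}=0$. A short case-check using equality of mixed partials (handling $j(k)=j(l)$, $j(k)\ne j(l)$, and selecting $i\ne j(k),j(l)$ when $m\ge3$) shows the $1$-form $\sum_k c_k\,dx^k$ is closed; integrating locally to a primitive $V$ and setting $v_j:=V-u_j$, the partial-derivative conditions force each $v_j$ to depend only on $x^{2j-1},x^{2j}$, while $u_i=\sum_{j\ne i}v_j$, so $\phi_i=\prod_{j\ne i}\rho_j$ with $\rho_j=e^{v_j}$. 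The converse is a direct check.

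The main technical obstacle is the combinatorial uniqueness step in (d): a priori the same three-form $dx^a\wedge dx^b\wedge dx^c$ could arise from several $(i,k)$ choices, but the rigid parity structure of the pairs $\{2i-1,2i\}$ prevents this. The Poincar\'e-lemma step is routine once closedness of $\sum_k c_k\,dx^k$ has been verified.
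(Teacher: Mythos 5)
Your proposal is correct, and for (a), (b), (c) and the first equivalence in (d) it follows essentially the same route as the paper: the pointwise rescaling $\xi^k=[\phi_i^{1-m}\phi]^{1/2}dx^k$ together with (\ref{mds}) and Lemma~\ref{duali} for (a)--(b), the identity $\zeta_i=-\phi_i^{m-1}\chi_1\wedge\ldots\wedge\widehat\chi_i\wedge\ldots\wedge\chi_m$ and linear independence of the surviving $(2m-1)$-forms for (c), and linear independence of the $3$-forms $dx^k\wedge dx^{2i-1}\wedge dx^{2i}$ for (d); your explicit parity check that distinct pairs $(i,k)$ give distinct index sets is exactly what justifies the independence the paper asserts without comment. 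The genuine divergence is in the second equivalence of (d). You pass to $u_j=\log\phi_j$, extract the constants $c_k=\partial_k u_i$ ($i\ne\lceil k/2\rceil$), verify via mixed partials that $\sum_k c_k\,dx^k$ is closed, and integrate; this works, but note that your primitive $V$ must be normalized so that $(m-1)V=\sum_j u_j$ exactly (a priori $u_i-\sum_{j\ne i}v_j$ is only a constant, which you absorb by shifting $V$), and the Poincar\'e-lemma step is local, so on a non-simply-con\-nect\-ed domain the factorization would only be local. The paper instead observes the purely algebraic identity: $\phi_i=\rho_1\ldots\widehat\rho_i\ldots\rho_m$ for all $i$ if and only if $\phi_i^{1-m}\phi=\rho_i^{m-1}$, so one simply sets $\rho_i=[\phi_i^{1-m}\phi]^{1/(m-1)}$, which is globally defined and, by the first part of (d), depends only on $x^{2i-1}\nh,x^{2i}$. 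The algebraic route is shorter, requires no integration or closedness check, and yields the factorization globally; your route buys nothing extra here, but it is sound once the normalization of $V$ is made explicit.
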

\begin{proof}
Algebraic constancy of $\,\zeta\hh$ follows as 
$\,\sy\nnh_i\w=\xi^{2i-1}\nnh\wedge\hs\xi^{2i}$ with 
$\,\xi^k\nh=[\hh\phi_{\nh i}^{1-m}\phi]^{1/2}dx^k$ for $\,k\in\{2i-1,2i\}$. 
Now (\ref{mds}) implies (b), and hence (a).

Next, for 
$\,\theta\hn_k\w=
dx^1\nnh\wedge\ldots\wedge\hs dx^{k-1}\nnh\wedge\hs dx^{k+1}\nnh\wedge
\ldots\wedge\hs dx^{2m}\nh$, the obvious relation 
$\,\zeta_i\w
=-\phi_{\nh i}^{m\hs-1}\chi\hn_1\w\wedge\ldots\wedge\widehat{\chi}_i\w
\wedge\ldots\wedge\chi\hn_m\w$ implies that 
$\,d\hh\zeta_i\w$ equals the combination of $\,\theta\hn_{2i}\w$ and
$\,\theta\hn_{2i-1}\w$ with the coefficients 
$\,-\hs\partial\hn_{2i-1}\w[\hh\phi_{\nh i}^{m\hs-1}]\,$ and 
$\,-\hs\partial\hn_{2i}\w[\hh\phi_{\nh i}^{m\hs-1}]$. As 
$\,\zeta=\zeta_1\w+\ldots+\zeta_m\w$, linear independence of 
the $\,(2m-1)$-forms $\,\theta\hn_1\w\nnh,\dots,\theta\hn_{2m}\w$ yields (c). 
Similarly, $\,d\sy\nnh_i\w$ is the combination of
$\,dx^k\nnh\wedge dx^{2i-1}\nnh\wedge dx^{2i}\nh$, over 
$\,k\notin\{2i-1,2i\}$, with the coefficients 
$\,\partial\nh_k\w[\hh\phi_{\nh i}^{1-m}\phi]$. Linear independence of all
such $\,dx^k\nnh\wedge dx^{2i-1}\nnh\wedge dx^{2i}$ now proves the first
claim in (d). Since, for purely algebraic reasons, 
$\,\phi\hn_i\w=\hs\rho_1\w\nnh\ldots\hs\widehat\rho_i\w\nh\ldots\hs\rho_m\w$ 
for all $\,i\,$ if and only if $\,\phi_{\nh i}^{1-m}\phi=\rho_i^{m-1}\nh$,
the second part of (d) follows.
\end{proof}
\begin{remark}\label{nocnv}Starting from $\,m=3$, Theorem~\ref{duncl} yields
examples in which $\,\zeta\hs$ is closed, but its dual $\,\sy\hs$ is not: 
we may clearly choose $\,\phi\hn_i\w$ as in (c) that do not have separated
variables in the sense of (d).
\end{remark}
\begin{remark}\label{redun}For a nonzero differential $\,(n-2)$-form
on an $\,n$-di\-men\-sion\-al manifold $\,M\nnh$, 
satisfying (\ref{cst}-b) and having the divisibility 
distribution $\,\dz\,$ of fibre dimension\/ $\,q$, so that $\,q\,$ is even 
and $\,2\le q\le n\,$ due to (\ref{qis}), closedness of $\,\my\,$
and in\-te\-gra\-bi\-li\-ty of $\,\dz\,$ do not imply closedness, along
the leaves of $\,\dz$, of the $\,2$-form $\,\sy$ dual to the
in\-di\-vis\-i\-ble factor $\,\zeta\,$ of $\,\my$. Examples realizing all
$\,n,q\,$ with even $\,q$ and $\,n\ge q\ge6\,$ arise from 
Remark~\ref{prdct} applied to a manifold $\,\varSigma\,$ of any dimension
$\,s\ge0\,$ and an open sub\-man\-i\-fold $\,\varPi\hs$ of $\,\bbR\nh^{2m}\nh$,
$\,m\ge3$, with the $\,(2m-2)$-form $\,\zeta$ of Theorem~\ref{duncl}
chosen so as to be closed without closedness of its dual $\,2$-form $\,\sy$
(see Remark~\ref{nocnv}). Here $\,n=2m+s\,$ and $\,q=2m$.
\end{remark}


\end{document}